\theoremstyle{plain}
\newtheorem{theorem}{Theorem}
\newtheorem{proposition}[theorem]{Proposition}
\newtheorem{lemma}[theorem]{Lemma}
\newtheorem{corollary}[theorem]{Corollary}
\newtheorem{assumption}[theorem]{Assumption}
\theoremstyle{remark}
\newtheorem{remark}[theorem]{Remark}
\newcommand{\R}{\mathbb{R}}
\newcommand{\dd}{\, \text{d}}
\begin{document}

\title{Optimality Conditions in Variational Form for Non-Linear Constrained Stochastic Control Problems}
\author{Laurent Pfeiffer\footnote{Institute of Mathematics, University of Graz, Austria. E-mail: laurent.pfeiffer@uni-graz.at}}
\date{\today}

\maketitle

\begin{abstract}
Optimality conditions in the form of a variational inequality are proved for a class of constrained optimal control problems of stochastic differential equations. The cost function and the inequality constraints are functions of the probability distribution of the state variable at the final time.
The analysis uses in an essential manner a convexity property of the set of reachable probability distributions.
An augmented Lagrangian method based on the obtained optimality conditions is proposed and analyzed for solving iteratively the problem. At each iteration of the method, a standard stochastic optimal control problem is solved by dynamic programming. Two academical examples are investigated.
\end{abstract}

\paragraph{Keywords:} stochastic optimal control, optimality conditions, augmented Lagrangian method, mean-field-type control, relaxation.

\paragraph{AMS classification:} 90C15, 93E20, 49J53, 49K99.

\section{Introduction}

This article is devoted to the derivation of optimality conditions for a class of control problems of stochastic differential equations (SDE). For a given adapted control process $u$, let $(X_t^{0,Y_0,u})_{t \in [0,T]}$ be the solution to the following SDE:
\begin{equation*}
\begin{cases}
\begin{array}{rl}
\dd X_t^{0,Y_0,u}= & b(X_t^{0,Y_0,u},u_t) \dd t + \sigma(X_t^{0,Y_0,u},u_t) \dd W_t, \quad \text{for a.\,e.\@ $t \in [0,T]$,} \\
\quad X_0^{0,Y_0,u}= & Y_0,
\end{array}
\end{cases}
\end{equation*}
where the drift $b$, the volatility $\sigma$, and the initial condition $Y_0$ are given. For all $t \in [0,T]$, we denote by $m_t^{0,Y_0,u}$ the probability distribution of $X_t^{0,Y_0,u}$.
Along the article, constrained problems of the following form are considered:
\begin{equation} \label{eqAbstractPb}
\inf_{u \in \mathcal{U}_0(Y_0)} F(m_T^{0,Y_0,u}), \quad \text{subject to: } G(m_T^{0,Y_0,u}) \leq 0,
\end{equation}
where the mappings $F$ and $G$ are given and satisfy differentiability assumptions. The set $\mathcal{U}_0(Y_0)$ is a set of adapted stochastic processes. A precise description of problem \eqref{eqAbstractPb} will be given in Section \ref{sectionFormulation}.
In this paper, we call the mappings $F$ and $G$ linear if they can be written in the form $F(m)= \int_{\R^n} f(x) \dd m(x)$ and $G(m)= \int_{\R^n} g(x) \dd m(x)$, where $f:\R^n \rightarrow \R$ and $g:\R^n \rightarrow \R^N$. In this specific case, problem \eqref{eqAbstractPb} is equivalent to the following stochastic optimal control problem with an expectation constraint:
\begin{equation} \label{eqPbExpCont}
\inf_{u \in \mathcal{U}_0(Y_0)} \mathbb{E} \big[ f \big(X_T^{0,Y_0,u} \big) \big], \quad
\text{subject to: } \mathbb{E} \big[ g \big( X_T^{0,Y_0,u} \big) \big] \leq 0.
\end{equation}
The terminology \emph{non-linear} used in the title refers to the fact that the functions $F$ and $G$ for which our result applies are not necessarily linear. In other words, the cost function and the constraints are not necessarily formulated as expectations of functions of $X_T^{0,Y_0,u}$. This is the specificity of the present article.

Stochastic optimal control problems with a non-linear cost function are mainly motivated by applications in economy and finance. In many situations, minimizing the expectation of a random cost may be unsatisfactory and one may prefer to take into account the ``risk" induced by the dispersion of the cost. In the literature, there exist many different models of risk and some of them can be formulated as functions of the probability distribution of the state variable, as explained for example in \cite[Chapter 6]{SDR14}. Some portfolio problems with risk-averse cost functions are studied in \cite{Pfe16}. In \cite[Section 5]{AD11}, a mean-variance portfolio selection problem is considered as well as in \cite{PG17}. In \cite{MY17}, a gradient-based method is developed for minimization problems of the conditional value at risk, a popular risk-averse cost function.
The risk can also be taken into account by considering a constraint of the form $G(m_T^{0,Y_0,u}) \leq 0$. For example, one can try to keep the probability of bankruptcy under a given threshold with a probability constraint. If $G$ models the variance of the outcome of some industrial process, then a constraint on the variance can guarantee some uniformity in the outcome, which can be a desirable property. Final-time constraints have several applications in finance, see for example \cite{Tou00}, where probability constraints are used for solving a super-replication problem or \cite{TT13}, where the final probability distribution is fixed for solving a determination problem of no-arbitrage bounds of exotic options.
Finally, let us mention that a problem of the form \eqref{eqAbstractPb} can be seen as a simple model for an optimization problem of a multi-agent system, where a government, that is to say a centralized controller, influences a (very large) population of agents, whose behavior is described by a SDE (see for example \cite{ACFK16} and the references therein on this topic). Let us mention however that for general multi-agent models, the coefficients of the SDE at time $t$ depend on the current probability distribution $m_t^{0,Y_0,u}$.

Let us describe the available results in the literature related to optimality conditions for problems similar to Problem \eqref{eqStandardPb}.
Problem \eqref{eqStandardPb}, without constraints, is a specific case of an optimal control problem of a McKean-Vlasov process (or mean-field-type control problem). For this more general class of problems, the drift and volatility of the SDE possibly depend at any time $t$ on the current distribution $m_t^{0,Y_0,u}$. Optimality conditions usually take the form of a stochastic maximum principle: for a solution $\bar{u}$, $\bar{u}_t$ minimizes almost surely and for almost every time $t$ a Hamiltonian involving a costate which is obtained as a solution to a backward stochastic differential equation (see for example \cite{AD11}, \cite{BFY13}, \cite{BDL11}, \cite{CD15}).
In a different but related approach, one can consider control processes in a feedback form, that is to say in the form $u_t= \mathbf{u}(t,X_t^{0,Y_0,u})$, where the mapping $\mathbf{u} \colon [0,T] \times \R^n \rightarrow U$ has to be optimized. Under regularity assumptions, the probability distribution $m_t^{0,Y_0,u}$ has a density, say $\mu(t,x)$, which is the solution to the Fokker-Planck equation:
\begin{equation*}
\partial_t \mu= \frac{1}{2} \nabla^2 :\big[ \mu(t,x) a \big( x,\mathbf{u}(t,x) \big) \big] - \nabla \cdot  \big[ \mu(t,x) b \big( x,\mathbf{u}(t,x) \big)  \big],
\end{equation*}
where $a= \sigma \sigma^\top$ and where the operators $\nabla \cdot$ and $\nabla^2 :$ are defined by
\begin{equation*}
\nabla \cdot f(x)= \sum_{i=1}^n \partial_{x_i}f_i(x) \quad
\text{and} \quad \nabla^2 : g(x)= \sum_{i,j=1}^n \partial_{x_ix_j}^2 g_{ij}(x),
\end{equation*}
respectively. A derivation of the Fokker-Planck can be found in \cite[Lemma 3.3]{Car12}, for example.
Note that if $b$ and $\sigma$ do not depend on $\mu$, then the Fokker-Planck equation is a linear partial differential equation.
In this approach, the problem is an optimal control problem of the Fokker-Planck equation and optimality conditions take the form of a Pontryagin's maximum principle. The adjoint equation is in this setting an HJB equation. We refer the reader to \cite{AL15}, \cite{ACFK16}, \cite{BFY13}, and \cite{FG17} for this approach.

In this article, optimal control problems of the following form:
\begin{equation} \label{eqStandardPb}
\inf_{u \in U_0(Y_0)} \mathbb{E}\big[ \phi(X_T^{0,Y_0,u}) \big]
\end{equation}
are called \emph{standard problems}, considering the fact that they have been extensively studied in the last decades. They can be solved by dynamic programming, by computing the solution to the associated Hamilton-Jacobi-Bellman equation (see the textbooks \cite{FS06}, \cite{Pha09}, \cite{YZ99} on this field).
Since standard problems are of form \eqref{eqPbExpCont} (without constraints),
they fall into the general class of problems investigated in the paper.
The optimality conditions provided in the present article can be shortly formulated as follows: if $\bar{u}$ is a solution to \eqref{eqAbstractPb} and satisfies a qualification condition, then it is also the solution to a standard problem of the form \eqref{eqStandardPb}, where the involved function $\phi$ is the derivative at $m_T^{0,Y_0,u}$ (in a specific sense) of the Lagrangian of the problem $L:= F + \langle \lambda, G \rangle$, for some non-negative Lagrange multiplier $\lambda$ satisfying a complementarity condition. Our optimality conditions therefore take the form of a variational inequality. Our analysis relies on the following technical result: the closure (for the Wasserstein distance associated with the $L^1$-distance) of the set of reachable probability distributions at time $T$ is convex. This property is proved by constructing controls imitating the behaviour of relaxed controls.
To the best of our knowledge, the optimality conditions for problem \eqref{eqPb} are new, as well as the convexity property satisfied by the reachable set\footnote{A proof of the convexity property as well as optimality conditions in variational form for unconstrained problems can be found in the unpublished research report \cite{Pfe15a}.}. They differ from the maximum principle mentioned above and are, to a certain extent, related to the optimality conditions obtained for mean-field-type control problems formulated with feedback laws. The presence of non-linear constraints is another novelty of the article; in the literature, most constraints of the form $G(m_T^{0,Y_0,u}) \leq 0$ are expectation constraints, see for example \cite{BET09}, \cite{Pfe15}. The existence of a Lagrange multiplier, even in a linear setting, is not often considered, see \cite[Section 5]{BS12} or \cite[Section 6]{P90}.

It is well-known that problems of the form \eqref{eqAbstractPb} are time-inconsistent, i.e.\@ it is (in general) not possible to write a dynamic programming principle by parameterizing problem \eqref{eqAbstractPb} by its initial time and initial condition, as is customary for standard problems of the form \eqref{eqStandardPb}. A dynamic programming principle can be written if one considers the whole initial probability distribution as a state variable, see  \cite{LP14}, \cite{PW16}, \cite{PW15}. However, in practice, this approach does not allow, in general, to solve the problem, because the complexity of the method grows exponentially with the dimension of the (discretized) space of probability distributions.
The optimality conditions in variational form and the convexity property proved in this article naturally lead to iterative methods for solving problem \eqref{eqAbstractPb}, based on successive resolutions of standard problems and thus overcoming the difficulty related to time-inconsistency. We propose, analyse, and test such a method in the article. The cost function of the standard problem to be solved at each iteration is the derivative, in a certain sense, of an augmented Lagrangian.

We give a precise formulation of the problem under study in Section \ref{sectionFormulation}. We also discuss the notion of differentiability which is used.
In Section \ref{sectionConvexity}, we prove the convexity of the closure of the reachable set of probability distributions.
Optimality conditions in variational form are proved in Section \ref{sectionOptimality}. The case of convex problems is discussed.
Our numerical method for solving the problem is described and analyzed in Section \ref{sectionNumerics}.
We provide results for two academical examples.
Elements on optimal transportation theory are given in the appendix.

\section{Formulation of the problem and assumptions} \label{sectionFormulation}

\subsection{Notation}

\begin{itemize}
\item The set of probability measures on $\R^n$ is denoted by $\mathcal{P}(\R^n)$. For a function $\phi:\R^n \rightarrow \R$, its integral (if well-defined) with respect to the measure $m \in \mathcal{P}(\R^n)$ is denoted by
\begin{equation*}
\int_{\R^n} \phi(x) \dd m(x) \quad \text{or} \quad
\int_{\R^n} \phi \dd m.
\end{equation*}
Given two measures $m_1$ and $m_2 \in \mathcal{P}(\R^n)$, we denote:
\begin{equation*}
\int_{\R^n} \phi(x) \dd (m_2(x)- m_1(x)) := \int_{\R^n} \! \phi(x) \dd m_2(x) - \int_{\R^n} \! \phi(x) \dd m_1(x).
\end{equation*}
\item For a given random variable $X$ with values in $\R^n$, its probability distribution is denoted by $\mathcal{L}(X) \in \mathcal{P}(\R^n)$. If $m= \mathcal{L}(X) \in \mathcal{P}(\R^n)$, then for any continuous and bounded function $\phi:\R^n \rightarrow \R$,
\begin{equation*}
\mathbb{E}\big[ \phi(X) \big]= \int_{\R^n} \phi \dd m.
\end{equation*}
We also denote by $\sigma(X)$ the $\sigma$-algebra generated by $X$.
\item For a given vector $x \in \R^q$, we denote by $| x |$ its Euclidean norm and by $| x |_\infty$ its supremum norm.
\item For $p \geq 1$, we denote by $\mathcal{P}_p(\R^n)$ the set of probability measures having a finite $p$-th moment:
\begin{equation*}
\mathcal{P}_p(\R^n) := \Big\{
m \in \mathcal{P}(\R^n) \,\big|\, \int_{\R^n} | x |^p \dd m(x) < + \infty
\Big\}.
\end{equation*}
We recall that for $1 \leq p \leq q$, the space $\mathcal{P}_q(\R^n)$ is included into $\mathcal{P}_p(\R^n)$.
We equip $\mathcal{P}_1(\R^n)$ with the Wasserstein distance $d_1$ (the definition is given in the appendix). We recall the dual representation of $d_1$ \cite[Remark 6.5]{Vil09}: for all $m_1$, $m_2 \in \mathcal{P}_1(\R^n)$,
\begin{equation} \label{eqDualWasserstein}
d_1(m_1,m_2)= \sup_{\phi \in \text{1-Lip}(\R^n)} \int_{\R^n} \phi \dd(m_2- m_1),
\end{equation}
where $\text{1-Lip}(\R^n)$ is the set of real-valued Lipschitz continuous functions of modulus 1.
\item For all $R\geq 0$, we define
\begin{equation} \label{eqDefBpR}
\bar{B}_p(R):= \Big\{ m \in \mathcal{P}_p(\R^n) \,|\, \int_{\R^n} | x |^p \dd m(x) \leq R \Big\}.
\end{equation}
\item The open (resp.\@ closed) ball in $\R^n$ of radius $r \geq 0$ and center $0$ is denoted by $B_r$ (resp.\@ $\bar{B}_r$), its complement $B_r^\text{c}$ (resp.\@ $\bar{B}_r^\text{c}$), for the Euclidean norm.
\item For a given $p \geq 1$, we say that a function $\phi:\R^n \rightarrow \R^q$ is dominated by $|x|^p$ if for all $\varepsilon>0$, there exists $r>0$ such that for all $x \in B_r^{\text{c}}$,
\begin{equation} \label{eqContPropR}
| \phi(x) | \leq \varepsilon | x |^p.
\end{equation}
\item The convex envelope of a set $\mathcal{R}$ is denoted $\text{conv}(\mathcal{R})$. When $\mathcal{R}$ is a subset of $\mathcal{P}_1(\R^n)$, its closure for the $d_1$-distance is denoted $\text{cl}(\mathcal{R})$.
\end{itemize}

\subsection{State equation}

We fix a final time $T>0$ and a Brownian motion $(W_t)_{t \in [0,T]}$ of dimension $d$.
For all $0 \leq t \leq T$, $(W_s-W_t)_{s \in [t,T]}$ is a standard Brownian motion. For all $s \in [t,T]$, we denote  by $\mathcal{F}_{t,s}$ the $\sigma$-algebra generated by $(W_\theta-W_t)_{\theta \in [t,s]}$. 

Let $U$ be a compact subset of $\R^k$.  Note that we do not make any other assumption on $U$: it can be non-convex, for example, or can be a discrete set. For a given random variable $Y_t$ independent of $\mathcal{F}_{t,T}$ with values in $\R^n$, we define the sets $\mathcal{U}_t^0$ and $\mathcal{U}_t(Y_t)$ as the sets of control processes $(u_s)_{s \in [t,T]}$ taking values in $U$ such that for all $s \in [t,T]$, $u_s$ is respectively $\mathcal{F}_{t,s}$-measurable and $(\sigma(Y_t)\times \mathcal{F}_{t,s})$-measurable, respectively.

The drift $b:\R^n \times U \rightarrow \R^n$ and the volatility $\sigma:\R^n \times U \rightarrow \R^{n \times d}$ are given. For all $u \in \mathcal{U}_t(Y_t)$, we denote by $\big( X_s^{t,Y_t,u} \big)_{s \in [t,T]}$ the solution to the SDE
\begin{equation} \label{eqGenSDE}
\dd X_s^{t,Y_t,u}= b(X_s^{t,Y_t,u},u_s) \dd s + \sigma(X_s^{t,Y_t,u},u_s) \dd W_s, \quad \forall s \in [t,T], \quad X_t^{t,Y_t,u}= Y_t.
\end{equation}
The well-posedness of this SDE is ensured by Assumption \ref{hypLipschitzCoeff} \cite[Section 5]{Oks03} below. We also denote by $m_s^{t,Y_t,u}$ the probability distribution of $X_s^{t,Y_t,u}$:
\begin{equation*}
m_s^{t,Y_t,u}= \mathcal{L}(X_s^{t,Y_t,u}).
\end{equation*}

All along the article, we assume that the following assumption holds true. From now on, the initial condition $Y_0$ and the real number $p \geq 2$ introduced below are fixed.

\begin{assumption} \label{hypLipschitzCoeff} There exists $K>0$ such that for all $x,y \in \R^n$, for all $u, v \in U$,
\begin{align*}
& | b(x,u) | + | \sigma(x,u) | \leq K(1+ | x | + | u |), \\
& | b(x,u) - b(y,v) | + | \sigma(x,u)-\sigma(y,v) |  \leq K( | y-x | + | v-u |).
\end{align*}
There exists $p \geq 2$ such that $\mathcal{L}(Y_0) \in \mathcal{P}_p(\R^n)$.
\end{assumption}

The following lemma is classical, see for example \cite[Section 2.5]{Kry80}.

\begin{lemma} \label{lemmaLipschitzEstimates}
There exist three constants $C_1$, $C_2$, and $C_3>0$ depending on $K$, $T$, $U$, and $p$ such that for all $t \in [0,T]$, for all random variables $Y_t$ and $\tilde{Y}_t$ independent of $\mathcal{F}_{t,T}$ and taking values in $\R^n$, for all $u \in \mathcal{U}_t(Y_t)$, for all $0 \leq h \leq T-t$, for all $t \leq s \leq T-h$,
the following estimates hold:
\begin{enumerate}
\item $\ \mathbb{E} \Big[ \sup_{t \leq \theta \leq T} \big| X_\theta^{t,Y_t,u} \big|^p \Big] \leq C_1 \big( 1+ \mathbb{E}\big[ |Y_t|^p \big] \big)$
\item $\ \mathbb{E} \Big[ \sup_{s \leq \theta \leq s+h} |X_\theta^{t,Y_t,u} - X_s^{t,Y_t,u}|^p \Big] \leq C_2 h^{p/2} \big(1+ \mathbb{E} \big[ |Y_t|^p \big] \big)$
\item $\ \mathbb{E} \Big[ \sup_{t \leq \theta \leq T} \big| X_\theta^{t,\tilde{Y}_t,u} - X_\theta^{t,Y_t,u} \big|^p \Big] \leq C_3 \mathbb{E} \big[ |\tilde{Y}_t-Y_t|^p \big]$.
\end{enumerate}
\end{lemma}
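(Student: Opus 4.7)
The three estimates are classical $L^p$-SDE bounds, and I would prove them in order, each reducing to an application of the Burkholder--Davis--Gundy (BDG) inequality together with Gr\"onwall's lemma. Throughout, I would exploit that $U$ is compact, so that $|u_s| \leq M$ for some constant $M$ depending only on $U$, which turns the linear growth bound in Assumption \ref{hypLipschitzCoeff} into $|b(x,u)| + |\sigma(x,u)| \leq K'(1+|x|)$ for a constant $K'$ depending on $K$ and $U$. All constants below are then allowed to depend on $K$, $T$, $U$, $p$.

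For part (1), I would start from the integral form
\begin{equation*}
X_\theta^{t,Y_t,u} = Y_t + \int_t^\theta b(X_r^{t,Y_t,u},u_r) \dd r + \int_t^\theta \sigma(X_r^{t,Y_t,u},u_r) \dd W_r,
\end{equation*}
raise to the $p$-th power, and take $\sup_{\theta \leq s}$ for $s \in [t,T]$. Using the elementary inequality $|a+b+c|^p \leq 3^{p-1}(|a|^p+|b|^p+|c|^p)$, H\"older on the time integral (bounding it by $(s-t)^{p-1}\int |b|^p$), and the BDG inequality on the stochastic integral (bounded by a constant times $\mathbb{E}[(\int_t^s |\sigma|^2 \dd r)^{p/2}]$, itself controlled by $(s-t)^{p/2-1}\int \mathbb{E}|\sigma|^p$ via H\"older since $p \geq 2$), the linear growth gives
\begin{equation*}
\mathbb{E}\Big[\sup_{t \leq \theta \leq s} |X_\theta^{t,Y_t,u}|^p\Big] \leq A\big(1 + \mathbb{E}|Y_t|^p\big) + B \int_t^s \mathbb{E}\Big[\sup_{t \leq r \leq \tau}|X_r^{t,Y_t,u}|^p\Big] \dd \tau,
\end{equation*}
and Gr\"onwall's lemma closes the estimate with $C_1 = A \exp(BT)$.

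For part (2), I would apply the same argument but on the interval $[s, s+h]$, writing $X_\theta - X_s = \int_s^\theta b \dd r + \int_s^\theta \sigma \dd W_r$. The drift term contributes $h^{p-1}\int_s^{s+h} \mathbb{E}|b|^p \dd r \leq C h^p (1+\mathbb{E}[\sup|X|^p])$ and BDG plus H\"older yield $C h^{p/2}(1+\mathbb{E}[\sup|X|^p])$ for the diffusion. Since $h \leq T$, the $h^p$ term is dominated by the $h^{p/2}$ term. Plugging in the bound from part (1) gives the desired $C_2 h^{p/2}(1+\mathbb{E}|Y_t|^p)$.

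For part (3), I would write the difference $\Delta_\theta := X_\theta^{t,\tilde Y_t,u} - X_\theta^{t,Y_t,u}$, which satisfies $\Delta_\theta = (\tilde Y_t - Y_t) + \int_t^\theta [b(X^{t,\tilde Y_t,u}_r,u_r)-b(X^{t,Y_t,u}_r,u_r)]\dd r + \int_t^\theta [\sigma \text{ difference}] \dd W_r$. Here the $u_r$ terms cancel out, which is crucial and why the Lipschitz estimate survives. Applying the Lipschitz bound $|b(x,u)-b(y,u)|+|\sigma(x,u)-\sigma(y,u)| \leq K|y-x|$, then the same $|\cdot|^p$ / H\"older / BDG pipeline as in (1), gives
\begin{equation*}
\mathbb{E}\Big[\sup_{t \leq \theta \leq s}|\Delta_\theta|^p\Big] \leq A' \mathbb{E}|\tilde Y_t - Y_t|^p + B' \int_t^s \mathbb{E}\Big[\sup_{t \leq r \leq \tau}|\Delta_r|^p\Big] \dd \tau,
\end{equation*}
and Gr\"onwall gives $C_3 = A' \exp(B' T)$. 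The only mild subtlety is ensuring the $\sup$ on the left can be introduced in the Gr\"onwall step; this follows from the monotonicity of $\tau \mapsto \mathbb{E}[\sup_{r \leq \tau}|\Delta_r|^p]$ and Doob's inequality packaged inside BDG. Since the paper cites \cite[Section 2.5]{Kry80}, I would state the result without further detail.
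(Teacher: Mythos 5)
Your proof is correct and follows the standard BDG--H\"older--Gr\"onwall argument; the paper itself gives no proof and simply defers to \cite[Section 2.5]{Kry80}, which establishes these moment estimates by essentially the same reasoning. The only routine technicality you leave implicit is the a priori finiteness of $\mathbb{E}\big[\sup_{t \leq \theta \leq s}|X_\theta^{t,Y_t,u}|^p\big]$ needed to invoke Gr\"onwall (handled by localization or by working with Picard iterates), which is standard and acceptable to omit here.
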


We denote by $\mathcal{R}$ the set of reachable probability distributions at time $T$, defined by
\begin{equation} \label{eqDefR}
\mathcal{R}= \{ m_T^{0,Y_0,u} \,|\, u \in \mathcal{U}_0(Y_0) \}.
\end{equation}
By Lemma \ref{lemmaLipschitzEstimates}, there exists $R>0$ such that
\begin{equation} \label{eqRisBounded}
\mathcal{R} \subseteq \bar{B}_p(R).
\end{equation}
By Lemma \ref{lemmaCompactnessProperty} (in the appendix), $\bar{B}_p(R)$ is compact for the $d_1$-distance, thus it is bounded. It follows that $\mathcal{R}$ and $\text{cl}(\mathcal{R})$ are bounded. We can therefore consider, for future reference, the diameter of $\text{cl}(\mathcal{R})$, defined by
\begin{equation} \label{eqDiameter}
D:= \sup_{m_1,m_2 \in \text{cl}(\mathcal{R})} d_1(m_1,m_2).
\end{equation}

\subsection{Formulation of the problem and regularity assumptions}

Let $F \colon \mathcal{P}_p(\R^n) \rightarrow \R$ and $G \colon \mathcal{P}_p(\R^n) \rightarrow \R^N$ be two given mappings.
We aim at studying the following problem:
\begin{equation*} \tag{$P$} \label{eqPb}
\inf_{u \in \mathcal{U}_0(Y_0)} \ F(m^{0,Y_0,u}_T) \quad \text{subject to: } G(m^{0,Y_0,u}_T) \leq 0.
\end{equation*}
Throughout the article, we assume that the next two assumptions, dealing with the continuity and the differentiability of $F$ and $G$, are satisfied. The constant $R$ used in these assumptions is given by \eqref{eqRisBounded}.

\begin{assumption} \label{hypContinuity}
The restrictions of $F$ and $G$ to $\bar{B}_p(R)$ are continuous for the $d_1$-distance.
\end{assumption}

In order to state optimality conditions, we need a notion of derivative for the mappings $F$ and $G$. Denoting by $\mathcal{M}(\R^n)$ the set of finite signed measures on $\R^n$, we define:
\begin{equation*}
\widehat{\mathcal{M}}_{p}(\R^n)
= \Big\{
m \in \mathcal{M}(\R^n) \,\big|\, \int_{\R^n} |x|^{p} \dd |m|(x) < + \infty, \ \int_{\R^n} 1 \dd m(x)= 0
\Big\}.
\end{equation*}
Let $A\colon \widehat{\mathcal{M}}_{p}(\R^n) \rightarrow \R^q$ be a linear mapping. We say that the function $\phi:\R^n \rightarrow \R^q$ is a representative of $A$ if for all $m \in \widehat{\mathcal{M}}_p(\R^n)$, the integral $\int_{\R^n} \phi(x) \dd m(x)$ is well-defined and equal to $Am$:
\begin{equation*}
Am= \int_{\R^n} \phi(x) \dd m(x).
\end{equation*}
If $\phi$ is a representative of $A$, then for any constant $c \in \R^q$, $\phi+c$ is also a representative of $A$, since
\begin{equation*}
\int_{\R^n} c \dd m(x) = 0, \quad \forall m \in \widehat{\mathcal{M}}_p(\R^n).
\end{equation*}
Conversely, if the value of a representative $\phi$ is fixed for a given point $x_0 \in \R^n$, then for all $x \in \R^n$, the value of $\phi(x)$ is determined by
\begin{equation*}
\phi(x)= \phi(x_0) + A(\delta_x-\delta_{x_0}),
\end{equation*}
where $\delta_{x}$ and $\delta_{x_0}$ are the Dirac measures centered at $x$ and $x_0$, respectively.
Therefore, the representative, if it exists, is uniquely defined up to a constant.

\begin{assumption} \label{hypDiff}
\begin{enumerate}
\item For all $m$, $m_1$, and $m_2 \in \bar{B}_p(R)$, there exists a linear form $DF(m)\colon$ $\widehat{\mathcal{M}}_{p}(\R^n) \rightarrow \R$ such that for all $\varepsilon>0$, there exist $\bar{\xi}$ and $\bar{\zeta}$ in $(0,1]$ such that for all $\xi \in [0,\bar{\xi}]$ and for all $\zeta \in [0,\bar{\zeta}]$,
\begin{equation*} \label{eqDirectionalDerivative}
\big| F( \tilde{m} )
- \big[ F(m) + \xi DF(m)(m_1-m) + \zeta DF(m)(m_2-m) \big] \big| \leq \varepsilon(\xi + \zeta),
\end{equation*}
where
\begin{equation*}
\tilde{m}= (1-\xi-\zeta)m + \xi m_1 + \zeta m_2 = m + \xi(m_1-m) + \zeta (m_2-m).
\end{equation*}
Moreover, $DF(m)$ possesses a continuous representative, dominated by $|x|^p$, and denoted $x \in \R^n \mapsto DF(m,x) \in \R$.
\item For all $m$, $m_1$, and $m_2 \in \bar{B}_p(R)$ there exists a linear mapping $DG(m)\colon$ $\widehat{\mathcal{M}}_{p}(\R^n) \rightarrow \R^N$ such that for all $\varepsilon > 0$, there exists $\bar{\xi}$ and $\bar{\zeta}$ in $(0,1]$ such that for all $\xi_a$ and $\xi_b \in [0,\bar{\xi}]$ and for all $\zeta_a$ and $\zeta_b \in [0,\bar{\zeta}]$,
\begin{align}
\big| G( m_b ) - G( m_a ) & -
(\xi_b-\xi_a)DG(m)(m_1-m) 
\notag \\[0.5em]
& \quad
-(\zeta_b-\zeta_a) DG(m)(m_2-m)  \big| \leq \varepsilon \big( |\xi_b-\xi_a| + |\zeta_b-\zeta_a| \big),
\label{eqDirDer2}
\end{align}
where
\begin{align*}
m_a= \ & (1-\xi_a-\zeta_a)m + \xi_a m_1 + \zeta_a m_2 \\
m_b= \ & (1-\xi_b-\zeta_b)m + \xi_b m_1 + \zeta_b m_2.
\end{align*}
Moreover, $DG(m)$ possesses a continuous representative, dominated by $|x|^p$, and denoted $x \in \R^n \mapsto DG(m,x) \in \R^N$.
\end{enumerate}
\end{assumption}

In the article, we make use of the derivative $DF(m)$ (a linear form from $\widehat{\mathcal{M}}_{p}(\R^n)$ to $\R$) and its representative. The two notions can be distinguished according to the presence (or not) of the variable $x$.
Note also that the differentiability assumption on $G$ is a strict differentiability assumption. It is a little bit stronger than the assumption on $F$.

\subsection{Discussion of the notion of derivative} \label{subsection:disscussion}

A general class of cost functions satisfying Assumptions \ref{hypContinuity} and \ref{hypDiff} can be described as follows. Let $K \in \mathbb{N}$, let $\Psi \colon \R^K \rightarrow \R^q$ be differentiable, let $\phi\colon \R^n \rightarrow \R^K$ be a continuous function dominated by $|x|^{p}$.
We define then on $\mathcal{P}_{p}(\R^n)$:
\begin{equation} \label{eqExample1}
H(m)= \Psi \Big( \int_{\R^n} \phi(x) \dd m(x) \Big).
\end{equation}
Note that for all control processes $u \in \mathcal{U}_0(Y_0)$,
\begin{equation*}
H \big( m_T^{0,Y_0,u} \big)= \Psi \Big( \mathbb{E} \big[ \phi \big( X_T^{0,Y_0,u} \big) \big] \Big).
\end{equation*}
For all $R \geq 0$, the continuity of $H$ on $\bar{B}_{p}(R)$  follows from Lemma \ref{lemmaContinuityDominatedCost} (in the appendix).
One can easily check that the mapping $H$ is differentiable in the sense of Assumption \ref{hypDiff}.1. The representative of its derivative is given by
\begin{equation} \label{eqDiffFunctionExp}
DH(m,x)= D \Psi \Big( \int_{\R^n} \phi(z) \dd m(z) \Big) \phi(x),
\end{equation}
up to a constant.
Furthermore, if $\Psi$ is continuously differentiable, then $H$ is differentiable in the sense of Assumption \ref{hypDiff}. Note that the function $\phi$ does not need to be differentiable.
Further examples are discussed in detail in \cite[Section 4]{Pfe15b}.
We finish this subsection with two remarks.

\begin{remark}
The fact that $F$ should be defined on the whole space $\mathcal{P}_{p}(\R^n)$ discards cost functions whose formulation is based on the density of the probability measure (since a density does not always exists, for probability distributions in $\mathcal{P}_p(\R^n)$). For example, the following problem does not fit to the proposed framework:
\begin{equation*}
\inf_{u \in \mathcal{U}_0(Y_0)} \frac{1}{2} \| f - f_{\text{ref}} \|_{L^2(R^n)}, \quad
\text{subject to: $f= \text{PDF}(X_T^{0,Y_0,u})$},
\end{equation*}
where $\text{PDF}$ stands for probability density function and where $f_{\text{ref}}$ is a given probability density function.
\end{remark}

\begin{remark}
The notion of derivative provided in \cite[Section 6]{Car12} and the one introduced in Assumption \ref{hypDiff} are of different nature, because they aim at evaluating the variation of functions from $\bar{B}_p(R)$ to $\R$ on different kinds of paths. While our derivative is represented by a function from $\R^n$ to $\R$, the one of \cite{Car12} is represented by a function from $\R^n$ to $\R^n$ (see \cite[Theorem 6.5]{Car12}).
This difference of nature can be better understood by considering the mapping: $m \in \mathcal{P}_{p}(\R^n) \mapsto \int_{\R^n} \phi \dd m$. This mapping is a monomial, according to the terminology given in \cite[Example, page 43]{Car12}. Its derivative (in the sense of \cite{Car12}) is represented by the mapping: $x \in \R^n \mapsto D\phi(x) \in \R^n$ (see \cite[Example, page 44]{Car12}). In the current framework, the derivative of $m \in \mathcal{P}_{p}(\R^n) \mapsto \int_{\R^n} \phi \dd m$ is the real-valued mapping $x \in \R^n \mapsto \phi(x) \in \R$, up to a constant.
\end{remark}

\subsection{Existence of a solution}

Observe that problem \eqref{eqPb} can be equivalently formulated as follows:
\begin{equation*} \label{eqEquivalentPb} \tag{$P'$}
\inf_{m \in \mathcal{R}} F(m), \quad \text{subject to: } G(m) \leq 0,
\end{equation*}
where $\mathcal{R}$ is defined by \eqref{eqDefR}.
Indeed, if $\bar{u} \in \mathcal{U}_0(Y_0)$ is a solution to \eqref{eqPb}, then $\bar{m}:= m_T^{0,Y_0,\bar{u}}$ is a solution to \eqref{eqEquivalentPb} and conversely, if $\bar{m} \in \mathcal{R}$ is a solution to \eqref{eqEquivalentPb}, then any $\bar{u} \in \mathcal{U}_0(Y_0)$ such that $\bar{m}= m_T^{0,Y_0,\bar{u}}$ is a solution to \eqref{eqPb}.
The feasible set $\mathcal{R}_\text{ad}$ of Problem \eqref{eqEquivalentPb} is defined by
\begin{equation} \label{eq:defRad}
\mathcal{R}_\text{ad}= \{ m \in \mathcal{R} \,|\, G(m) \leq 0 \}.
\end{equation}
By continuity of $F$ for the $d_1$-distance, the value of the following problem:
\begin{equation*} \label{eqMinimizerReformulatedPb} \tag{$P''$}
\inf_{m \in \text{cl}(\mathcal{R}_{\text{ad}})} F(m)
\end{equation*}
is the same as the one of problems \eqref{eqPb} and \eqref{eqEquivalentPb}. Indeed, problem \eqref{eqMinimizerReformulatedPb} is simply obtained by replacing the feasible set of \eqref{eqEquivalentPb} by its closure (for the $d_1$-distance).

Lemma \ref{lemmaLipschitzEstimates} enables us to prove the existence of a solution to problem \eqref{eqMinimizerReformulatedPb}.

\begin{lemma} \label{lemmaExistenceSol}
If $\mathcal{R}_{\text{\emph{ad}}}$ is non-empty, then problem \eqref{eqMinimizerReformulatedPb} has a solution.
\end{lemma}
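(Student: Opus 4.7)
The plan is to apply the direct method of the calculus of variations: the feasible set of \eqref{eqMinimizerReformulatedPb} is by construction closed, and it lies inside a $d_1$-compact set on which $F$ is continuous, so any minimizing sequence must admit a cluster point realizing the infimum.

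More precisely, I would first observe that the infimum in \eqref{eqMinimizerReformulatedPb} is finite. Indeed, by \eqref{eqRisBounded}, $\mathcal{R} \subseteq \bar{B}_p(R)$, and since $\bar{B}_p(R)$ is $d_1$-compact by Lemma \ref{lemmaCompactnessProperty} (hence $d_1$-closed), taking closures yields $\text{cl}(\mathcal{R}_{\text{ad}}) \subseteq \text{cl}(\mathcal{R}) \subseteq \bar{B}_p(R)$. Under Assumption \ref{hypContinuity}, $F$ is continuous on the compact set $\bar{B}_p(R)$, hence bounded, so $\inf_{m \in \text{cl}(\mathcal{R}_{\text{ad}})} F(m) > -\infty$; the assumption that $\mathcal{R}_{\text{ad}}$ is non-empty ensures that this infimum is also $< +\infty$.

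Next, I would pick a minimizing sequence $(m_k)_{k \in \mathbb{N}}$ in $\text{cl}(\mathcal{R}_{\text{ad}})$, that is, $F(m_k) \to \inf_{m \in \text{cl}(\mathcal{R}_{\text{ad}})} F(m)$. Since $(m_k) \subseteq \bar{B}_p(R)$ and $\bar{B}_p(R)$ is $d_1$-compact, I can extract a subsequence (still denoted $(m_k)$) such that $m_k \to \bar{m}$ in $(\mathcal{P}_1(\R^n),d_1)$ for some $\bar{m} \in \bar{B}_p(R)$. The set $\text{cl}(\mathcal{R}_{\text{ad}})$ being $d_1$-closed, the limit $\bar{m}$ belongs to $\text{cl}(\mathcal{R}_{\text{ad}})$. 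Finally, by continuity of $F$ on $\bar{B}_p(R)$, $F(\bar{m}) = \lim_{k \to \infty} F(m_k) = \inf_{m \in \text{cl}(\mathcal{R}_{\text{ad}})} F(m)$, so $\bar{m}$ is a solution.

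There is no real obstacle here: the argument is the standard compactness-plus-continuity scheme, and all ingredients (a priori bound on $\mathcal{R}$, compactness of $\bar{B}_p(R)$, continuity of $F$) have already been set up. The only subtlety worth flagging is that one works with \eqref{eqMinimizerReformulatedPb} rather than directly with \eqref{eqEquivalentPb} precisely because $\mathcal{R}_{\text{ad}}$ itself is in general not closed, so the use of its closure is exactly what makes the direct method go through.
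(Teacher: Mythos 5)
Your proposal is correct and uses exactly the same ingredients as the paper's proof: $\text{cl}(\mathcal{R}_{\text{ad}})$ is a closed subset of the $d_1$-compact set $\bar{B}_p(R)$, hence compact, and $F$ is continuous on it; the paper simply invokes the extreme value theorem directly where you spell out the minimizing-sequence argument. No gap.
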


\begin{proof}
It is proved in Lemma \ref{lemmaCompactnessProperty} that the set $\bar{B}_{p}(R)$ is compact for the $d_1$-distance.
By Lemma \ref{lemmaLipschitzEstimates}, $\mathcal{R} \subseteq \bar{B}_p(R)$ and therefore, $\mathcal{R}_{\text{ad}} \subseteq \bar{B}_p(R)$. Since $\bar{B}_p(R)$ is closed, $\text{cl}(\mathcal{R}_{\text{ad}}) \subseteq \bar{B}_p(R)$ and therefore, $\text{cl}(\mathcal{R}_{\text{ad}})$ is compact, since it is a closed set of a compact set. The existence of a solution follows, since $F$ is continuous.
\end{proof}

It is in general difficult to prove the existence of a solution to \eqref{eqPb}.

\section{Convexity of the reachable set} \label{sectionConvexity}

This section is dedicated to the proof of the convexity of the closure of $\mathcal{R}$ (the set of reachable probability distributions at time $T$). This result is an important tool for the proof of the optimality conditions in Section \ref{sectionOptimality} and for the numerical method developed in Section \ref{sectionNumerics}.
Let us explain the underlying purpose with a simple example. Consider two processes $u_1$ and $u_2 \in \mathcal{U}_0(Y_0)$ and the corresponding final probability distributions $m_T^{0,Y_0,u_1}$ and $m_T^{0,Y_0,u_2}$. We aim at building a control process $u$ such that
\begin{equation} \label{eqBasicRelaxation}
m_T^{0,Y_0,u}= \frac{1}{2} \big( m_T^{0,Y_0,u_1} + m_T^{0,Y_0,u_2} \big).
\end{equation}
A very simple way of building such a control process is to define a random variable $S$ independent of $Y_0$ and $\mathcal{F}_{0,T}$ taking two different values with probability $1/2$. A control $u$ realizing \eqref{eqBasicRelaxation} can then be constructed in $\mathcal{U}_0\big( (S,Y_0) \big)$: it suffices that $u=u_1$ for one value of $S$ and that $u= u_2$ for the other. The obtained controlled process can be seen as a relaxed control, since it is now measurable with respect to a larger filtration.
The main idea of Lemma \ref{lemmaConvexity} is to construct control processes in $\mathcal{U}_0(Y_0)$ imitating the behaviour of the relaxed control process $u$.

\begin{lemma} \label{lemmaConvexity}
The closure of the set of reachable probability measures for the $d_1$-distance, denoted $\text{\emph{cl}}( \mathcal{R})$, is convex.
\end{lemma}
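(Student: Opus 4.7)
The plan is to show that for arbitrary $u_1, u_2 \in \mathcal{U}_0(Y_0)$ and $\lambda \in (0,1)$, the convex combination $m_\lambda := \lambda m_T^{0,Y_0,u_1} + (1-\lambda) m_T^{0,Y_0,u_2}$ is the $d_1$-limit of a sequence of reachable measures. Following the relaxation heuristic sketched immediately before the lemma, a short initial interval $[0,\varepsilon]$ will be sacrificed to extract from the Brownian motion a Bernoulli$(\lambda)$ ``selector'' $S_\varepsilon$ playing the role of the artificial variable $S$; after time $\varepsilon$, $u_1$ and $u_2$ are re-implemented using the shifted Brownian motion $\tilde{W}_s := W_{s+\varepsilon} - W_\varepsilon$, so that their pulled-back versions become independent of $S_\varepsilon$.

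Concretely, fix $\varepsilon \in (0,T)$ and some $v_0 \in U$; set $S_\varepsilon := \mathbf{1}_{\{W_\varepsilon^1 \leq a_\varepsilon\}}$ with $a_\varepsilon$ chosen so that $\mathbb{P}(S_\varepsilon = 1) = \lambda$, so $S_\varepsilon$ is $\mathcal{F}_{0,\varepsilon}$-measurable. Using the Doob--Dynkin lemma, write $u_i(t) = f_i(t, Y_0, W|_{[0,t]})$ for $U$-valued measurable functionals $f_i$, and define $\tilde{u}_i(s) := f_i(s, Y_0, \tilde{W}|_{[0,s]})$ for $s \in [0,T-\varepsilon]$. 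The candidate control is
\[
u_\varepsilon(t) := v_0 \mathbf{1}_{[0,\varepsilon]}(t) + \big( S_\varepsilon\, \tilde{u}_1(t-\varepsilon) + (1-S_\varepsilon)\, \tilde{u}_2(t-\varepsilon) \big) \mathbf{1}_{(\varepsilon,T]}(t),
\]
which belongs to $\mathcal{U}_0(Y_0)$ since $S_\varepsilon$ is $\mathcal{F}_{0,\varepsilon}$-measurable and $\tilde{u}_i(t-\varepsilon)$ is $(\sigma(Y_0) \vee \mathcal{F}_{\varepsilon,t})$-measurable for $t > \varepsilon$.

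The quantitative step focuses on one event, say $\{S_\varepsilon = 1\}$. The flow property and the substitution $\tau = t - \varepsilon$ identify $X_T^{0,Y_0,u_\varepsilon}$ on $\{S_\varepsilon=1\}$ with $\widehat{X}_{T-\varepsilon}^1$, where $\widehat{X}^1$ solves the SDE driven by $\tilde{W}$, controlled by $\tilde{u}_1$, started from $X_\varepsilon^{0,Y_0,u_\varepsilon}$. Introduce the auxiliary process $Z^1$ obtained by solving the \emph{same} equation but started from $Y_0$. Since $(Y_0, \tilde{W})$ is independent of $W|_{[0,\varepsilon]}$ (hence of $S_\varepsilon$) and has the joint law of $(Y_0, W|_{[0,T-\varepsilon]})$, one has $Z^1 \perp S_\varepsilon$ and $\mathcal{L}(Z^1) = \mathcal{L}(Z^1 \mid S_\varepsilon=1) = m_{T-\varepsilon}^{0,Y_0,u_1}$. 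Lemma \ref{lemmaLipschitzEstimates}.2 applied on $[0,\varepsilon]$ gives $\| X_\varepsilon^{0,Y_0,u_\varepsilon} - Y_0 \|_{L^p} = O(\sqrt{\varepsilon})$, and Lemma \ref{lemmaLipschitzEstimates}.3 then yields $\| \widehat{X}_{T-\varepsilon}^1 - Z^1 \|_{L^p} = O(\sqrt{\varepsilon})$. Conditioning on $\{S_\varepsilon=1\}$ costs only a factor $\lambda^{-1/p}$; combining with the bound $d_1(\mathcal{L}(X), \mathcal{L}(Y)) \leq \|X-Y\|_{L^1}$ and one further use of Lemma \ref{lemmaLipschitzEstimates}.2 to compare $m_{T-\varepsilon}^{0,Y_0,u_1}$ with $m_T^{0,Y_0,u_1}$, I would obtain $d_1(\mathcal{L}(X_T^{0,Y_0,u_\varepsilon} \mid S_\varepsilon=1), m_T^{0,Y_0,u_1}) = O(\sqrt{\varepsilon})$, and symmetrically for $S_\varepsilon = 0$.

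Writing $m_T^{0,Y_0,u_\varepsilon} = \lambda\, \mathcal{L}(X_T^{0,Y_0,u_\varepsilon} \mid S_\varepsilon=1) + (1-\lambda)\, \mathcal{L}(X_T^{0,Y_0,u_\varepsilon} \mid S_\varepsilon=0)$ and invoking the convexity in each argument of $d_1$ (immediate from the dual representation \eqref{eqDualWasserstein}) then gives $d_1(m_T^{0,Y_0,u_\varepsilon}, m_\lambda) = O(\sqrt{\varepsilon}) \to 0$ as $\varepsilon \downarrow 0$, placing $m_\lambda$ in $\text{cl}(\mathcal{R})$. The main delicate point is the measurability bookkeeping: the Doob--Dynkin representation of the $u_i$ and the verification that their pull-backs along $\tilde{W}$ are independent of $S_\varepsilon$ with the claimed conditional laws. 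Once these are settled, the $O(\sqrt{\varepsilon})$ estimates are routine applications of Lemma \ref{lemmaLipschitzEstimates}.
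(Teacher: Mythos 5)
Your proposal is correct and follows essentially the same route as the paper's proof: delay the controls by $\varepsilon$, use the first Brownian increment on $[0,\varepsilon]$ as an independent randomization device (your Bernoulli selector $S_\varepsilon$ is the two-value case of the paper's events $A_k$), and bound the two error sources --- the perturbed initial condition at time $\varepsilon$ and the shortened horizon $T-\varepsilon$ --- by $O(\sqrt{\varepsilon})$ via Lemma \ref{lemmaLipschitzEstimates}. The only remaining (routine) step, which the paper also dispatches in one line, is passing from ``convex combinations of elements of $\mathcal{R}$ lie in $\text{cl}(\mathcal{R})$'' to the convexity of $\text{cl}(\mathcal{R})$ itself by a density argument.
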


\begin{proof}
Our approach mainly consists in proving that
\begin{equation} \label{eqInclusion1}
\text{conv}(\mathcal{R})
\subseteq \text{cl}(\mathcal{R}).
\end{equation}
Let $K \in \mathbb{N}^*$, let $u^1,...,u^K$ in $\mathcal{U}_0(Y_0)$, let $\theta_1,...,\theta_K$ in $\R_+ \backslash \{ 0 \}$ with $\sum_{k=1}^K \theta_k= 1$. To prove \eqref{eqInclusion1}, it suffices to prove that there exists a sequence $(u^\varepsilon)_{\varepsilon \geq 0}$ in $\mathcal{U}_0(Y_0)$ such that
\begin{equation} \label{eqInclusion1Bis}
d_1 \Big( m_T^{0,Y_0,u^\varepsilon}, {\textstyle \sum_{k=1}^K} \theta_k m_T^{0,Y_0,u^k} \Big) \underset{\varepsilon \to 0}{\longrightarrow} 0.
\end{equation}
Let $0<\varepsilon<T$, let $\tilde{u}^1,...,\tilde{u}^K$ be $K$ processes in $\mathcal{U}_{\varepsilon}(Y_0)$ such that for all $k$, the processes $(u_s^k)_{s \in [0,T-\varepsilon]}$ and $(\tilde{u}_s^k)_{s \in [\varepsilon,T]}$ can be seen as the same measurable function of respectively
\begin{equation*}
(Y_0,(W_s-W_0)_{s \in [0,T-\varepsilon]}) \quad \text{and} \quad (Y_0,(W_s-W_{\varepsilon})_{s \in [\varepsilon,T]}).
\end{equation*}
In other words, we simply delay the observation of the variation of the Brownian motion of a time $\varepsilon$.
Let $-\infty = r_0 < ... < r_K= + \infty$ be such that for all $k=1,...,K$,
\begin{equation*}
\int_{r_{k-1}}^{r_k} e^{-z^2} \dd z= \sqrt{2\pi} \theta_k
\end{equation*}
and let us denote by $A_k$ the following event:
\begin{equation*}
(W_{\varepsilon}^1-W_0^1)/\sqrt{\varepsilon} \in (r_{k-1},r_k),
\end{equation*}
where $W^1$ is the first coordinate of the Brownian motion. For all $k$, we have $\mathbb{P}\big[ A_k \big]= \theta_k$. Fixing $u^0 \in U$, we define $u^\varepsilon \in \mathcal{U}_0(Y_0)$ as follows:
\begin{align*}
u_t^\varepsilon = u^0, & \quad \text{for a.\,e.\@ $t \in (0,\varepsilon)$}, \\
u_t^\varepsilon = \tilde{u}_t^k, & \quad \text{for a.\,e.\@ $t\in (\varepsilon,T)$, when $A_k$ is realized}.
\end{align*}
For all $\phi \in \text{1-Lip}(\R^n)$,
\begin{align}
& \mathbb{E}\big[ \phi \big( X_T^{0,Y_0,u^\varepsilon} \big) \big]
= \sum_{k=1}^K \theta_k \, \mathbb{E} \Big[ \phi \Big( X_T^{\varepsilon,X_{\varepsilon}^{0,Y_0,u^\varepsilon},{u}^\varepsilon} \Big) \,\big|\, A_k \Big] \notag \\
& \quad = \sum_{k=1}^K \theta_k \, \mathbb{E} \Big[ \phi \Big( X_T^{\varepsilon,X_{\varepsilon}^{0,Y_0,u^0},\tilde{u}^k} \Big) \,\big|\, A_k \Big]
= \sum_{k=1}^K \theta_k \Big[ a_k + b_k + \mathbb{E} \big[ \phi\big(X_T^{0,Y_0,u^k} \big) \big] \Big],
\label{eqLemmaConv1}
\end{align}
where $a_k$ and $b_k$ are given by
\begin{align*}
a_k=\ & \mathbb{E} \Big[ \phi \Big( X_T^{\varepsilon,X_{\varepsilon}^{0,Y_0,u^0},\tilde{u}_k} \Big)-\phi \Big( X_T^{\varepsilon,Y_0,\tilde{u}_k} \Big) \big| A_k \Big], \\
b_k=\ & \mathbb{E} \big[ \phi \big( X_T^{\varepsilon, Y_0, \tilde{u}^k} \big) | A_k \big] - \mathbb{E} \big[ \phi \big( X_T^{0,Y_0,u^k} \big) \big].
\end{align*}
Let us first estimate $a_k$. Using the Lipschitz-continuity of $\phi$, we obtain that
\begin{equation*}
\theta_k |a_k| \leq
\mathbb{P}\big[ A_k \big] \mathbb{E} \Big[ \, \Big| X_T^{\varepsilon,X_{\varepsilon}^{0,Y_0,u^0},\tilde{u}_k} - X_T^{\varepsilon,Y_0,\tilde{u}_k} \Big| \big| A_k \Big] \\
\leq \mathbb{E} \Big[ \, \Big| X_T^{\varepsilon,X_{\varepsilon}^{0,Y_0,u^0},\tilde{u}_k} - X_T^{\varepsilon,Y_0,\tilde{u}_k} \Big| \, \Big].
\end{equation*}
We deduce from the Cauchy-Schwarz inequality and Lemma \ref{lemmaLipschitzEstimates} that
\begin{align}
\theta_k |a_k| \leq \ & \mathbb{E} \Big[ \, \Big| X_T^{\varepsilon,X_{\varepsilon}^{0,Y_0,u^0},\tilde{u}_k} - X_T^{\varepsilon,Y_0,\tilde{u}_k} \Big|^2 \, \Big]^{1/2} \notag \\
\leq \ & \sqrt{C_3} \mathbb{E} \big[ | X_{\varepsilon}^{0,Y_0,u^0} - Y_0 |^2 \big]^{1/2} \notag \\
\leq \ & \sqrt{C_2C_3\varepsilon} \, \big( 1 + \mathbb{E}\big[ |Y_0|^2 \big] \big)^{1/2}.
\label{eqLemmaConv2}
\end{align}
Let us estimate $b_k$.
Since $\tilde{u}_k$ and $Y_0$ are independent of $A_k$ and using the definition of $\tilde{u}_k$, we obtain that
\begin{equation*}
\mathbb{E} \big[ \phi(X_T^{\varepsilon,Y_0,\tilde{u}_k}) \,|\, A_k \big]
= \mathbb{E} \big[ \phi(X_T^{\varepsilon,Y_0,\tilde{u}_k}) \big]
= \mathbb{E} \big[ \phi(X_{T-\varepsilon}^{0,Y_0,u_k}) \big].
\end{equation*}
Therefore,
\begin{equation*}
b_k = \mathbb{E} \big[ \phi \big( X_{T-\varepsilon}^{0, Y_0, {u}^k} \big) - \phi \big( X_T^{0,Y_0,u^k} \big) \big].
\end{equation*}
We obtain with the Lipschitz-continuity of $\phi$, the Cauchy-Schwarz inequality, and Lemma \ref{lemmaLipschitzEstimates} that
\begin{align}
|b_k| \leq \ & \mathbb{E} \big[ \big| X_T^{0,Y_0,u^k} - X_{T-\varepsilon}^{0,Y_0,u^k} \big| \big] \notag \\
\leq \ & \mathbb{E} \big[ \big| X_T^{0,Y_0,u^k} - X_{T-\varepsilon}^{0,Y_0,u^k} \big|^2 \big]^{1/2} \notag \\
\leq \ & \sqrt{C_2 \varepsilon} \big( 1 + \mathbb{E}\big[ |Y_0|^2 \big)^{1/2}. \label{eqLemmaConv3}
\end{align}
Combining \eqref{eqLemmaConv1}, \eqref{eqLemmaConv2}, and \eqref{eqLemmaConv3}, we obtain that
\begin{equation*}
\Big| \mathbb{E}\big[ \phi \big( X_T^{0,Y_0,u^\varepsilon} \big) \big]
- \sum_{k=1}^K \theta_k \mathbb{E} \big[ \phi\big(X_T^{0,Y_0,u^k} \big) \big] \Big|
\leq \sum_{k=1}^K \theta_k \big( |a_k| + |b_k| \big)
\leq M \sqrt{\varepsilon},
\end{equation*}
where $M$ is a constant independent of $\phi$ and $\varepsilon$.
Using the dual representation of $d_1$ (given by \eqref{eqDualWasserstein}), we deduce that
\begin{align*}
& d_1 \Big( m_T^{0,Y_0,u^\varepsilon}, { \sum_{k=1}^K} \theta_k m_T^{0,Y_0,u^k} \Big) \\
& \qquad \leq \sup_{\phi \in \text{1-Lip}(\R^n)} \Big\{ \mathbb{E}\big[ \phi \big( X_T^{0,Y_0,u^\varepsilon} \big) \big] - \sum_{k=1}^K \theta_k \mathbb{E} \big[ \phi \big( X_{T}^{0,Y_0,u^k} \big) \big] \Big\}= M\sqrt{\varepsilon}.
\end{align*}
This proves \eqref{eqInclusion1Bis} and thus justifies \eqref{eqInclusion1}. We can now conclude the proof. It follows from \eqref{eqInclusion1} that
\begin{equation} \label{eqInclusion2}
\text{cl} \big[ \text{conv} ( \mathcal{R} ) \big] \subseteq
\text{cl}( \mathcal{R} ).
\end{equation}
Since $\mathcal{R} \subseteq \text{conv}( \mathcal{R})$, we have
$\text{cl} (\mathcal{R}) \subseteq \text{cl} \big[ \text{conv}(\mathcal{R}) \big]$,
and therefore by \eqref{eqInclusion2},
$\text{cl}(\mathcal{R}) = \text{cl} \big[ \text{conv}( \mathcal{R}) \big]$.
It remains to prove that $\text{cl} \big[ \text{conv}(\mathcal{R}) \big]$ is convex, which is an easy task.
\end{proof}

\section{Optimality conditions} \label{sectionOptimality}

We prove in this section the main result: if a control $\bar{u}$ is a solution to \eqref{eqPb} and satisfies a qualification condition, then it is the solution to a standard problem of the form \eqref{eqStandardPb}.
Before proving our result, we recall in Subsection \ref{subSecStandardPb} some well-known properties of the value function associated with a standard problem.

\subsection{Standard problems} \label{subSecStandardPb}

Let $\phi:\R^n \rightarrow \R$ be a continuous function dominated by $|x|^{p}$. Let us define:
\begin{equation*}
\Phi: m \in \mathcal{P}_{p}(\R^n) \mapsto \int_{\R^n} \phi(x) \dd m(x).
\end{equation*}
The mapping $\Phi$ is linear, in so far as for all $m_1$ and $m_2 \in \mathcal{P}_{p}(\R^n)$, for all $\theta \in [0,1]$,
\begin{equation*}
\Phi(\theta m_1 + (1-\theta)m_2)= \theta \Phi(m_1) + (1-\theta) \Phi(m_2).
\end{equation*}
It is also continuous for the $d_1$-distance, see Lemma \ref{lemmaContinuityDominatedCost} (in the appendix).
We denote by \eqref{eqLinearizedPb} the following standard problem:
\begin{equation*} \label{eqLinearizedPb} \tag{$P(\phi)$}
\inf_{u \in \mathcal{U}_0(Y_0)} \, \mathbb{E}\big[ \phi(X_T^{0,Y_0,u}) \big].
\end{equation*}
Let $\hat{u} \in \mathcal{U}_0(Y_0)$ and let $\hat{m}= m_T^{0,Y_0,\hat{u}}$. By continuity of $\Phi$, the control process $\hat{u}$ is a solution to \eqref{eqLinearizedPb} if and only if
\begin{equation*}
\Phi(\hat{m})= \inf_{m \in \text{cl}(\mathcal{R})} \Phi(m).
\end{equation*}

We recall, for future reference, some well-known results concerning the value function associated with the standard problem \eqref{eqLinearizedPb}. We refer to the textbooks \cite{FS06}, \cite{Pha09}, \cite{YZ99} on this topic.
The value function $V$ associated with \eqref{eqLinearizedPb} is defined for all $t \in [0,T]$ and for all $x \in \R^n$ by
\begin{equation*}
V(t,x)= \inf_{u \in U_t^0} \mathbb{E}\big[ \phi(X_T^{t,x,u}) \big].
\end{equation*}
It can be characterized as the unique viscosity solution to the following Hamilton-Jacobi-Bellman (HJB) equation:
\begin{equation} \label{eqHJB}
-\partial_t V(t,x)= \inf_{u \in U} \Big\{ H(x,u,\partial_x V(t,x),\partial_{xx}^2 V(t,x)) \Big\}, \quad V(T,x)= \phi(x),
\end{equation}
where the Hamiltonian $H$ is defined  for $x \in \R^n$, $u \in U$, $p \in \R^n$, and $Q \in \R^{n \times n}$ by
\begin{equation*}
H(x,u,p,Q)=  \langle p, b(x,u) \rangle + \frac{1}{2} \text{tr}( \sigma(x,u) \sigma(x,u)^{\top} Q ).
\end{equation*}
If $V$ is sufficiently smooth, one can prove with a verification argument that any control process $u$ is a global solution to \eqref{eqLinearizedPb} if almost surely,
\begin{equation*}
u_t \in \underset{v \in U}{\text{arg min}} \ \Big\{ H(X_t^{0,Y_0,u},v,\partial_x V(t,X_t^{0,Y_0,u}),\partial_{xx}^2 V(t,X_t^{0,Y_0,u})) \Big\}, \quad \text{for a.e.\@ $t \in (0,T)$}.
\end{equation*}
Finally, note that the value of problem \eqref{eqLinearizedPb} is given by
\begin{equation*}
\inf_{u \in \mathcal{U}_0(Y_0)} \, \mathbb{E}\big[ \phi(X_T^{0,Y_0,u}) \big] = \int_{\R^n} V(0,x) \dd m(x),
\end{equation*}
where $m= \mathcal{L}(Y_0)$.

\subsection{Main result}

In this subsection, we give first-order optimality conditions in variational form for problem \eqref{eqPb} (defined in the introduction, page \pageref{eqPb}).
Along this subsection, a solution $\bar{u} \in \mathcal{U}_0(Y_0)$ to problem \eqref{eqPb} is fixed. We also set
\begin{equation*}
\bar{m}= m_T^{0,Y_0,\bar{u}}.
\end{equation*}
We first give a metric regularity result (Theorem \ref{theoMetricReg}), which is a key tool for the proof of the optimality conditions (Theorem \ref{theoOptiCond}).

Let us consider the sets $A$ and $I$ of active and inactive constraints at $\bar{m}$, defined by
\begin{equation*}
A= \{ j=1,...,N \,|\, G_j(\bar{m})= 0 \} \quad \text{and} \quad
I= \{ j=1,...,N \,|\, G_j(\bar{m})< 0 \}.
\end{equation*}
Let $N_A$ be the cardinality of $A$.
We define
\begin{equation*}
G_A(m)= (G_j(m))_{j \in A} \in \R^{N_A} \quad \text{and} \quad G_I(m)= (G_j(m))_{j \in I} \in \R^{N-N_A}.
\end{equation*}
We have
\begin{equation*}
\mathcal{R}_{\text{ad}} = \big\{ m \in \mathcal{R} \,|\, G_{A}(m) \leq 0, \ G_{I}(m) \leq 0 \big\}, \quad
G_{A}(\bar{m}) = 0, \quad \text{and} \quad
G_{I}(\bar{m}) < 0.
\end{equation*}

The following assumption is a qualification condition.

\begin{assumption} \label{hypQualification}
There exists $m_0 \in \text{\emph{cl}}(\mathcal{R})$ such that $DG_{A}(\bar{m})(m_0-\bar{m})<0$.
\end{assumption}

For all $z \in \R^{N_A}$, we denote by $z_+$ the vector defined by $(z_+)_j= \max(z_j,0)$ for $j \in A$. One can easily check that for all $z$ and $\tilde{z} \in \R^{N_A}$,
\begin{equation} \label{eqPropSupNorm}
| z_+ |_\infty \leq | z |_\infty, \quad
z- z_+ \leq 0, \quad \text{and} \quad
\big| | \tilde{z}_+|_\infty - | z_+ |_\infty \big| \leq | \tilde{z} - z |_\infty.
\end{equation}

\begin{theorem} \label{theoMetricReg}
If Assumption \ref{hypQualification} holds, then for all $m \in \text{\emph{cl}}(\mathcal{R})$, there exist two constants $\bar{\theta} \in (0,1]$ and $C>0$ such that for all $\theta \in [0,\bar{\theta}]$, for all $\varepsilon >0$, there exists $\eta \in [0,1]$ such that
\begin{equation} \label{eqRegMet}
\eta \leq C \, \big| G_{A}( m_\theta )_+ \big|_\infty + \varepsilon
\end{equation}
and such that $G\big( (1-\eta)m_\theta + \eta m_0 \big) < 0$,
where $m_\theta= (1-\theta) \bar{m} + \theta m$ and where $m_0$ is given by Assumption \ref{hypQualification}.
\end{theorem}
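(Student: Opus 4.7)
The plan is to use the qualification hypothesis as a linear descent direction for $G_A$ toward $m_0$, formalize this linearization via the strict differentiability in Assumption \ref{hypDiff}.2, and handle the inactive constraints by pure continuity.

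Set $\delta := -\max_{j \in A} [DG_A(\bar{m})(m_0 - \bar{m})]_j > 0$, guaranteed positive by Assumption \ref{hypQualification}, and let $M := |DG_A(\bar{m})(m - \bar{m})|_\infty$. Apply Assumption \ref{hypDiff}.2 at base point $\bar{m}$ with test directions $m_1 = m$, $m_2 = m_0$, and tolerance $\delta/8$, obtaining constants $\bar{\xi}, \bar{\zeta} \in (0,1]$. The choice $\xi_a = \theta, \zeta_a = 0$ and $\xi_b = \theta(1-\eta), \zeta_b = \eta$ gives $m_a = m_\theta$ and $m_b := (1-\eta)m_\theta + \eta m_0$, so that \eqref{eqDirDer2} yields, componentwise for $j \in A$,
\[ G_{A,j}(m_b) \leq G_{A,j}(m_\theta) + \theta\eta M - \eta\delta + (\delta/8)(\theta\eta + \eta). \]
Imposing $\bar{\theta} \leq \min(\bar{\xi}, \delta/(4M))$ then simplifies this to $G_{A,j}(m_b) \leq G_{A,j}(m_\theta) - \eta\delta/4$ for every $\theta \in [0,\bar{\theta}]$.

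For the inactive part, set $\gamma := -\max_{j \in I} G_{I,j}(\bar{m}) > 0$; continuity of $G_I$ on $\bar{B}_p(R)$ (Assumption \ref{hypContinuity}) yields $r > 0$ such that $d_1(m',\bar{m}) \leq r$ implies $G_I(m') \leq -\gamma/2 \mathbf{1}$. The dual representation \eqref{eqDualWasserstein} gives $d_1((1-\alpha)\mu + \alpha\nu, \mu) \leq \alpha d_1(\mu,\nu)$ for probability measures, so by the triangle inequality $d_1(m_b, \bar{m}) \leq (\theta + \eta) D$, where $D$ is the diameter defined in \eqref{eqDiameter}.

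Set $C := 4/\delta$ and $\eta_{\max} := \min(1, \bar{\zeta}, r/(2D))$. Further shrink $\bar{\theta}$ so that $\bar{\theta} \leq r/(2D)$ and $C |G_A(m_\theta)_+|_\infty \leq \eta_{\max}/4$ for $\theta \in [0,\bar{\theta}]$, which is possible by continuity of $G_A$ at $\bar{m}$ (where $G_A(\bar{m}) = 0$). Given $\theta \in [0,\bar{\theta}]$ and $\varepsilon > 0$, set $\tilde{\varepsilon} := \min(\varepsilon, \eta_{\max}/4) > 0$ and
\[ \eta := C |G_A(m_\theta)_+|_\infty + \tilde{\varepsilon}. \]
Then $\eta \leq C |G_A(m_\theta)_+|_\infty + \varepsilon$ as required, $\eta \in (0, \eta_{\max}]$, and $\theta + \eta \leq r/D$, whence $G_I(m_b) < 0$. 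Using $G_{A,j}(m_\theta) \leq |G_A(m_\theta)_+|_\infty$ together with $\eta \delta/4 = |G_A(m_\theta)_+|_\infty + \tilde{\varepsilon}\delta/4$, the linearization gives $G_{A,j}(m_b) \leq -\tilde{\varepsilon}\delta/4 < 0$, so $G(m_b) < 0$. The main subtlety is coordinating three independent scales---the differentiability tolerance (tied to $\delta$), the theorem's slack $\varepsilon$, and the smallness of $\bar{\theta}$---so that the single $\eta$ we construct lies simultaneously in the admissible windows of all three.
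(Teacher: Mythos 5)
Your proof is correct and follows essentially the same route as the paper's: the same application of the strict differentiability of $G$ (Assumption \ref{hypDiff}.2) along the two-parameter family $(1-\eta)m_\theta + \eta m_0$ to obtain a uniform decrease of order $-\eta\delta$ on the active constraints (this is the paper's Claim~1, with $\alpha$ in place of your $\delta$), continuity for the inactive constraints, and the choice $\eta = C\,|G_A(m_\theta)_+|_\infty + \tilde{\varepsilon}$ with a strictly positive slack to force the strict inequality $G(m_b)<0$ (the role played by the vector $y>0$ in the paper's Claim~2). The only cosmetic differences are that the paper packages the final step as a separate claim involving the perturbation $y$, and that your threshold $\delta/(4M)$ should be read as $+\infty$ in the degenerate case $M=0$.
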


The estimate \eqref{eqRegMet} is basically an estimate of the distance of $m_\theta$ to $\text{cl}(\mathcal{R}_{\text{ad}})$. Indeed, by the convexity of $\text{cl}(\mathcal{R})$, the probability measure $(1-\eta)m_\theta + \eta m_0$ lies in $\text{cl}(\mathcal{R})$. Since $G$ is continuous and since $G\big((1-\eta)m_\theta + \eta m_0 \big) < 0$, the probability measure $(1-\eta)m_\theta + \eta m_0$ lies in $\text{cl}(\mathcal{R}_{\text{ad}})$. It is at a distance $\eta d_1(m_\theta,m_0)$ of $m_\theta$. The real number $\eta \geq 0$ is of same order as the quantity $|G_A(m_\theta)_+|_\infty$, which indicates how much the constraints are violated.

\begin{proof}[Proof of Theorem \ref{theoMetricReg}]
For all $\theta \in [0,1]$, we define
\begin{equation*}
\mathcal{G}_\theta \colon \eta \in [0,1] \mapsto
\mathcal{G}_\theta(\eta) = G_{A} \big( (1-\eta)m_\theta + \eta m_0 \big) \in \R^{N_A}.
\end{equation*}
By Assumption \ref{hypQualification}, $DG_A(\bar{m})(m_0-\bar{m}) < 0$. Let $\alpha > 0$ be such that
\begin{equation} \label{eqRM0}
DG_A(\bar{m})(m_0-\bar{m}) \leq - \alpha \mathbf{1},
\end{equation}
where $\mathbf{1}=(1,...,1) \in \R^{N_A}$. The above inequality (as well as all those involving vectors) must be understood coordinatewise.

\emph{Claim 1}. There exist $\bar{\eta} \in (0,1]$ and $\bar{\theta} \in (0,1]$ such that for all $\eta \in [0,\bar{\eta}]$ and for all $\theta \in [0,\bar{\theta}]$,
\begin{equation} \label{eqRM1}
\mathcal{G}_\theta(\eta)-\mathcal{G}_\theta(0) \leq -\frac{\alpha \eta}{2} \mathbf{1}.
\end{equation}
Let us prove this claim. Let $\varepsilon= \alpha/8$, let $\bar{\xi}$ and $\bar{\zeta}$ in $(0,1]$ be such that \eqref{eqDirDer2} holds. We set $\bar{\eta}= \bar{\xi}$ and $\bar{\theta}= \bar{\zeta}$.
We reduce the value of $\bar{\theta}$, if necessary, so that
\begin{equation} \label{eqReg111}
\bar{\theta} |DG_A(\bar{m})(m-\bar{m})|_\infty \leq \frac{\alpha}{4}.
\end{equation}
For all $\eta \in [0,\bar{\eta}]$ and for all $\theta \in [0,\bar{\theta}]$, we have
\begin{align}
(1-\eta) m_\theta + \eta m_0
= \ & (1-\eta) (1-\theta) \bar{m} + (1-\eta) \theta m + \eta m_0, \notag \\
(1-\eta) m_\theta + \eta m_0 - m_\theta
= \ & -\eta \theta (m-\bar{m}) + \eta (m_0-\bar{m}), \label{eqCombiForReg}
\end{align}
therefore, combining \eqref{eqDirDer2} and \eqref{eqCombiForReg},
\begin{align} 
& \big| \mathcal{G}_\theta(\eta)-\mathcal{G}_\theta(0)- \eta DG_A(\bar{m})(m_0-\bar{m}) + \eta \theta DG_A(\bar{m})(m-\bar{m}) \big|_\infty \notag \\
& \qquad = \big| \mathcal{G}_\theta(\eta)-\mathcal{G}_\theta(0)
- DG_A(\bar{m})\big( (1-\eta) m_\theta + \eta m_0 - m_\theta) \big) \big|_\infty \notag \\
& \qquad \leq  \frac{\alpha}{8}( \eta \theta + \eta ) \leq \frac{\alpha\eta}{4}.
\label{eqRM2}
\end{align}
Moreover, by \eqref{eqReg111},
\begin{equation} \label{eqRM3}
|\eta \theta DG_A(\bar{m})(m-\bar{m})|_\infty
\leq \eta \bar{\theta} |DG_A(\bar{m})(m-\bar{m})|_\infty
\leq \frac{\alpha \eta}{4}.
\end{equation}
Combining \eqref{eqRM2} and \eqref{eqRM3}, we obtain that
\begin{equation*}
| \mathcal{G}_\theta(\eta)-\mathcal{G}_\theta(0) - \eta DG_A(\bar{m})(m_0-\bar{m}) |_\infty
\leq |\eta \theta DG_A(\bar{m}) (m-\bar{m})|_\infty + \frac{\alpha \eta}{4}
\leq \frac{\alpha \eta}{2}.
\end{equation*}
It follows that
\begin{equation*}
\mathcal{G}_\theta(\eta)-\mathcal{G}_\theta(0) \leq \eta DG_A(\bar{m})(m_0-\bar{m}) + \frac{\alpha \eta}{2} \mathbf{1}.
\end{equation*}
Combining the above inequality with \eqref{eqRM0}, we obtain \eqref{eqRM1}. The claim is proved.

Now, let $\eta_0 \in (0,1]$ and $\theta_0 \in (0,1]$ be sufficiently small, so that for all $\eta \in [0,\eta_0]$ and for all $\theta \in [0,\theta_0]$,
\begin{equation*}
G_I\big( (1-\eta) m_\theta + \eta m_0 \big) < 0.
\end{equation*}
Let us set
\begin{equation} \label{eqRM4}
\gamma= \frac{\alpha}{4} \min(\bar{\eta},\eta_0).
\end{equation}
Recall that $\mathcal{G}_0(0)=G_A(\bar{m})= 0$.
Once again, we reduce the value of $\bar{\theta}$, if necessary, so that $\bar{\theta} \leq \theta_0$ and so that for all $\theta \in [0,\bar{\theta}]$,
\begin{equation} \label{eqRM5}
| \mathcal{G}_\theta(0) |_\infty \leq \gamma.
\end{equation}

\emph{Claim 2}. There exists $C>0$ such that for all $y \in \R^{N_{A}}$ with $| y |_\infty \leq \gamma$, for all $\theta \in [0,\bar{\theta}]$, there exists $\eta \in [0,\bar{\eta}]$ such that
\begin{equation*}
\mathcal{G}_\theta(\eta) + y  \leq 0 \quad \text{and} \quad
\eta \leq C \big| \big( \mathcal{G}_\theta(0)+y \big)_+ \big|_\infty.
\end{equation*}
Let us prove the claim. We set $C= \frac{2}{\alpha}$.
For a given $\theta \in [0,\bar{\theta}]$ and for a given $y$ such that $| y |_\infty \leq \gamma$, we set
$\eta = C | (\mathcal{G}_\theta(0)+y)_+ |_\infty$.
Then, to prove the claim, we just have to check that $\eta \leq \bar{\eta}$ and that $\mathcal{G}_\theta(\eta) + y \leq 0$.
Using \eqref{eqRM4}, \eqref{eqRM5}, and the definition of $C$, we obtain that
\begin{equation*}
| (\mathcal{G}_\theta(0) + y)_+ |_\infty
\leq | \mathcal{G}_\theta(0) + y |_\infty
\leq | \mathcal{G}_\theta(0) |_\infty + | y |_\infty
\leq 2 \gamma
\leq \frac{\alpha \bar{\eta}}{2}
= \frac{\bar{\eta}}{C}.
\end{equation*}
Therefore, $\eta = C | (\mathcal{G}_\theta(0) + y)_+ |_\infty \leq \bar{\eta}$. Using the first claim, we obtain that
\begin{equation} \label{eqRM6}
\mathcal{G}_\theta(\eta) + y
\leq \mathcal{G}_\theta(0) - \frac{\alpha \eta}{2} \mathbf{1} + y.
\end{equation}
It directly follows from the definitions of $C$ and $\eta$ that
\begin{equation*}
- \frac{\alpha \eta}{2} \mathbf{1} = - \frac{\eta}{C} \mathbf{1} \leq
- |(\mathcal{G}_\theta(0)+y)_+|_\infty \mathbf{1}
\leq
-(\mathcal{G}_\theta(0) + y)_+.
\end{equation*}
Combined with \eqref{eqRM6}, we obtain that
\begin{equation*}
\mathcal{G}_\theta(\eta) + y \leq (\mathcal{G}_\theta(0) + y) -(\mathcal{G}_\theta(0) + y)_+
\end{equation*}
and finally that $\mathcal{G}_\theta(\eta) + y \leq 0$, by \eqref{eqPropSupNorm}.
This proves the second claim.

\emph{Conclusion}. We can finally prove the theorem. Let $\theta \in [0,\bar{\theta}]$ and let $\varepsilon > 0$.
We set
\begin{equation*}
y= \min \Big( \gamma, \frac{\varepsilon}{C} \Big) \mathbf{1}.
\end{equation*}
Since $| y |_\infty \leq \gamma$, there exists $\eta \in [0,\bar{\eta}]$ such that $\mathcal{G}_\theta(\eta) + y \leq 0$ and such that $\eta \leq C | (\mathcal{G}_\theta(0)+y)_+ |_\infty$, by the second claim.
By \eqref{eqPropSupNorm},
\begin{equation} \label{eqOneMoreEstimate}
\eta \leq C | (\mathcal{G}_\theta(0)+y)_+ |_\infty
\leq C \big( | \mathcal{G}_\theta(0)_+ |_\infty + | y |_\infty \big).
\end{equation}
Since $\mathcal{G}_\theta(0) = G_A(m_\theta)$ and $| y |_\infty \leq \varepsilon/C$, we obtain that
\begin{equation*}
\eta \leq C | G_A(m_\theta)_+ |_\infty + \varepsilon.
\end{equation*}
The estimate \eqref{eqRegMet} is therefore satisfied.
Since $| \mathcal{G}_\theta(0)_+ |_\infty \leq \gamma$ and $| y |_\infty \leq \gamma$, we deduce from \eqref{eqRM4} and from \eqref{eqOneMoreEstimate} that
\begin{equation} \label{eqRegM5}
\eta \leq 2C \gamma = \frac{4\gamma}{\alpha} \leq \eta_0.
\end{equation}
Let us set $\hat{m}= (1-\eta)m + \eta m_0$. By construction, $\mathcal{G}_\theta(\eta) + y = G_{A}(\hat{m}) + y \leq 0$ and thus,
$G_{A}(\hat{m}) < 0$, since $y> 0$.
Moreover, $G_I(\hat{m}) <0$, since $\theta \in [0,\theta_0]$ and $\eta \in [0,\eta_0]$. Therefore, $G(\hat{m})<0$. The theorem is proved.
\end{proof}

In the following theorem, we prove first-order optimality conditions in variational form for problem \eqref{eqPb}. We make use of the Lagrangian $L$, defined by
\begin{equation*}
L\colon (m,\lambda) \in \bar{B}_p(R) \times \R^N \mapsto F(m) + \langle \lambda, G(m) \rangle.
\end{equation*}
The Lagrangian is differentiable (with respect to $m$) in the sense of Assumption \ref{hypDiff}.1 with
\begin{equation*}
DL(m,\lambda)\tilde{m} = DF(m) \tilde{m} + \langle \lambda, DG(m)\tilde{m} \rangle , \quad \forall \tilde{m} \in \widehat{\mathcal{M}}_p(\R^n).
\end{equation*}
A representative of $\tilde{m} \mapsto DL(m,\lambda)\tilde{m}$ is given by
\begin{equation*}
DL(m,\lambda,x)= DF(m,x) + \langle \lambda, DG(m,x) \rangle,
\end{equation*}
up to a constant. Note that the mapping $\tilde{m} \in \bar{B}_p(R) \mapsto DL(m,\lambda)\tilde{m}$ is continuous, by Lemma \ref{lemmaContinuityDominatedCost}.

In the sequel, we say that a non-negative Lagrange multiplier $\lambda \in \R^N$ satisfies the complementarity condition at $\bar{m}$ if for all $i=1,...,N$, $G_i(\bar{m}) < 0 \Longrightarrow \lambda_i= 0$.

\begin{theorem} \label{theoOptiCond}
Let $\bar{u}$ be a solution to problem \eqref{eqPb}. Let $\bar{m}= m_T^{0,Y_0,\bar{u}}$. If Assumption \ref{hypQualification} holds, then there exists a non-negative Lagrange multiplier $\lambda \in \R^N$ satisfying the complementarity condition at $\bar{m}$ which is such that $\bar{u}$ is a solution to the standard problem \eqref{eqLinearizedPb} with
\begin{equation*}
\phi(x)= DL(\bar{m},\lambda,x).
\end{equation*}
\end{theorem}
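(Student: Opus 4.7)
The plan is to leverage the convexity of $\text{cl}(\mathcal{R})$ (Lemma \ref{lemmaConvexity}) in order to reduce the proof to a classical Karush--Kuhn--Tucker argument on a convex set. Since $\bar{m}$ solves \eqref{eqMinimizerReformulatedPb}, and in view of the equivalence recorded in Subsection \ref{subSecStandardPb}, it suffices to exhibit $\lambda \in \R_+^N$ with $\lambda_i = 0$ whenever $G_i(\bar{m}) < 0$ such that
\[
DL(\bar{m},\lambda)(m - \bar{m}) \geq 0, \qquad \forall m \in \text{cl}(\mathcal{R}).
\]
Indeed, setting $\phi(x) := DL(\bar{m},\lambda,x)$, which is continuous and dominated by $|x|^p$ by Assumption \ref{hypDiff} and is therefore an admissible standard cost, the displayed inequality rewrites as $\int \phi \, d\bar{m} \leq \int \phi \, dm$ for every $m \in \text{cl}(\mathcal{R})$, that is, $\bar{m}$ minimizes $\Phi$ over $\text{cl}(\mathcal{R})$; the characterization in Subsection \ref{subSecStandardPb} then guarantees that $\bar{u}$ solves \eqref{eqLinearizedPb}.

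Step 1 (no feasible strict descent direction): I would show there is no $\hat{m} \in \text{cl}(\mathcal{R})$ with $DF(\bar{m})(\hat{m}-\bar{m}) < 0$ and $DG_A(\bar{m})(\hat{m}-\bar{m}) \leq 0$. Suppose for contradiction such an $\hat{m}$ exists and set $m_\theta := (1-\theta)\bar{m} + \theta\hat{m}$. Applying Assumption \ref{hypDiff}.2 with $\xi_a = \zeta_a = \zeta_b = 0$ and $\xi_b = \theta$, together with $G_A(\bar{m}) = 0$ and $DG_A(\bar{m})(\hat{m}-\bar{m}) \leq 0$, one gets $|G_A(m_\theta)_+|_\infty \leq \varepsilon\theta$ for any prescribed $\varepsilon > 0$, provided $\theta$ is small enough. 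Theorem \ref{theoMetricReg}, with its free parameter chosen to be $o(\theta)$, then produces $\eta = o(\theta)$ such that $\hat{m}_\theta := (1-\eta)m_\theta + \eta m_0$ satisfies $G(\hat{m}_\theta) < 0$; by Lemma \ref{lemmaConvexity} and continuity of $G$, $\hat{m}_\theta \in \text{cl}(\mathcal{R}_{\text{ad}})$. Expanding $F(\hat{m}_\theta)$ with Assumption \ref{hypDiff}.1 along $\xi = (1-\eta)\theta$, $\zeta = \eta$, $m_1 = \hat{m}$, $m_2 = m_0$, the leading term is $\theta DF(\bar{m})(\hat{m}-\bar{m}) < 0$, while all remaining terms are $o(\theta)$. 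Hence $F(\hat{m}_\theta) < F(\bar{m})$ for sufficiently small $\theta$, contradicting the optimality of $\bar{m}$ in \eqref{eqMinimizerReformulatedPb}.

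Step 2 (finite-dimensional separation): The set
\[
\mathcal{C} := \big\{\big(DF(\bar{m})(m-\bar{m}),\, DG_A(\bar{m})(m-\bar{m})\big) : m \in \text{cl}(\mathcal{R})\big\} \subset \R^{1+N_A}
\]
is convex, by linearity of the derivatives and Lemma \ref{lemmaConvexity}, and by Step 1 it is disjoint from the open convex cone $\{(a,b) : a < 0,\, b < 0\}$. Hahn--Banach in $\R^{1+N_A}$ yields a nonzero $(\mu_0,\mu_A)$ with $\mu_0 \geq 0$, $\mu_A \geq 0$, and $\mu_0 a + \langle \mu_A, b\rangle \geq 0$ on $\mathcal{C}$. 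If $\mu_0 = 0$, then $\mu_A \neq 0$, and evaluating at $m = m_0$ would give $\langle \mu_A, DG_A(\bar{m})(m_0-\bar{m})\rangle \geq 0$, contradicting Assumption \ref{hypQualification}. Hence $\mu_0 > 0$; setting $\lambda_A := \mu_A/\mu_0$ and $\lambda_i := 0$ for $i \in I$ produces a multiplier with the required non-negativity and complementarity, and the variational inequality from Step 1 combined with Step 2 is exactly $DL(\bar{m},\lambda)(m-\bar{m}) \geq 0$ on $\text{cl}(\mathcal{R})$, concluding the proof.

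The hardest point will be the careful bookkeeping in Step 1: one must calibrate the metric-regularity error (via the free $\varepsilon$ in Theorem \ref{theoMetricReg}) and the first-order remainder of Assumption \ref{hypDiff}.1 so that the $o(\theta)$ correction needed to restore feasibility via $m_0$ does not swallow the honest $\theta$-order descent contributed by $\hat{m}$. Crucially, the strict differentiability of $G$ (Assumption \ref{hypDiff}.2) is what upgrades $|G_A(m_\theta)_+|_\infty$ from $O(\theta)$ to $o(\theta)$ and thereby forces $\eta = o(\theta)$; the merely directional differentiability assumed on $F$ would not have sufficed here.
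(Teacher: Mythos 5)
Your proof is correct, and its first half coincides with the paper's: your Step 1 is precisely the paper's proof that $V(0)=0$ for the linearized problem --- same construction $m_\theta=(1-\theta)\bar m+\theta\hat m$, same use of strict differentiability to get $|G_A(m_\theta)_+|_\infty=o(\theta)$, same invocation of Theorem \ref{theoMetricReg} with the free parameter set to $o(\theta)$ so that the feasibility-restoring correction $\eta(m_0-\bar m)$ contributes only $o(\theta)$ to the cost, and the same appeal to Lemma \ref{lemmaConvexity} and the equality of values of \eqref{eqEquivalentPb} and \eqref{eqMinimizerReformulatedPb}. Where you genuinely diverge is the extraction of the multiplier. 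The paper introduces the perturbed linearized value function $V(y)$, shows it is convex (again via Lemma \ref{lemmaConvexity}) and finite near $0$ thanks to Assumption \ref{hypQualification}, computes its Legendre--Fenchel conjugate, and obtains $\lambda_A$ as an element of $\partial V(0)$, citing \cite{BS00}; you instead separate the convex image set $\mathcal{C}\subset\R^{1+N_A}$ from the open negative orthant and rule out the Fritz--John degenerate case $\mu_0=0$ by testing against $m_0$. Both are sound: your route is more elementary and self-contained (no perturbation-analysis machinery), while the paper's identifies the multiplier as a subgradient of the marginal value function, which carries a sensitivity interpretation and generalizes more systematically. One small correction to your closing commentary: the bound $|G_A(m_\theta)_+|_\infty=o(\theta)$ only requires directional differentiability of $G_A$ at $\bar m$ (a special case of \eqref{eqDirDer2} with $\xi_a=\zeta_a=\zeta_b=0$); the genuinely \emph{strict} differentiability of $G$ is what is consumed inside the proof of Theorem \ref{theoMetricReg}, to make the decrease estimate of Claim 1 there uniform in $\theta$.
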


\begin{remark}
We say then that the control process $\bar{u}$ satisfies the optimality conditions in variational form.
The optimality of $\bar{u}$ for the standard problem with $\phi(x)= DL(\bar{m},x)$ is equivalent to the following variational inequalities:
\begin{equation*}
\mathbb{E} \big[ DL(\bar{m},\lambda,X_T^{0,Y_0,u} ) ]
\geq \mathbb{E} \big[ DL(\bar{m},\lambda,X_T^{0,Y_0,\bar{u}} ) \big], \quad \forall u \in \mathcal{U}_0(Y_0)
\end{equation*}
and
\begin{equation} \label{eqVarIneqBis}
DL(\bar{m},\lambda)(m-\bar{m}) \geq 0, \quad \forall m \in \text{cl}(\mathcal{R}).
\end{equation}
In the sequel, we say that a probability measure $\bar{m} \in \text{cl}(\mathcal{R})$ such that $G(\bar{m}) \leq 0$ satisfies the optimality conditions in variational form if there exists a multiplier $\lambda \geq 0$ satisfying the complementarity condition and such that \eqref{eqVarIneqBis} holds.
\end{remark}

\begin{proof}[Proof of Theorem \ref{theoOptiCond}]
In view of the complementarity condition, it suffices to prove the existence of $\lambda_A \in \R^{N_A}$ such that $\lambda_A \geq 0$ and such that
\begin{equation*}
\inf_{m \in \text{cl}(\mathcal{R})} D\tilde{L}(\bar{m},\lambda_A)(m-\bar{m}) = 0,
\end{equation*}
where $\tilde{L}(m,\lambda_A)= F(m) + \langle \lambda_A,G_A(m) \rangle$. For all $m \in \text{cl}(\mathcal{R})$,
\begin{equation*}
D\tilde{L}(\bar{m},\lambda_A)(m-\bar{m})= DF(\bar{m})(m-\bar{m}) + \langle \lambda_A, DG_A(\bar{m})(m-\bar{m})\rangle.
\end{equation*}
For all $y \in \R^{N_A}$, we consider the following optimization problem, denoted \eqref{eqLinP}:
\begin{equation*} \label{eqLinP} \tag{$LP(y)$}
V(y)= \inf_{m \in \text{cl}(\mathcal{R})} DF(\bar{m})(m-\bar{m}), \quad \text{subject to: }
DG_A(\bar{m})(m-\bar{m}) + y \leq 0.
\end{equation*}

\emph{Step 1.} We first prove that $V(0)= 0$. For $y= 0$, $\bar{m}$ is feasible (for problem \eqref{eqLinP} with $y=0$), thus $V(0) \leq DF(\bar{m})(\bar{m}-\bar{m})= 0$. Now, let $m \in \text{cl}(\mathcal{R})$ be such that $DG_A(\bar{m})(m-\bar{m}) \leq 0$.
Let $\bar{\theta} > 0$ and $C>0$ be given by Theorem \ref{theoMetricReg}. Let $(\theta_k)_{k \in \mathbb{N}}$ be a convergent sequence with limit 0 taking values in $(0,\bar{\theta}]$.
For all $k$, we set
\begin{equation*}
m_k = (1-\theta_k) \bar{m} + \theta_k m.
\end{equation*}
By Assumption \ref{hypDiff}, we have
\begin{equation*}
G_A(m_k)= G_A(\bar{m}) + \theta_k DG_A(\bar{m})(m-\bar{m}) + o(\theta_k).
\end{equation*}
Since $G_A(\bar{m})= 0$ and $DG_A(\bar{m})(m-\bar{m}) \leq 0$, we have
\begin{equation*}
| G_A(m_k)_+ |_\infty = o(\theta_k).
\end{equation*}
By Theorem \ref{theoMetricReg}, there exists for all $k \in \mathbb{N}$ a real number $\eta_k \in [0,1]$ such that 
\begin{equation*}
\eta_k \leq C | G_A(m_k)_+ |_\infty + \theta_k^2 = o(\theta_k)
\end{equation*}
and such that $G(\hat{m}_k)<0$, where
\begin{align*}
\hat{m}_k= \ & (1-\eta_k) m_k + \eta_k m_0 \notag \\
= \ & (1-\eta_k)(1-\theta_k) \bar{m} + (1-\eta_k)\theta_k m + \eta_k m_0. 
\end{align*}
Thus,
\begin{equation}  \label{eqExpansionOC}
\hat{m}_k- \bar{m}= (1-\eta_k)\theta_k (m-\bar{m}) + \eta_k (m_0-\bar{m}).
\end{equation}
Since $\text{cl}(\mathcal{R})$ is convex (Lemma \ref{lemmaConvexity}), $\hat{m}_k \in \text{cl}(\mathcal{R})$. Therefore, for all $k$, by continuity of $F$ and $G$, there exists $\tilde{m}_k \in \mathcal{R}$ such that $G(\tilde{m}_k) \leq 0$ and such that $F(\tilde{m}_k)-F(\hat{m}_k) \leq \theta_k^2$. Using the differentiability assumption on $F$ (Assumption \ref{hypDiff}), the feasibility of $\tilde{m}_k$, the fact that $\eta_k= o(\theta_k)$ and \eqref{eqExpansionOC}, we obtain that
\begin{align*}
0 \leq \ & \frac{F(\tilde{m}_k)-F(\bar{m})}{\theta_k} \\
\leq \ & \frac{F(\hat{m}_k)-F(\bar{m})}{\theta_k} + \frac{F(\tilde{m}_k)-F(\hat{m}_k)}{\theta_k} \\
= \ & (1-\eta_k) DF(\bar{m})(m-\bar{m}) + \frac{\eta_k}{\theta_k} DF(\bar{m})(m_0-\bar{m}) + o(1) \\
= \ & DF(\bar{m})(m-\bar{m}) + o(1).
\end{align*}
It follows that $DF(\bar{m})(m-\bar{m}) \geq 0$ and finally proves that $V(0)= 0$.

\emph{Step 2.} We compute now the Legendre-Fenchel transform (see \cite[Relation 2.210]{BS00} for a definition) of $V$.
For all $\lambda_A \in \R^{N_A}$, we have
\begin{align*}
V^*(\lambda_A)= \ & \sup_{y \in \R^{N_A}} \big( \langle \lambda_A, y \rangle - V(y) \big) \\
= \ & \sup_{y \in \R^{N_A}} \Big( \langle \lambda_A,y \rangle -  \Big( \inf_{m \in \text{cl}(\mathcal{R})} DF(\bar{m}) (m-\bar{m}),\ \text{subject to: } DG_A(\bar{m})(m-\bar{m}) + y \leq 0 \Big) \Big) \\
= \ & \sup_{\begin{subarray}{c}y \in \R^{N_A}\\ m \in \text{cl}(\mathcal{R}) \end{subarray}} \Big( \langle \lambda_A,y \rangle - DF(\bar{m})(m-\bar{m}), \ \text{subject to: } DG_A(\bar{m}) (m-\bar{m}) + y \leq 0 \Big).
\end{align*}
Using the change of variable $z= DG_A(\bar{m})(m-\bar{m}) + y$, we obtain:
\begin{align*}
V^*(\lambda_A)
= \ & \sup_{\begin{subarray}{c} z \in \R^{N_A} \\ m  \in \text{cl}(\mathcal{R}) \end{subarray}} \Big( \langle \lambda_A, z- DG_A(\bar{m})(m-\bar{m}) \rangle - DF(\bar{m})(m-\bar{m}), \ \text{subject to: } z \leq 0 \Big) \\
= \ & \sup_{m \in \text{cl}(\mathcal{R})} \Big( \Big( \sup_{z \in \R^{N_A}} \langle \lambda_A, z \rangle, \quad \text{subject to: } z \leq 0 \Big) - D\tilde{L}(\bar{m},\lambda_A)(m-\bar{m}) \Big).
\end{align*}
Observing that
\begin{equation*}
\Big( \sup_{z \in \R^{N_A}} \langle \lambda_A, z \rangle, \quad \text{subject to: } z \leq 0 \Big)
= \begin{cases}
\begin{array}{cl}
0 & \text{ if $\lambda_A \geq 0$} \\
+\infty & \text{ otherwise},
\end{array}
\end{cases}
\end{equation*}
we deduce that
\begin{equation} \label{eqConjugate}
V^*(\lambda_A)= \begin{cases}
\begin{array}{cl}
- \inf_{m \in \text{cl}(\mathcal{R})} D \tilde{L}(\bar{m},\lambda_A)(m-\bar{m}) & \text{if $\lambda_A \geq 0$} \\
+ \infty & \text{otherwise}.
\end{array}
\end{cases}
\end{equation}

\emph{Step 3.} Using the convexity of $\text{cl}(\mathcal{R})$ (Lemma \ref{lemmaConvexity}), one can easily show that $V$ is a convex function. Let $\alpha>0$ be such that \eqref{eqRM0} holds. Then, for any $y \in \R^{N_A}$ with $| y |_\infty \leq \alpha$,
\begin{equation*}
DG_A(\bar{m})(m_0-\bar{m}) + y \leq 0
\end{equation*}
and therefore, problem $(LP(y))$ is feasible and $V(y) \leq DF(\bar{m})(m_0-\bar{m})$. 
It follows from \cite[Proposition 2.018, Proposition 2.126]{BS00} that $V$ is continuous in the neighbourhood of 0 and has a non-empty subdifferential $\partial V(0)$ at 0. Let $\lambda_A \in \partial V(0)$, by \cite[Relation 2.232]{BS00}, we have
\begin{equation*}
V^*(\lambda_A)= V(0) + V^*(\lambda) = \langle 0, \lambda \rangle = 0.
\end{equation*}
Thus, by \eqref{eqConjugate}, $\lambda_A \geq 0$ and
\begin{equation*}
\inf_{m \in \text{cl}(\mathcal{R})} D\tilde{L}(\bar{m},\lambda_A)(m-\bar{m}) = 0.
\end{equation*}
The theorem is proved.
\end{proof}

The approach which has been employed to prove Theorem \ref{theoOptiCond} is similar to the one based on relaxation with Young measures for deterministic non-linear optimal control problems. This approach is explained in \cite[Section 3]{BPS13} for example, where Pontryagin's principle is directly deduced from the convexity of the set of reachable linearised states.

The following lemma shows that the value of the standard problem can be used to estimate the loss of optimality of a given probability measure $\hat{m}$ in $\mathcal{R}_{\text{ad}}$ (defined by \eqref{eq:defRad}), when the mappings $F$, $G_1$,...,$G_N$ are convex.
We say that $F$ is convex if for all $\theta \in [0,1]$, for all $m_1$ and $m_2 \in \bar{B}_p(R)$,
\begin{equation} \label{eqConvexityChi}
F(\theta m_1 + (1-\theta) m_2) \leq \theta F(m_1) + (1-\theta) F(m_2).
\end{equation}
The same definition is used for $G_1$,...,$G_N$.
Note that if $F$ is convex, then for all $m_1$ and $m_2 \in \bar{B}_p(R)$,
\begin{equation*}
F(m_2)-F(m_1) \geq DF(m_1)(m_2-m_1).
\end{equation*}

\begin{lemma} \label{lemmaEstimOptimality}
Denote by $\text{\emph{Val}}(P)$ the value of Problem \eqref{eqPb}.
Assume that $F$, $G_1$,...,$G_N$ are convex. Then, for all $\hat{m} \in \mathcal{R}_{\text{ad}}$, for all non-negative $\lambda \in \R^N$ such that the complementarity condition holds at $\hat{m}$, 
the following upper estimate holds:
\begin{equation} \label{eqEstim}
F(\hat{m}) - \text{\emph{Val}}(P) \leq -\inf_{m \in \text{\emph{cl}}(\mathcal{R})} DL(\hat{m},\lambda)(m-\hat{m}).
\end{equation}
\end{lemma}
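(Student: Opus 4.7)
The plan is to exploit the existence of a minimizer $m^*$ of the reformulated problem \eqref{eqMinimizerReformulatedPb} and to use the convexity of $F$ and $G_i$ together with the complementarity condition. Specifically, by Lemma \ref{lemmaExistenceSol} there exists $m^* \in \text{cl}(\mathcal{R}_{\text{ad}})$ with $F(m^*) = \text{Val}(P)$, and by continuity of $G$ on $\bar B_p(R)$ we have $G(m^*) \leq 0$. Moreover $m^* \in \text{cl}(\mathcal{R})$, so we can bound $-\inf_{m \in \text{cl}(\mathcal{R})} DL(\hat m,\lambda)(m-\hat m)$ from below by $DL(\hat m,\lambda)(\hat m-m^*)$. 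Hence the task reduces to proving
\[
F(\hat m) - F(m^*) \leq DL(\hat m, \lambda)(\hat m - m^*) = DF(\hat m)(\hat m - m^*) + \langle \lambda, DG(\hat m)(\hat m - m^*) \rangle.
\]

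Next, from the convexity of $F$ (as noted right before the lemma statement) applied at $m_1 = \hat m$, $m_2 = m^*$, I get $F(\hat m) - F(m^*) \leq DF(\hat m)(\hat m - m^*)$. So it remains to show that $\langle \lambda, DG(\hat m)(\hat m - m^*) \rangle \geq 0$. For each $i$, the convexity of $G_i$ gives $DG_i(\hat m)(\hat m - m^*) \geq G_i(\hat m) - G_i(m^*)$. Multiplying by $\lambda_i \geq 0$ and summing yields
\[
\langle \lambda, DG(\hat m)(\hat m - m^*) \rangle \geq \langle \lambda, G(\hat m) - G(m^*) \rangle.
\]

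To conclude, I check that $\langle \lambda, G(\hat m) - G(m^*) \rangle \geq 0$. The complementarity condition at $\hat m$ means that whenever $\lambda_i > 0$, necessarily $G_i(\hat m) = 0$; combined with $G_i(m^*) \leq 0$ this gives $\lambda_i(G_i(\hat m) - G_i(m^*)) \geq 0$ for every $i$ (it is $0$ when $\lambda_i = 0$). Summing yields the claim and finishes the proof.

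There is no real obstacle here: the proof is a straightforward chain of inequalities, with the only mild care needed being (i) invoking \eqref{eqMinimizerReformulatedPb} (whose value equals $\text{Val}(P)$) to get an attained minimizer $m^*$ lying in $\text{cl}(\mathcal{R})$, and (ii) verifying that the subgradient inequality $F(m_2) - F(m_1) \geq DF(m_1)(m_2 - m_1)$ — explicitly stated in the excerpt and derivable from the directional differentiability Assumption \ref{hypDiff}.1 combined with convexity — applies throughout $\bar B_p(R)$, which covers both $\hat m$ and $m^*$.
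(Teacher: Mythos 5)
Your proof is correct and follows essentially the same argument as the paper: the convexity of $F$ and $G_i$ gives the subgradient inequality $F(m)-F(\hat m)\geq DL(\hat m,\lambda)(m-\hat m)$ for feasible $m$, with the constraint term handled by non-negativity of $\lambda$, feasibility, and complementarity. The only (harmless) difference is that you route the argument through an attained minimizer $m^*$ of \eqref{eqMinimizerReformulatedPb} via Lemma \ref{lemmaExistenceSol}, whereas the paper simply takes the infimum of both sides over $m\in\mathcal{R}_{\text{ad}}$ and then enlarges the feasible set to $\text{cl}(\mathcal{R})$, so no existence result is needed.
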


\begin{proof}
Let $m \in \mathcal{R}_{\text{ad}}$. Since $F$ is convex, we have
\begin{equation} \label{eqEstim1}
F(m)-F(\hat{m}) \geq DF(\hat{m})(m-\hat{m}).
\end{equation}
Denoting by $A$ the active set at $\hat{m}$ and setting
\begin{equation*}
G_A(m) = (G_j(m))_{j \in A} \quad \text{and} \quad \lambda_A= (\lambda_j)_{j \in A},
\end{equation*}
we obtain, using the feasibility of $m$ and the convexity of $G_1$,...,$G_m$ that
\begin{equation*}
0 \geq G_A(m) = G_A(m)-G_A(\hat{m}) \geq DG_A(\hat{m})(m-\hat{m}).
\end{equation*}
Since $\lambda_A \geq 0$, we deduce that
\begin{equation} \label{eqEstim2}
0 \geq \langle \lambda_A,DG_A(\hat{m})(m-\hat{m}) \rangle.
\end{equation}
By the complementarity condition,
\begin{equation} \label{eqEstim3}
\langle \lambda_A,DG_A(\hat{m}) (m-\hat{m}) \rangle
= \langle \lambda, DG(\hat{m})(m-\hat{m}) \rangle.
\end{equation}
Adding \eqref{eqEstim1}, \eqref{eqEstim2}, and \eqref{eqEstim3} together, we obtain that
\begin{equation*}
F(m)-F(\hat{m}) \geq DF(\hat{m})(m-\hat{m}) + \langle \lambda, DG(\hat{m})(m-\hat{m}) \rangle
= DL(\hat{m},\lambda)(m-\hat{m}).
\end{equation*}
Minimizing successively both sides with respect to $m$, we obtain that
\begin{equation*}
\text{Val}(P) - F(\hat{m}) \geq \inf_{m \in \mathcal{R}_{\text{ad}}} DL(\hat{m},\lambda)(m-\hat{m}).
\end{equation*}
Since $\mathcal{R}_{\text{ad}} \subseteq \text{cl}(\mathcal{R})$, we finally obtain that
\begin{equation*}
\text{Val}(P) - F(\hat{m}) \geq \inf_{m \in \text{cl}(\mathcal{R})} DL(\hat{m},\lambda)(m-\hat{m}),
\end{equation*}
which concludes the proof.
\end{proof}

As a corollary, we obtain that the optimality conditions in variational form  are sufficient optimality conditions, in the convex case.

\begin{corollary} \label{coroSuffCond}
Assume that $F$,$G_1$,..,$G_N$ are convex.
Let $\hat{u}$ be a feasible control process satisfying the optimality conditions in variational form.
Then, $\hat{u}$ is a solution to \eqref{eqPb}.
\end{corollary}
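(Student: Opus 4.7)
The plan is to deduce the corollary as an immediate consequence of Lemma \ref{lemmaEstimOptimality} combined with the variational inequality that comes with the optimality conditions. Since $\hat{u}$ is feasible, $\hat{m} := m_T^{0,Y_0,\hat{u}}$ lies in $\mathcal{R}_{\text{ad}}$. By hypothesis there exists a non-negative $\lambda \in \R^N$ satisfying the complementarity condition at $\hat{m}$ and such that the variational inequality
\begin{equation*}
DL(\hat{m},\lambda)(m-\hat{m}) \geq 0, \quad \forall m \in \text{cl}(\mathcal{R})
\end{equation*}
holds. All hypotheses of Lemma \ref{lemmaEstimOptimality} are therefore in place.

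From the variational inequality, the infimum on the right-hand side of \eqref{eqEstim} is non-negative. The lemma then yields $F(\hat{m}) - \text{Val}(P) \leq 0$. Combined with the trivial inequality $F(\hat{m}) \geq \text{Val}(P)$, which follows from feasibility of $\hat{m}$ and the reformulation \eqref{eqEquivalentPb} of \eqref{eqPb}, we obtain $F(\hat{m}) = \text{Val}(P)$, so $\hat{u}$ is a solution to \eqref{eqPb}.

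There is no real obstacle here: the corollary is essentially a one-line consequence of Lemma \ref{lemmaEstimOptimality}, whose proof already absorbed all the convexity arguments. The only thing to verify carefully is that the variational inequality \eqref{eqVarIneqBis}, which by definition of the optimality conditions in variational form is part of the hypothesis, is exactly what is needed to make the right-hand side of \eqref{eqEstim} non-positive.
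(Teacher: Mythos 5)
Your proof is correct and follows exactly the paper's argument: apply Lemma \ref{lemmaEstimOptimality} and observe that the variational inequality \eqref{eqVarIneqBis} forces the right-hand side of \eqref{eqEstim} to be non-positive (in fact zero, since $m=\hat{m}$ is admissible in the infimum), hence $F(\hat{m}) \leq \text{Val}(P)$. Nothing further is needed.
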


\begin{proof}
In this situation, the right-hand side of inequality \eqref{eqEstim} is equal to 0, which directly proves the optimality of $\hat{u}$.
\end{proof}

We finish this section with a corollary dealing with stochastic optimal control problems with an expectation constraint.

\begin{corollary}
Let $f\colon \R^n \rightarrow \R$, $g\colon \R^n \rightarrow \R^N$ be two continuous functions, dominated by $|x|^p$. Assume that there exists $u_0 \in \mathcal{U}_0(Y_0)$ such that $\mathbb{E} \big[ g(X_T^{0,Y_0,u_0}) \big] < 0$. Then, any $u \in \mathcal{U}_0(Y_0)$ is a solution to the following problem:
\begin{equation} \label{eqProbaContr}
\inf_{u \in \mathcal{U}_0(Y_0)} \mathbb{E}\big[ f(X_T^{0,Y_0,u}) \big] \quad \text{subject to: }
\mathbb{E}\big[ g(X_T^{0,Y_0,u}) \big] \leq 0
\end{equation}
if and only if $u$ is feasible and there exists $\lambda \geq 0$ such that $\mathbb{E}\big[ g_i(X_T^{0,Y_0,u}) \big] < 0 \Longrightarrow \lambda_i = 0$, for all $i=1,...,N$, and such that ${u}$ is a solution to \eqref{eqLinearizedPb} with
\begin{equation*}
\phi(x)= f(x) + \langle \lambda, g(x) \rangle.
\end{equation*}
\end{corollary}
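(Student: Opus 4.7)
The plan is to view this corollary as the specialization of Theorem \ref{theoOptiCond} (for necessity) and Corollary \ref{coroSuffCond} (for sufficiency) to the linear mappings
\begin{equation*}
F(m) = \int_{\R^n} f(x) \dd m(x), \qquad G(m) = \int_{\R^n} g(x) \dd m(x).
\end{equation*}
I will first verify that these $F$ and $G$ satisfy the continuity, differentiability, and qualification hypotheses of the preceding theory, and then invoke the two results directly.

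First, I would check continuity and differentiability. Since $f$ and $g$ are dominated by $|x|^p$, Lemma \ref{lemmaContinuityDominatedCost} gives the continuity of $F$ and $G$ on $\bar{B}_p(R)$ for the $d_1$-distance, so Assumption \ref{hypContinuity} holds. For any $m \in \bar{B}_p(R)$, the linear forms $DF(m)\tilde{m} = \int f \dd\tilde{m}$ and $DG(m)\tilde{m} = \int g \dd\tilde{m}$ are independent of $m$ and are exactly represented by $f$ and $g$ respectively; both are continuous and dominated by $|x|^p$. The remainder estimates \eqref{eqDirectionalDerivative} and \eqref{eqDirDer2} are trivially satisfied with zero error, so Assumption \ref{hypDiff} holds. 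The Slater condition (Assumption \ref{hypQualification}) is verified by taking $m_0 = m_T^{0,Y_0,u_0} \in \mathcal{R} \subseteq \text{cl}(\mathcal{R})$: by linearity of $G$ and the fact that $G_A(\bar{m}) = 0$,
\begin{equation*}
DG_A(\bar{m})(m_0 - \bar{m}) = G_A(m_0) - G_A(\bar{m}) = \mathbb{E}\big[g_A(X_T^{0,Y_0,u_0})\big] < 0.
\end{equation*}

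For the necessity direction, assuming $u$ solves \eqref{eqProbaContr} with $\bar{m} = m_T^{0,Y_0,u}$, I apply Theorem \ref{theoOptiCond}: there exists $\lambda \geq 0$ satisfying complementarity at $\bar{m}$ such that $u$ is a solution to the standard problem \eqref{eqLinearizedPb} with $\phi(x) = DL(\bar{m},\lambda,x)$. Because $F$ and $G$ are linear with representatives $f$ and $g$, one has $DL(\bar{m},\lambda,x) = f(x) + \langle \lambda, g(x) \rangle$ up to an additive constant, which does not affect the argmin of the standard problem. The complementarity $G_i(\bar{m}) < 0 \Rightarrow \lambda_i = 0$ reads exactly $\mathbb{E}[g_i(X_T^{0,Y_0,u})] < 0 \Rightarrow \lambda_i = 0$.

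For the sufficiency direction, observe that linear mappings are convex, so Corollary \ref{coroSuffCond} applies. It suffices to show that a feasible $u$ with the stated properties satisfies the variational form \eqref{eqVarIneqBis}. If $u$ solves \eqref{eqLinearizedPb} with $\phi = f + \langle \lambda, g\rangle$, then by the reformulation recalled in Subsection \ref{subSecStandardPb}, $\int \phi \dd \bar{m} \leq \int \phi \dd m$ for all $m \in \text{cl}(\mathcal{R})$, which is precisely $DL(\bar{m},\lambda)(m - \bar{m}) \geq 0$ for all $m \in \text{cl}(\mathcal{R})$. Combined with feasibility and the assumed complementarity, this is the variational form of the optimality conditions, and Corollary \ref{coroSuffCond} concludes.

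There is no real obstacle here: the corollary is essentially a dictionary translation of the main theorem to the linear setting, with linearity automatically providing the differentiability assumptions and convertible form for the Lagrangian derivative, and the Slater-type hypothesis on $u_0$ supplying the qualification.
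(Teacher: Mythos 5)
Your proposal is correct and follows essentially the same route as the paper: specialize to $F(m)=\int f\dd m$, $G(m)=\int g\dd m$, note that these linear mappings satisfy the continuity, differentiability, convexity, and qualification hypotheses (the last via $m_0=m_T^{0,Y_0,u_0}$), and then invoke Theorem \ref{theoOptiCond} for necessity and Corollary \ref{coroSuffCond} for sufficiency. The only difference is that you spell out the verifications that the paper leaves as brief remarks, including the harmless additive-constant ambiguity in the representative of $DL$.
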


\begin{proof}
Setting $F(m)= \int_{\R^n} f(x) \dd m(x)$ and $G(m)= \int_{\R^n} g(x) \dd m(x)$, we obtain that problem \eqref{eqProbaContr} falls into the general class of problems studied in the article. The functions $F$ and $G$ satisfy the required regularity assumptions (see Subsection \ref{subsection:disscussion}). Note that $DF(m,x)= f(x)$ and that $DG(m,x)= g(x)$ (up to a constant). The existence of $u_0$ ensures that the qualification condition is satisfied. The mappings $F$ and $G$ are clearly convex, therefore, the optimality conditions in variational form are necessary and sufficient, by Theorem \ref{theoOptiCond} and Corollary \ref{coroSuffCond}.
\end{proof}

\section{Numerical method and results} \label{sectionNumerics}

\subsection{Augmented Lagrangian Method}

We provide in this section a numerical method for solving problem \eqref{eqPb} and give results for two academical problems. The method is an augmented Lagrangian method combined with a projected-gradient-type algorithm.

Let us begin with a rough description of the method, consisting of Algorithms \ref{algo1} and \ref{algo2} (page \pageref{algo1}). The second algorithm is a building block of the first one. The augmented Lagrangian method is used to solve the following problem:
\begin{equation} \label{eqPbWithSlack}
\inf_{m \in \text{cl}(\mathcal{R}),\, s \in \R^N} F(m), \quad \text{subject to: } G(m) + s = 0,\ s \geq 0.
\end{equation}
At the end of the while loop of Algorithm \ref{algo1} (line 19), the method provides a probability measure $m_k \in \text{cl}(\mathcal{R})$ satisfying approximately the optimality conditions in variational form for some Lagrange multiplier $\lambda_k$. At this stage, the method has not computed a control process $\bar{u}$ such that $m_k= m_T^{0,Y_0,\bar{u}}$. The ultimate step of the algorithm (line 20) aims at recovering such a control $\bar{u}$ by solving the standard problem \eqref{eqLinearizedPb} with $\phi= DL(m_k,\lambda_k)$. One has to check a posteriori that $\bar{u}$ approximately satisfies the optimality conditions in variational form with associated Lagrange multiplier $\bar{\lambda}= \lambda_k$.

Let us go into the details of the method. The augmented Lagrangian $L_A$ associated with \eqref{eqPbWithSlack} is given by
\begin{equation*}
L_A\colon (m,s,\lambda,c) \in \text{cl}(\mathcal{R}) \times \R^N \times \R^N \times \R_+
\mapsto F(m) + \langle \lambda, G(m) + s \rangle + \frac{c}{2} | G(m) + s |^2.
\end{equation*}
The employed norm in the above definition is the Euclidean norm.
Note that the constraints $s \geq 0$ and $m \in \text{cl}(\mathcal{R})$ are not dualized, since they will be ensured by the projected gradient method.
The mapping $L_A(\cdot,s,\lambda,c)$ is differentiable (in the sense of Assumption \ref{hypDiff}.1) with respect to $m$, with
\begin{align*}
DL_A(m,s,\lambda,c)\hat{m}= \ & DF(m)\hat{m} + \langle \lambda, DG(m) \hat{m} \rangle +  \langle c (G(m)+s), DG(m)\hat{m} \rangle \\
= \ & DL(m,\lambda + c(G(m)+s))\hat{m}, \quad \forall \hat{m} \in \widehat{\mathcal{M}}_p(\R^n).
\end{align*}
A representative of $\hat{m} \mapsto DL_A(m,s,\lambda,c) \hat{m}$ is therefore given by
\begin{align}
DL_A(m,s,\lambda,c,x) = \ & DF(m) + \langle \lambda + c (G(m)+s), DG(m,x) \rangle \notag \\
= \ & DL(m,\lambda + c (G(m)+s),x), \label{eqDerLaug}
\end{align}
up to a constant.
The partial gradient of $L_A$ with respect to $s$ is given by
\begin{equation*}
\nabla_s L_A(m,s,\lambda_c) = \lambda + c(G(m)+s).
\end{equation*}
Let us first focus on Algorithm \ref{algo2}. It aims at solving the following problem:
\begin{equation} \label{eqMinLagAug}
\inf_{m \in \text{cl}(\mathcal{R}),\, s \geq 0} L_A(m,s,\lambda,c),
\end{equation}
for values of the Lagrange multiplier $\lambda$ and the penalty parameter $c>0$ given as input variables. 
The algorithm constructs a sequence $(m_\ell)_{\ell =0,1,...}$ of probability distributions in $\text{cl}(\mathcal{R})$ and a sequence $(s_\ell)_{\ell= 0,1,...}$ of non-negative slack variables in the while loop. At each iteration $\ell$ of the while loop, a kind of line-search (line 10) is performed: the mapping $L_A(\cdot,\cdot,\lambda,c)$ is minimized over the set
\begin{equation} 
\big\{ \big( m_{\ell}(\theta), s_\ell(\theta) \big) \,|\, \theta \in [0,1] \big\},
\end{equation}
where
\begin{equation*}
m_{\ell}(\theta)= (1-\theta)m_\ell + \theta \tilde{m}_\ell, \quad
s_{\ell}(\theta)= \max(s_\ell + \theta \delta s_\ell,0)
\end{equation*}
and where $\delta s_\ell= - \nabla L_A(m_k,s_k,\lambda,c)$. The $\max$ operator in the above expression must be understood coordinatewise, it is nothing but a projection of $s_\ell + \theta \delta s_\ell$ on $\R_{\geq 0}^N$. The probability distribution $\tilde{m}_{\ell}$ is chosen as a solution to
\begin{equation*}
\inf_{m \in \text{cl}(\mathcal{R})} DL_A(m_\ell,s_\ell,\lambda,c)(m-m_{\ell}).
\end{equation*}
The value of the above problem is non-positive.
The measure $\tilde{m}_\ell -m_{\ell}$ can therefore be seen as a descent direction for the variable $m$.
The next iterate of the algorithm is given by
\begin{equation*}
m_{\ell +1}= (1-\theta_{\ell}) m_{\ell} + \theta_{\ell} \tilde{m}_\ell \quad \text{and} \quad
s_{\ell+1}= \max(s_\ell + \theta_{\ell} \delta s_{\ell}, 0),
\end{equation*}
where $\theta_\ell$ minimizes $L_A(m_\ell(\theta),s_\ell(\theta),\lambda,c)$ over $[0,1]$.
Note that Lemma \ref{lemmaConvexity} plays here a crucial role: it guarantees that $m_{\ell+1} \in \text{cl}(\mathcal{R})$. Let us note that Algorithm \ref{algo2} is not, strictly speaking, a projected gradient method, since $m_{\ell +1}$ is not obtained as a projection on $\text{cl}(\mathcal{R})$.

Let us discuss the criterion used in Algorithm \ref{algo2}. It is related to the optimality conditions associated with problem \eqref{eqMinLagAug}: if $(\bar{m},\bar{s})$ is a solution to \eqref{eqMinLagAug}, then, by Theorem \ref{theoOptiCond},
\begin{equation} \label{eqOpCondLa0}
\inf_{m \in \text{cl}(\mathcal{R})} DL_A(\bar{m},\bar{s},\lambda,c)(m-\bar{m})= 0,
\end{equation}
moreover, the optimality of $\bar{s}$ implies that
\begin{equation} \label{eqOpCondLa}
\nabla_s L_A(\bar{m},\bar{s},\lambda,c) \geq 0 \quad \text{and} \quad
\Big[ s_i> 0 \Longrightarrow \nabla_{s_i} L_A(\bar{m},\bar{s},\lambda,c) = 0, \ \forall i=1,...,N \Big].
\end{equation}
Algorithm \ref{algo2} stops when the variable $\varepsilon_{\ell}$ (defined line 6 in the algorithm) is smaller than $\omega$, i.e.\@ when both
\begin{align}
-\inf_{m \in \text{cl}(\mathcal{R})} DL_A(m_{\ell},s_{\ell},\lambda,c)(m-m_{\ell}) \leq \ & \omega, \label{eqOpMLag} \\
| s_{\ell}-\max(s_{\ell} + \delta s_{\ell}, 0) |_\infty \leq \ & \omega. \label{eqOpMLag2}
\end{align}
The inequality \eqref{eqOpMLag} ensures that the optimality condition \eqref{eqOpCondLa0} is approximately satisfied (note that the left-hand side of \eqref{eqOpMLag} is always non-negative).
The inequality \eqref{eqOpMLag2} implies that
\begin{equation*}
s_{\ell}- \max(s_{\ell} + \delta s_{\ell}, 0) \geq -\omega \mathbf{1},
\end{equation*}
thus that
\begin{equation*}
s_{\ell} + \delta s_{\ell} \leq \max(s_{\ell} + \delta s_{\ell},0) \leq s_\ell + \omega \mathbf{1}
\end{equation*}
and finally that
\begin{equation} \label{eqSignCondition}
\nabla_{s} L_A(m_\ell,s_\ell,\lambda,c) = -\delta s_{\ell} \geq - \omega \mathbf{1}.
\end{equation}
Moreover, for all $i=1,...,N$, if $s_{\ell,i} > \omega$,
then $\max(s_{\ell,i} + \delta s_{\ell,i},0) > 0$ (by \eqref{eqOpMLag2}) and therefore,
\begin{equation*}
s_{\ell,i} + \delta s_{\ell,i} = \max (s_{\ell,i} + \delta s_{\ell,i},0)
\end{equation*}
and finally,
\begin{equation} \label{eqCompCond}
\omega \geq | s_{\ell,i}-\max(s_{\ell,i} + \delta s_{\ell,i}, 0) | = |\delta s_{\ell,i}|
= |\nabla_{s_i} L_A(m_\ell,s_\ell,\lambda,c)|.
\end{equation}
Inequalities \eqref{eqSignCondition} and \eqref{eqCompCond} therefore ensure that the optimality condition \eqref{eqOpCondLa} is approximately satisfied.

Let us come back to Algorithm \ref{algo1}. It constructs a sequence $(m_k)_{k= 0,1,...}$ of probability distributions in $\text{cl}(\mathcal{R})$, a sequence of non-negative slack variables $(s_k)_{k= 0,1,...}$ and a sequence of Lagrange multipliers $(\lambda_k)_{k= 0,1,...}$. Two sequences of tolerances are also constructed, $(\eta_k)_{k=0,1,...}$ and $(\omega_k)_{k=0,1,...}$, as well as a sequence of penalty parameters $(c_k)_{k= 0,1,...}$.
At the iteration $k$, the augmented Lagrangian is minimized by using Algorithm \ref{algo2} with $\lambda= \lambda_k$ and the current tolerance $\omega_k$. The triplet $(m_{k+1},s_{k+1},\varepsilon_{k+1})$ is the output of Algorithm \ref{algo2}.
Three cases are then considered.
\begin{itemize}
\item If $| G(m_{k+1}) + s_{k+1} | \leq \eta_k$, then we consider that the penalty term $c_k$ is large enough. The Lagrange multiplier is updated as follows:
\begin{equation*}
\lambda_{k+1}= \nabla_s L_A(m_{k+1},s_{k+1},\lambda_k,c_k)= \lambda_k + c_k(G(m_k)+s_k).
\end{equation*}
This update rule is motivated by \eqref{eqDerLaug}.
\begin{itemize}
\item If moreover $| G(m_{k+1}) + s_{k+1} | \leq \eta_*$ and $\varepsilon_{k+1} \leq \omega_*$, then the algorithm stops and we have that
\begin{align*}
& \inf_{m \in \text{cl}(\mathcal{R})} DL_A(m_{k+1},s_{k+1},\lambda_k,c_k)(m-m_{k+1}) \\
& \qquad \qquad = \inf_{m \in \text{cl}(\mathcal{R})} DL(m_{k+1},\lambda_{k+1})(m-m_{k+1})
\geq  -\omega_*.
\end{align*}
Moreover, by \eqref{eqSignCondition}, $\lambda_{k+1} \geq - \omega_* \mathbf{1}$ and for all $i=1,...,N$, if $G_i(m_{k+1}) > \eta_* + \omega_*$, then
\begin{equation*}
s_{k+1,i} \geq |G_i(m_{k+1})|-|G_i(m_{k+1})+s_{k+1,i}| \geq \omega_*
\end{equation*}
and therefore $|\lambda_{k+1,i}| \leq \omega_*$ by \eqref{eqCompCond}. The probability distribution $m_{k+1}$ satisfies approximately the optimality conditions in variational form, with associated Lagrange multiplier $\lambda_{k+1}$.
\item Otherwise, the penalty term is unchanged and the tolerances $\eta_k$ and $\omega_k$ are tightened (line 12).
\end{itemize}
\item  If $| G(m_{k+1}) + s_{k+1} | > \omega_k$, then the penalty term $c_k$ is regarded as too weak, it is therefore increased. The estimate of the Lagrange multiplier $\lambda$ is unchanged and the tolerances are re-initialized (line 16).
\end{itemize}

\begin{remark} \label{remarkDifficulty}
In practice, the main difficulty in the method is the resolution of the standard problem. It consists of two phases: in a backward phase, the Hamilton-Jacobi-Bellman equation associated with the standard problem must be solved (see subsection \ref{subSecStandardPb}). It provides an optimal control (for the standard problem) in a feedback form. One must then compute the probability distribution $(m_t)_{t \in [0,T]}$ which is associated, in a forward phase.
\end{remark}

\begin{remark} \label{remarkLancelot}
Algorithm 1 is taken from \cite[Algorithm 17.4]{NW06} (see also \cite{CGT92}). Note that the update rules for $\eta_k$ and $\omega_k$ have been modified, in order to avoid too strong variations of the parameters $\eta_k$ and $\omega_k$.
\end{remark}

\begin{algorithm}
Input: $m_{\text{init}} \in \text{cl}(\mathcal{R})$, $s_{\text{init}} \in \R_{\geq 0}^N$, $\lambda_{\text{init}} \in \R^N$, $\eta_*>0$, $\omega_*>0$\;
Set $k=0$, $c_0= 10$, $\omega_0= 1/c_0$, $\eta_0= 1/c_0^{0.1}$\;
Set $m_0= m_{\text{init}}$, $\lambda_0= \lambda_{\text{init}}$, $s_0= s_{\text{init}}$\;
Set $T= \text{true}$\;
\While{$T= \text{\emph{true}}$}{
Compute $(m_{k+1},s_{k+1},\varepsilon_{k+1})$ as an output of Algorithm \ref{algo2} with input $(m_k,s_k,\lambda_k,c_k,\omega_k)$\;
\eIf{$| G(m_{k+1}) + s_{k+1} | \leq \eta_k$}{
Set $\lambda_{k+1}= \lambda_k + c_k(G(m_{k+1})+s_{k+1})$\;
\eIf{$| G(m_{k+1})+s_{k+1} | \leq \eta_*$ \text{and} $\varepsilon_{k+1} \leq \omega_*$}{
Set $T= \text{false}$\;
}
{
Set $c_{k+1}= c_k$, $\eta_{k+1}= \eta_k/10^{0.1}$, $\omega_{k+1}= \omega_k/10$\;
}
}
{
Set $\lambda_{k+1}= \lambda_k$\;
Set $c_{k+1}= 10 c_k$, $\eta_{k+1}= 1/c_{k+1}^{0.1}$, $\omega_{k+1}= 1/c_{k+1}$\;
}
Set $k= k+1$\;
}
Compute a solution $\bar{u}$ to \eqref{eqLinearizedPb} with $\phi(x)= DL(m_k,\lambda_k,x)$\;
Output: $\bar{u}$, $\bar{\lambda}= \lambda_k$.
\caption{Augmented Lagrangian method for solving Problem \eqref{eqPb}}
\label{algo1}
\end{algorithm}

\begin{algorithm}
Input: $m_{\text{init}} \in \text{cl}(\mathcal{R})$, $s_{\text{init}} \in \R_{\geq 0}^N$, $\lambda \in \R^N$, $c>0$, $\omega>0$\;
Set $\ell= 0$, $m_0= m_{\text{init}}$, $s_0= s_{\text{init}}$, $T= \text{true}$\;
\While{$T=\text{\emph{true}}$}{
Compute a solution $\tilde{m}_{\ell}$ to: $\inf_{m \in \text{cl}(\mathcal{R})} DL_A(m_{\ell},s_{\ell},\lambda,c)(m-m_\ell)$\;
Set $\delta s_{\ell}=  -(\lambda + c(G(m_\ell)+ s_\ell))$\;
Set $\varepsilon_{\ell}= \max\big( DL_A(m_{\ell},s_{\ell},\lambda,c)(m_\ell-\tilde{m}_\ell), | s_{\ell} - \max(s_\ell + \delta s_\ell, 0) |_\infty \big)$\;
\eIf{$\varepsilon_{\ell} \leq \omega$}{
Set $T= \text{false}$ \;
}
{
Compute a solution $\theta_{\ell}$ to: $\inf_{\theta \in [0,1]} L_A \big( (1-\theta) m_{\ell} + \theta \tilde{m}_\ell,\max(s_{\ell}+ \theta \delta s_\ell, 0), \lambda,c \big)$\;
Set $m_{\ell+1}= (1-\theta_{\ell}) m_{\ell} + \theta_{\ell} \tilde{m}_\ell$, $s_{\ell +1} = \max(s_{\ell}+ \theta \delta s_\ell, 0)$\;
Set $\ell = \ell+1$\;
}
}
Output: $m_{\ell}$, $s_{\ell}$, $\varepsilon_{\ell}$.
\caption{Projected gradient method for minimizing $L_A(\cdot,\cdot,\lambda,c)$ on $\text{cl}(\mathcal{R}) \times \R_{\geq 0}^N$}
\label{algo2}
\end{algorithm}

\subsection{Convergence analysis}

We investigate in this subsection the termination of Algorithms \ref{algo1} and \ref{algo2}. Our analysis follows the main lines of \cite[Chapters 3 and 17]{NW06}.
Let us mention that we do not tackle in this subsection the issues related to discretization.
In general, termination proofs for line-search methods require that the function to be minimized is differentiable with a Lipschitz-continuous gradient. A similar assumption is therefore considered below.

\begin{assumption} \label{hypLipGrad}
The mappings $F$ and $G$ are differentiable in the sense of Assumption \ref{hypDiff}. Moreover, there exist two constants $K_1>0$ and $K_2 > 0$ such that for all $m_1$, $m_2$, $m_3$, and $m_4 \in \bar{B}_p(R)$,
\begin{equation}
\begin{cases}
\begin{array}{l}
|(DF(m_2)-DF(m_1))(m_4-m_3)| \leq K_1 d_1(m_1,m_2) d_1(m_3,m_4), \\
|(DG(m_2)-DG(m_1))(m_4-m_3)| \leq K_1 d_1(m_1,m_2) d_1(m_3,m_4),
\end{array}
\end{cases}
\label{eqLipGrad1}
\end{equation}
and such that
\begin{align*}
& |DF(m_1)(m_4-m_3)| \leq K_2 d_1(m_3,m_4), \\
& |DG(m_1)(m_4-m_3)| \leq K_2 d_1(m_3,m_4).
\end{align*}
\end{assumption}

\begin{remark}
\begin{enumerate}
\item It can be easily checked that inequality \eqref{eqDirDer2} is a consequence of \eqref{eqLipGrad1}.
\item It can be easily proved that under Assumption \ref{hypLipGrad}, $F$ and $G$ are Lipschitz continuous for the distance $d_1$, with modulus $K_2$.
\end{enumerate}
\end{remark}

Before starting the convergence analysis, we give an example of a mapping satisfying Assumption \ref{hypLipGrad}.

\begin{lemma}
If $F$ and $G$ are of the form $m \in \bar{B}_p(R) \mapsto \Psi(\int \phi \dd m)$ where $\Psi$ is differentiable with a Lipschitz continuous derivative on bounded sets, and where $\phi$ is globally Lipschitz continuous, then Assumption \ref{hypLipGrad} holds.
\end{lemma}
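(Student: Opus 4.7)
The plan is to verify the two inequalities in Assumption \ref{hypLipGrad} by plugging in the explicit representative of the derivative. Recall from Subsection \ref{subsection:disscussion} that for a mapping of the form $H(m) = \Psi(\int_{\R^n} \phi \dd m)$, the derivative acts as
\begin{equation*}
DH(m)(\tilde m) = D\Psi\Bigl(\int_{\R^n} \phi \dd m\Bigr) \int_{\R^n} \phi \dd \tilde m, \quad \forall \tilde m \in \widehat{\mathcal{M}}_p(\R^n).
\end{equation*}
I will carry out the argument for $F$; the case of $G$ is identical component-wise, and one takes the maximum of the resulting constants.

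First, I would check that the set $S := \{\int_{\R^n} \phi \dd m \,|\, m \in \bar{B}_p(R)\}$ is bounded in the range space of $\phi$. Since $\phi$ is globally Lipschitz with some constant $L$, we have the pointwise bound $|\phi(x)| \leq |\phi(0)| + L|x|$, and since $p \geq 1$, Jensen's (or Hölder's) inequality applied to the $p$-th moment bound of elements of $\bar{B}_p(R)$ gives a uniform bound on $\int |\phi| \dd m$. Therefore $S$ is contained in some bounded set $B$. By assumption, $D\Psi$ is bounded (by some constant $M$) and Lipschitz continuous (with some constant $K$) on $B$.

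Next, the essential observation is that the dual formula \eqref{eqDualWasserstein} yields, for any $L$-Lipschitz function $\phi$ and any $m_3, m_4 \in \bar{B}_p(R)$,
\begin{equation*}
\Bigl| \int_{\R^n} \phi \dd(m_4 - m_3) \Bigr| \leq L \, d_1(m_3, m_4),
\end{equation*}
because $\phi/L$ belongs to $\text{1-Lip}(\R^n)$. Using this bound twice, combined with the chain-rule expression above, I compute
\begin{align*}
(DF(m_2) - DF(m_1))(m_4 - m_3) = \Bigl[D\Psi\Bigl(\int \phi \dd m_2\Bigr) - D\Psi\Bigl(\int \phi \dd m_1\Bigr)\Bigr] \int \phi \dd(m_4 - m_3),
\end{align*}
whence
\begin{equation*}
\bigl| (DF(m_2) - DF(m_1))(m_4 - m_3) \bigr| \leq K \Bigl| \int \phi \dd(m_2 - m_1) \Bigr| \cdot \Bigl| \int \phi \dd(m_4 - m_3) \Bigr| \leq K L^2 \, d_1(m_1,m_2) \, d_1(m_3,m_4),
\end{equation*}
which gives the first inequality with $K_1 = K L^2$. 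For the second, I estimate directly
\begin{equation*}
|DF(m_1)(m_4 - m_3)| \leq M \cdot L \, d_1(m_3,m_4),
\end{equation*}
yielding $K_2 = M L$. Taking the maximum of the constants obtained for $F$ and for each component of $G$ completes the proof.

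No step is genuinely delicate; the only point requiring a little care is the verification that the image set $S$ lands in a region where $D\Psi$ enjoys its Lipschitz property, which reduces to controlling $\int |x| \dd m$ by the $p$-th moment bound. Once this is in place, the two estimates follow mechanically from the dual representation of $d_1$ and the chain rule.
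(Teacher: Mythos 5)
Your proof is correct and follows essentially the same route as the paper's: both use the explicit chain-rule representative of the derivative, bound $\int\phi\dd m$ uniformly on $\bar{B}_p(R)$ to localize the Lipschitz property of $D\Psi$, and then invoke the dual representation \eqref{eqDualWasserstein} twice to obtain the two estimates, arriving at the same constants ($K_1 = K L^2$, $K_2 = ML$ matching the paper's $K_a^2 K_c$ and $K_a K_d$). No substantive difference.
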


\begin{proof}
Recall the expression of the derivative, given in this situation by \eqref{eqDiffFunctionExp}.
Let $K_a$ be the Lipschitz modulus of $\phi$. For $m \in \bar{B}_p(R)$, by H\"older's inequality,
\begin{align*}
\Big| \int_{\R^n} \phi(x) \dd m(x) \Big|
\leq \ & \int_{\R^n} | \phi(0) | + K_a | x | \dd m(x) \\
\leq \ & | \phi(0) | + K_a \int_{\R^n} | x |^p \dd m(x)\\
\leq \ & | \phi(0) | + K_a R =: K_b.
\end{align*}
Let $K_c$ be the Lipschitz modulus of $D\Psi$ on the ball of centre 0 and radius $K_b$. Let $K_d$ be a bound of $| D\Psi |$ on the same ball. Using the dual representation of the Wasserstein distance $d_1$, we obtain that
\begin{equation*}
\Big| \int_{\R^n} D\Psi \big( {\textstyle\int } \phi \dd m_1 \big) \phi(x) \dd \big( m_3(x) - m_4(x) \big) \Big|
\leq K_a K_d \, d_1(m_3,m_4),
\end{equation*}
since $x \in \R^n \mapsto D\Psi \big( {\textstyle\int } \phi \dd m_1 \big) \phi(x)$ is Lipschitz continuous with modulus $K_aK_d$. Similarly, we also obtain that
\begin{align*}
\Big| \int_{\R^n} \big( D\Psi\big( {\textstyle\int } \phi \dd m_2 \big) - D\Psi\big( {\textstyle\int } \phi \dd m_1 \big) \big) \phi(x) \dd (m_3(x) - m_4(x)) \Big|
\leq \ & K_a^2 K_c d_1(m_1,m_2) d_1(m_3,m_4),
\end{align*}
since
\begin{equation*}
\big| D\Psi\big( {\textstyle\int } \phi \dd m_2 \big) - D\Psi\big( {\textstyle\int } \phi \dd m_1 \big) \big| \leq K_a K_c d_1(m_1,m_2).
\end{equation*}
Thus, Assumption \ref{hypLipGrad} holds with $K_1= K_a^2 K_c$ and $K_2= K_a K_d$.
\end{proof}

The following lemma provides some useful properties dealing with the Lipschitz-continuity of the derivatives of the augmented Lagrangian.

\begin{lemma} \label{lemmaRegLA}
Under Assumption \ref{hypLipGrad}, for all $\lambda \in \R^N$, for all $c > 0$, for all bounded sets $\mathcal{S}$, there exist three constants $K_3$, $K_4$ and $K_5 > 0$ such that for all $m_1$ and $m_2 \in \text{\emph{cl}}(\mathcal{R})$, for all $s_1$ and $s_2 \in \mathcal{S}$,
\begin{align*}
| \nabla_s L_A(m_1,s_1,\lambda,c) | \leq \ & K_3, \\
| \nabla_s L_A(m_2,s_2,\lambda_c)- \nabla_s L_A(m_1,s_1,\lambda,c) | \leq \ & K_4(d_1(m_1,m_2)+ | s_2-s_1|), \\
 | (DL_A(m_2,s_2,\lambda,c)-DL_A(m_1,s_1,\lambda,c))(m_4-m_3) | 
 \leq \ & K_5 (d_1(m_1,m_2)+ | s_2-s_1 |)d_1(m_3,m_4).
\end{align*}
\end{lemma}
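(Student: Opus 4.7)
The plan is a direct computation from the explicit formulas
\[
\nabla_s L_A(m,s,\lambda,c) = \lambda + c(G(m)+s),
\]
\[
DL_A(m,s,\lambda,c)\hat m = DF(m)\hat m + \langle \lambda + c(G(m)+s),\, DG(m)\hat m\rangle,
\]
recorded earlier in the text, combined with Assumption \ref{hypLipGrad} and the two observations in the remark following it: (a) $F$ and $G$ are $K_2$-Lipschitz on $\bar B_p(R)$ in the $d_1$-metric, and (b) inequality \eqref{eqDirDer2} is a consequence of \eqref{eqLipGrad1}. Throughout, I use that $\text{cl}(\mathcal{R}) \subseteq \bar B_p(R)$ has diameter $D<\infty$ (cf.\ \eqref{eqDiameter}), hence $G$ is bounded on $\text{cl}(\mathcal{R})$, say $|G(m)| \le M_G$, and that $|s|\le M_S$ for $s\in\mathcal{S}$ by hypothesis.

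For the first bound, I would simply write $|\nabla_s L_A(m_1,s_1,\lambda,c)| \le |\lambda| + c(|G(m_1)|+|s_1|) \le |\lambda| + c(M_G+M_S) =: K_3$.

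For the second bound, I would subtract and use the Lipschitz continuity of $G$:
\[
\nabla_s L_A(m_2,s_2,\lambda,c) - \nabla_s L_A(m_1,s_1,\lambda,c) = c\bigl(G(m_2)-G(m_1)\bigr) + c(s_2-s_1),
\]
so the desired estimate holds with $K_4 = c\max(K_2,1)$ by the remark.

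For the third bound, I would split the difference into four pieces by adding and subtracting intermediate terms:
\[
\bigl(DL_A(m_2,s_2,\lambda,c) - DL_A(m_1,s_1,\lambda,c)\bigr)(m_4-m_3) = T_1 + T_2 + T_3 + T_4,
\]
where $T_1 = (DF(m_2)-DF(m_1))(m_4-m_3)$, $T_2 = \langle \lambda, (DG(m_2)-DG(m_1))(m_4-m_3)\rangle$, $T_3 = c\langle (G(m_2)+s_2)-(G(m_1)+s_1),\, DG(m_2)(m_4-m_3)\rangle$, and $T_4 = c\langle G(m_1)+s_1,\, (DG(m_2)-DG(m_1))(m_4-m_3)\rangle$. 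Then $|T_1|\le K_1 d_1(m_1,m_2) d_1(m_3,m_4)$ and $|T_2|\le |\lambda| K_1 d_1(m_1,m_2) d_1(m_3,m_4)$ directly by \eqref{eqLipGrad1}; for $T_3$ I would use $|DG(m_2)(m_4-m_3)|\le K_2 d_1(m_3,m_4)$ together with Lipschitz continuity of $G$ to obtain $|T_3|\le c K_2(K_2 d_1(m_1,m_2) + |s_2-s_1|) d_1(m_3,m_4)$; and for $T_4$ I would use the uniform bound $|G(m_1)+s_1|\le M_G+M_S$ together with \eqref{eqLipGrad1} to get $|T_4|\le c(M_G+M_S) K_1 d_1(m_1,m_2)d_1(m_3,m_4)$. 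Summing and absorbing constants gives a suitable $K_5$. No step is genuinely hard; the only thing to watch is that the $T_3$ decomposition must keep $s_2-s_1$ separated from the $d_1(m_1,m_2)$ contribution so that the final estimate is linear in $d_1(m_1,m_2)+|s_2-s_1|$ rather than in their product.
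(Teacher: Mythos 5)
Your proof is correct and follows essentially the same route as the paper: the first two bounds by direct computation from $\nabla_s L_A = \lambda + c(G(m)+s)$ and the $K_2$-Lipschitz continuity of $G$, and the third by splitting the augmented term $(G(m)+s)^\top DG(m)$ exactly as the paper does (your $T_3+T_4$), with the remaining pieces $T_1,T_2$ being the "other terms" the paper dismisses as easy. No gaps.
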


\begin{proof}
It is clear that $| \nabla_s L_A(m_1,s_1,\lambda,c) | = | \lambda + c(G(m_1)+s_1) |$ is bounded, since $G$ is Lipschitz continuous and since $\text{cl}(\mathcal{R})$ and $\mathcal{S}$ are bounded. The first inequality follows.

We obtain with the Lipschitz-continuity of $G$ that
\begin{align*}
| \nabla_s L_A(m_2,s_2,\lambda,c) - \nabla_s L_A(m_1,s_1,\lambda_c) |
= \ & c | (G(m_2)-G(m_1)) + (s_2-s_1) | \\
\leq \ & c (K_2 d_1(m_1,m_2)+ | s_2 -s_1 |),
\end{align*}
which proves the second inequality.

For proving the third inequality, we focus on the Lipschitz continuity of the mapping $(s,m) \mapsto (G(m)+s)^\top DG(m)$ (the other terms involved in $DL_A(\cdot,\cdot,\lambda,c)$ can be easily treated).
Let $K$ and $S>0$ be such that for all $m_1 \in \text{cl}(\mathcal{R})$, for all $s \in \mathcal{S}$, $| G(m_1) | \leq K$ and $| s_1 | \leq S$.
We have
\begin{align*}
& \big| \big( (G(m_2)+s_2)^\top DG(m_2)-(G(m_1)+s_1)^\top DG(m_1) \big)(m_4-m_3) \big| \\
& \qquad \qquad \leq \big| (G(m_2)-G(m_1)+s_2-s_1)^\top DG(m_2)(m_4-m_3) \big| \\
& \qquad \qquad \qquad \qquad + \big| (G(m_1)+s_1)^\top (DG(m_2)-DG(m_1))(m_4-m_3) \big| \\
& \qquad \qquad \leq \big( K_2 d_1(m_1,m_2) + | s_2 - s_1 | \big) K_2 d_1(m_3,m_4) \\
& \qquad \qquad \qquad \qquad + (K+S)K_1 d_1(m_1,m_2) d_1(m_3,m_4) \\
& \qquad \qquad \leq \max \big( K_2^2 + (K+S)K_1, K_2 \big) \big( d_1(m_1,m_2) + | s_2-s_1 | \big) d_1(m_3,m_4).
\end{align*}
The third inequality follows.
\end{proof}

\begin{proposition} \label{propInnerLoop}
Under Assumption \ref{hypLipGrad}, Algorithm \ref{algo2} terminates.
\end{proposition}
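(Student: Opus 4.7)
The plan is to show that $\varepsilon_\ell \to 0$ as $\ell \to \infty$, which forces the stopping condition $\varepsilon_\ell \leq \omega$ after finitely many iterations. First, I would observe that the line-search step (line 10 of Algorithm \ref{algo2}) ensures that the sequence $L_A(m_\ell,s_\ell,\lambda,c)$ is non-increasing, since the choice $\theta=0$ is always admissible and reproduces the current iterate. Combined with the fact that
\[
L_A(m,s,\lambda,c)=F(m)+\langle\lambda,G(m)+s\rangle+\tfrac{c}{2}|G(m)+s|^2
\]
is coercive in $s$ uniformly for $m\in\text{cl}(\mathcal{R})$ (the set $\text{cl}(\mathcal{R})$ is bounded, $G$ is bounded there by Lipschitz continuity, and the quadratic penalty term dominates the linear one as $|s|\to\infty$ with $s\geq 0$), this shows the iterates $(m_\ell,s_\ell)$ remain in a bounded sublevel set $\text{cl}(\mathcal{R})\times\mathcal{S}$. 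Lemma \ref{lemmaRegLA} then provides uniform Lipschitz constants $K_4,K_5$ for $\nabla_s L_A$ and $DL_A$ on this set.

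Second, I would establish a sufficient-decrease inequality: there exists $\kappa>0$ independent of $\ell$ with
\[
L_A(m_\ell,s_\ell,\lambda,c)-L_A(m_{\ell+1},s_{\ell+1},\lambda,c)\geq \kappa\min(\varepsilon_\ell,\varepsilon_\ell^2).
\]
Write $\alpha_\ell=DL_A(m_\ell,s_\ell,\lambda,c)(m_\ell-\tilde{m}_\ell)$ and $\beta_\ell=|s_\ell-\max(s_\ell+\delta s_\ell,0)|_\infty$, so $\varepsilon_\ell=\max(\alpha_\ell,\beta_\ell)$. From the third estimate of Lemma \ref{lemmaRegLA}, a Wasserstein descent-lemma argument along the linear interpolation $m_\ell(\theta)=(1-\theta)m_\ell+\theta\tilde{m}_\ell$ (using $d_1(m_\ell(\tau),m_\ell)\leq \tau\, d_1(\tilde{m}_\ell,m_\ell)\leq \tau D$, with $D$ the diameter from \eqref{eqDiameter}) yields
\[
L_A(m_\ell(\theta),s_\ell,\lambda,c)\leq L_A(m_\ell,s_\ell,\lambda,c)-\theta\alpha_\ell+\tfrac{1}{2}K_5\theta^2 D^2.
\]
An analogous analysis for the $s$-direction uses the second estimate of Lemma \ref{lemmaRegLA} and splits coordinates according to whether the clipping $\max(\cdot,0)$ is active, producing a similar quadratic upper bound with the first-order coefficient controlled by $-\beta_\ell^2$. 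Since the actual iterate uses the optimal $\theta_\ell\in[0,1]$ jointly for both directions, the decrease dominates that of each one-variable optimization, and each such optimization gives a decrease of order $\min(\alpha_\ell,\alpha_\ell^2/(K_5 D^2))$ or $\min(\beta_\ell^2,\beta_\ell)$ respectively, delivering the announced bound.

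Third, since $L_A(\cdot,\cdot,\lambda,c)$ is bounded below on the bounded set $\text{cl}(\mathcal{R})\times\mathcal{S}$ (by continuity and compactness of $\bar{B}_p(R)$ via Lemma \ref{lemmaCompactnessProperty}), telescoping the sufficient-decrease inequality forces $\sum_\ell \min(\varepsilon_\ell,\varepsilon_\ell^2)<\infty$, hence $\varepsilon_\ell\to 0$, and termination occurs.

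The delicate part is the $s$-step descent estimate, because $\theta\mapsto s_\ell(\theta)=\max(s_\ell+\theta\delta s_\ell,0)$ is only piecewise affine. I would handle this by introducing the projected direction $\bar{\delta}s_\ell$ defined so that $s_\ell(\theta)=s_\ell+\theta\bar{\delta}s_\ell$ for $\theta$ sufficiently small, checking coordinatewise that $\nabla_s L_A(m_\ell,s_\ell,\lambda,c)^\top\bar{\delta}s_\ell=-|\bar{\delta}s_\ell|^2$ and that $\beta_\ell\leq|\bar{\delta}s_\ell|_\infty$. This identification links the projection gap appearing in $\varepsilon_\ell$ to the one-sided derivative at $\theta=0^+$, after which the uniform Lipschitz constant $K_4$ of $\nabla_s L_A$ (Lemma \ref{lemmaRegLA}) yields the standard quadratic remainder and closes the argument.
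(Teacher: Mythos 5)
Your overall strategy is exactly the paper's: monotonicity of $L_A$ along the iterates, boundedness of $(s_\ell)$ via coercivity in $s$, a uniform sufficient-decrease inequality from the quadratic model of Lemma \ref{lemmaRegLA}, and summability forcing the stationarity measure to zero (the paper phrases this as a contradiction, which is immaterial). The $m$-direction estimate and the final telescoping are fine. The gap is in the mechanism you propose for the $s$-direction. You want to anchor a ``standard quadratic remainder'' at the one-sided derivative of $\theta \mapsto L_A(m,\max(s_\ell+\theta\delta s_\ell,0))$ at $\theta=0^+$, using the linearization $s_\ell(\theta)=s_\ell+\theta\bar{\delta}s_\ell$. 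But this linearization is only valid up to the first clipping breakpoint $\tau_\ell=\min\{s_{\ell,i}/|\delta s_{\ell,i}| : \delta s_{\ell,i}<0,\ s_{\ell,i}>0\}$, which depends on $\ell$ and can be arbitrarily small; beyond it the path changes direction and the derivative of $\theta\mapsto L_A(m,s_\ell(\theta))$ \emph{jumps upward} (a negative term $\nabla_{s_i}L_A\cdot\delta s_{\ell,i}\approx-\delta s_{\ell,i}^2$ drops out each time a coordinate is clipped). Consequently the inequality $L_A(m,s_\ell(\theta))\leq L_A(m,s_\ell)-\theta|\bar{\delta}s_\ell|^2+\tfrac{K_4}{2}\theta^2|\bar{\delta}s_\ell|^2$ does not hold on all of $[0,1]$, and evaluating it at $\theta\sim\varepsilon_\ell/K$ — which is what the sufficient-decrease argument requires — is not justified. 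Your phrase ``splits coordinates according to whether the clipping is active'' gestures at the right repair, but the clipping status itself changes with $\theta$, so this needs an argument valid uniformly in $\theta$.

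The paper closes exactly this hole with two facts that your proposal does not invoke: the variational characterization of the projection, $\langle s(0)-s(\theta),\,s(0)+\theta\delta s_\ell-s(\theta)\rangle\leq 0$, which gives $\langle\nabla_s L_A(m(0),s(0)),\,s(\theta)-s(0)\rangle\leq-\tfrac{1}{\theta}|s(\theta)-s(0)|^2$ for every $\theta\in[0,1]$, and the monotonicity lemma of Calamai--Mor\'e (\cite{CM87}, Lemma 2.2), $|s(\theta)-s(0)|\geq\theta\,|s(1)-s(0)|$. Together these yield the uniform first-order bound $-\theta\,|s_\ell-\max(s_\ell+\delta s_\ell,0)|^2$, valid for all $\theta\in[0,1]$ with no reference to breakpoints; the resulting decrease is then of order $\min(\tilde{\varepsilon}_\ell,\tilde{\varepsilon}_\ell^2)$ with $\tilde{\varepsilon}_\ell=DL_A(m_\ell,s_\ell,\lambda,c)(m_\ell-\tilde{m}_\ell)+|s_\ell-\max(s_\ell+\delta s_\ell,0)|^2$ (a sum, with the projection gap \emph{squared}), rather than your $\min(\varepsilon_\ell,\varepsilon_\ell^2)$ with $\varepsilon_\ell$ the maximum of the two raw quantities — a harmless discrepancy, since summability of either expression forces both terms, and hence $\varepsilon_\ell$, to zero. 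You should either import these two projection facts or carry out the coordinatewise integration of the piecewise-constant slope explicitly; the one-sided derivative at $0^+$ alone does not suffice.
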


\begin{proof}
We do a proof by contradiction and therefore assume that the algorithm never terminates. Therefore, it generates a sequence $(m_\ell,s_\ell,\varepsilon_\ell)_{\ell \in \mathbb{N}}$ which is such that
$\varepsilon_{\ell} > \omega$, for all $\ell \in \mathbb{N}$.
One can easily prove that the following set is bounded:
\begin{equation*}
\mathcal{S}:= \{ s \in \R^N \,|\, \exists m \in \text{cl}(\mathcal{R}),\ L_A(m,s,\lambda,c) \leq L_A(m_0,s_0,\lambda,c) \},
\end{equation*}
since $G$ is bounded on $\text{cl}(\mathcal{R})$ and since for a fixed $m \in \text{cl}(\mathcal{R})$, $s \in \R^N \mapsto L_A(m,s,\lambda,c)$ is linear-quadratic, with a dominant term $ c | s |^2$ independent of $m$.
In a similar way, one can prove that $L_A(\cdot,\cdot,\lambda,c)$ is bounded from below.
By construction, the sequence $(L_A(m_\ell,s_\ell,\lambda,c))_{\ell \in \mathbb{N}}$ is decreasing, therefore, for all $\ell \in \mathbb{N}$, $s_\ell \in \mathcal{S}$. Let $K_3$, $K_4$, and $K_5$ be the three constants given by Lemma \ref{lemmaRegLA}, for the set $\mathcal{S}$.

The proof mainly consists in finding an upper estimate of the decay
\begin{equation*}
L_A(m_{\ell+1},s_{\ell +1},\lambda,c)-L_A(m_{\ell},s_{\ell},\lambda,c),
\end{equation*}
at a given iteration $\ell$. This is achieved with estimate \eqref{eqEstimateDecay} below.
Let us introduce some notation, used only in this proof. For $\theta \in [0,1]$, we denote
\begin{equation*}
m(\theta)= (1-\theta) m_{\ell} + \theta \tilde{m}_{\ell} \quad \text{and} \quad
s(\theta)= \max(s_{\ell} + \theta \delta s_{\ell},0).
\end{equation*}
We also omit the arguments $\lambda$ and $c$ of the augmented Lagrangian (since they are fixed).
Let $\theta \in [0,1]$. Observe first that by Lemma \ref{lemmaConvComb},
\begin{equation} \label{eqEstimM}
d_1(m(0),m(\theta)) \leq \theta d_1(m_\ell,\tilde{m}_\ell) \leq \theta D,
\end{equation}
where $D$ is the diameter of $\text{cl}(\mathcal{R})$ (defined by \eqref{eqDiameter}).
Let us estimate $| s(\theta)-s(0) |$. By Lemma \ref{lemmaRegLA}, $| \delta s_\ell | = | \nabla_s L_A(m_\ell,s_\ell,\lambda,c) | \leq K_3$. Since $s_{\ell} \geq 0$ and since the mapping $s \in \R^N \mapsto \max(s,0)$ is Lipschitz-continuous with modulus 1 (it is a projection), we have
\begin{equation} \label{eqEstimS}
| s(\theta)-s(0) |
= | \max(s_\ell + \theta \delta s_{\ell}, 0) - \max(s_\ell, 0) |
\leq | s_\ell + \theta \delta s_\ell - s_\ell |
= \theta | \delta s_\ell |
\leq \theta K_3.
\end{equation}
Now, we split the decay into two terms as follows:
\begin{align*}
& L_A(m(\theta),s(\theta)) - L_A(m(0),s(0)) \\
& \qquad \qquad = \underbrace{L_A(m(\theta),s(\theta))-L_A(m(\theta),s(0))}_{(a)} +
\underbrace{L_A(m(\theta),s(0))-L_A(m(0),s(0))}_{(b)}.
\end{align*}
We split the first term as follows:
\begin{align*}
(a) = \ & \underbrace{\int_0^1 \big\langle \nabla_s L_A \big( m(\theta),s(0)+\xi (s(\theta)-s(0)) \big) - \nabla_s L_A \big( m(0),s(0) \big), s(\theta)-s(0) \big\rangle \dd \xi}_{(a_1)} \\
& \qquad + \underbrace{\langle \nabla_s L_A(m(0),s(0)), s(\theta)-s(0) \rangle}_{(a_2)}.
\end{align*}
Combining Lemma \ref{lemmaRegLA} with estimates \eqref{eqEstimM} and \eqref{eqEstimS}, we obtain that
\begin{equation*}
(a_1) \leq K_4 (D\theta + K_3 \theta) K_3 \theta = K_3 K_4(D+K_3) \theta^2.
\end{equation*}
Since $s(\theta)$ is the orthogonal projection of $s_{\ell}+ \theta \delta s_{\ell}$ on $\R_{\geq 0}^N$ and since $s_\ell= s(0) \in \R_{\geq 0}^N$, we have
\begin{equation*}
\langle s(0) - s(\theta) , s(0) + \theta \delta s_\ell - s(\theta) \rangle \leq 0.
\end{equation*}
Using $\delta s_\ell= - \nabla_s L_A(m_\ell,s_\ell,\lambda,c)$, we deduce that
\begin{equation*}
\theta \langle \nabla_s L_A(m_\ell,s_\ell,\lambda,c), s(\theta)-s_0 \rangle \leq - | s(\theta)-s(0) |^2.
\end{equation*}
It is proved in \cite[Lemma 2.2]{CM87} that
\begin{equation*}
| s(\theta)-s(0) | \geq \theta | s(1)-s(0) |.
\end{equation*}
Combining the last two estimates, we obtain that
\begin{equation*}
(a_2)= \langle \nabla_s L_A(m_\ell,s_\ell,\lambda,c), s(\theta)-s(0) \rangle
\leq -\theta | s(1)-s(0) |^2
= -\theta | s_\ell - \max(s_\ell + \delta s_\ell, 0) |^2.
\end{equation*}
Let us estimate $(b)$. We have
\begin{align*}
(b) = \ & \int_0^\theta \big( DL_A(m(\xi),s(0))-DL_A(m(0),s(0)) \big) (\tilde{m}_\ell - m_\ell ) \dd \xi \\
& \qquad + \theta DL_A(m(0),s(0))(\tilde{m}_\ell-m_\ell) \\
\leq \ & \int_0^\theta K_5 D \xi D \dd \xi + \theta DL_A(m_\ell,s_\ell)(\tilde{m}_\ell-m_\ell) \\
= \ & \frac{1}{2} K_5 D^2 \theta^2 + \theta DL_A(m_\ell,s_\ell)(\tilde{m}_\ell-m_\ell).
\end{align*}
For all $\ell \in \mathbb{N}$, we denote
\begin{equation*}
\tilde{\varepsilon}_\ell
= DL_A(m_\ell,s_\ell,\lambda,c)(m_\ell-\tilde{m}_\ell) + | s_\ell - \max(s_\ell + \delta s_\ell,0) |^2 \geq 0.
\end{equation*}
Combining the three obtained upper estimates of $(a_1)$, $(a_2)$, and $(b)$, we obtain that there exists a constant $K$, independent of $\ell$, such that for all $\theta \in [0,1]$,
\begin{equation} \label{eqEstimateDecay}
L_A(m(\theta),s(\theta))-L_A(m(0),s(0)) \leq \frac{1}{2} K \theta^2 - \theta \tilde{\varepsilon}_\ell.
\end{equation}
For all $\ell \in \mathbb{N}$, we define
\begin{equation*}
\tilde{\theta}_\ell = \min \Big( 1, \frac{\tilde{\varepsilon}_\ell}{K} \Big).
\end{equation*}
If $\tilde{\theta}_\ell = \frac{\tilde{\varepsilon}_\ell}{K}$, then
\begin{equation*}
L_A(m(\tilde{\theta}_\ell),s(\tilde{\theta}_\ell))-L_A(m(0),s(0))
\leq - \frac{1}{2K} \tilde{\varepsilon}_\ell^2.
\end{equation*}
Otherwise, $\tilde{\theta}_{\ell}=1$ and $K \leq \tilde{\varepsilon}_\ell$. Therefore
\begin{equation*}
L_A(m(\tilde{\theta}_\ell),s(\tilde{\theta}_\ell))-L_A(m(0),s(0))
\leq \frac{1}{2} K - \tilde{\varepsilon}_\ell \leq - \frac{1}{2} \tilde{\varepsilon}_\ell.
\end{equation*}
Therefore, for all $\ell \in \mathbb{N}$,
\begin{equation*}
L_A(m(\tilde{\theta}_\ell),s(\tilde{\theta}_\ell))-L_A(m(0),s(0))
\leq -\frac{1}{2} \min \Big( \frac{1}{K} \tilde{\varepsilon}_\ell^2, \tilde{\varepsilon_\ell} \Big)
\end{equation*}
and thus
\begin{align*}
L_A(m_{\ell+1},s_{\ell +1})- L_A(m_\ell,s_\ell)
\leq \ & L_A(m(\tilde{\theta}_\ell),s(\tilde{\theta}_\ell)) - L_A(m(0),s(0)) \\
\leq \ & -\frac{1}{2} \min \Big( \frac{1}{K} \tilde{\varepsilon}_\ell^2, \tilde{\varepsilon_\ell} \Big).
\end{align*}
Recall that the sequence $(L_A(m_\ell,s_\ell))_{\ell \in \mathbb{N}}$ is bounded from below. Let $\bar{L}_A$ be a lower bound. We deduce from the above estimate that for all $q \in \mathbb{N}$,
\begin{equation*}
\sum_{\ell= 0}^q \min \Big( \frac{1}{K} \tilde{\varepsilon}_\ell^2, \tilde{\varepsilon}_\ell \Big)
\leq 2 \big( L_A(m_0,s_0) - L_A(m_{q+1}, s_{q+1}) \big)
\leq 2 \big(L_A(m_0,s_0) - \bar{L}_A \big).
\end{equation*}
The sequence $\big( \min(K\tilde{\varepsilon}_\ell^2, \tilde{\varepsilon}_\ell) \big)_{\ell \in \mathbb{N}}$ is therefore summable and thus converges to 0. It follows that $(\tilde{\varepsilon}_\ell)_{\ell \in \mathbb{N}}$ converges to 0. Since $\tilde{\varepsilon}_\ell$ is the sum of two non-negative terms, they both converge to 0, i.e.
\begin{equation*}
DL_A(m_\ell,s_\ell,\lambda,c)(m_\ell-\tilde{m}_\ell) \underset{\ell \to \infty}{\longrightarrow} 0 \quad \text{and} \quad
| s_\ell - \max(s_\ell + \delta s_\ell,0) | \underset{\ell \to \infty}{\longrightarrow} 0.
\end{equation*}
It follows that $\varepsilon_\ell \underset{\ell \to \infty}{\longrightarrow} 0$, which is a contradiction.
\end{proof}

\begin{proposition} \label{propTerminationAlgo1}
Under Assumption \ref{hypLipGrad}, if Algorithm \ref{algo1} does not terminate, then any limit point $(\bar{m},\bar{s})$ of $(m_k,s_k)_{k \in \mathbb{N}}$ --- there exists at least one --- is such that
\begin{equation} \label{eqSingMult}
\inf_{m \in \text{\emph{cl}}(\mathcal{R})} \langle G(\bar{m})+\bar{s}, DG(\bar{m})(m-\bar{m}) \rangle = 0
\end{equation}
and such that
\begin{equation} \label{eqSingMult2}
G(\bar{m}) + \bar{s} \geq 0 \quad \text{and} \quad
\Big[ \bar{s}_i > 0 \Longrightarrow G_i(\bar{m}) + \bar{s}_i = 0, \quad \forall i= 1,...,N \Big].
\end{equation}
\end{proposition}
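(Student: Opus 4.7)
The conditions (\ref{eqSingMult})--(\ref{eqSingMult2}) are precisely the first-order optimality conditions (in the variational form of Theorem \ref{theoOptiCond}, applied to a linear cost together with the constraint $s \geq 0$) for the problem of minimizing the squared infeasibility $\tfrac{1}{2} | G(m) + s |^2$ over $\text{cl}(\mathcal{R}) \times \R_{\geq 0}^N$. The strategy is accordingly to divide the inner stopping criterion of Algorithm \ref{algo2} by the penalty parameter $c_k$ and pass to the limit along a convergent subsequence: once rescaled, the quadratic penalty term dominates the augmented Lagrangian and produces the desired limiting conditions on $G(\bar{m}) + \bar{s}$.

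First I would argue that non-termination of Algorithm \ref{algo1} forces $c_k \to + \infty$: otherwise $c_k$ stabilizes after finitely many iterations, the geometric updates $\eta_{k+1} = \eta_k/10^{0.1}$ and $\omega_{k+1} = \omega_k/10$ drive $(\eta_k, \omega_k)$ below $(\eta_*, \omega_*)$, and termination is triggered. The sequence $(m_k)$ lies in $\text{cl}(\mathcal{R}) \subseteq \bar{B}_p(R)$, which is compact for $d_1$ by Lemma \ref{lemmaCompactnessProperty}. Boundedness of $(s_k)$ follows from the coordinatewise dichotomy used to derive (\ref{eqSignCondition})--(\ref{eqCompCond}): on each coordinate $i$, either $s_{k+1,i} \leq \omega_k$, or $| \lambda_{k,i} + c_k( G_i(m_{k+1}) + s_{k+1,i} ) | \leq \omega_k$, which controls $s_{k+1,i}$ through $\lambda_{k,i}/c_k$ and $G_i(m_{k+1})$; combined with the property $\lambda_k/c_k \to 0$ discussed below and with the boundedness of $G$ on $\text{cl}(\mathcal{R})$, this yields uniform boundedness of $(s_k)$ and hence, by Bolzano--Weierstrass, a convergent subsequence $(m_{k_j}, s_{k_j}) \to (\bar{m}, \bar{s})$.

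To obtain (\ref{eqSingMult}), I would combine $\varepsilon_{k+1} \leq \omega_k$ with (\ref{eqDerLaug}) to get, for every $m \in \text{cl}(\mathcal{R})$,
\[
DF(m_{k+1})(m-m_{k+1}) + \big\langle \lambda_k + c_k(G(m_{k+1})+s_{k+1}),\, DG(m_{k+1})(m-m_{k+1}) \big\rangle \geq -\omega_k.
\]
Dividing by $c_k$ and invoking the boundedness of $DF(m_{k+1})(m-m_{k+1})$ and $DG(m_{k+1})(m-m_{k+1})$ from Assumption \ref{hypLipGrad}, together with $\omega_k/c_k \to 0$ and $\lambda_k/c_k \to 0$, I would pass to the limit along $(k_j)$, using the continuity of $G$ and of the linear form $\tilde{m} \mapsto DG(\bar{m}) \tilde{m}$, to obtain $\inf_{m \in \text{cl}(\mathcal{R})} \langle G(\bar{m}) + \bar{s},\, DG(\bar{m})(m - \bar{m}) \rangle \geq 0$; the reverse inequality follows by taking $m = \bar{m}$. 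For (\ref{eqSingMult2}) I would reuse the same coordinatewise dichotomy: after division by $c_k$, the first alternative yields $G_i(\bar{m}) + \bar{s}_i = 0$ in the limit, while the second yields $\bar{s}_i = 0$ and, from the sign of $s_{k+1,i} + \delta s_{k+1,i}$, also $G_i(\bar{m}) + \bar{s}_i \geq 0$; complementarity holds because $\bar{s}_i > 0$ forces the first alternative along the tail of the subsequence.

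The main technical obstacle is the property $\lambda_k / c_k \to 0$, which is not automatic. The multiplier is frozen in the penalty-increase branch (where $c_k$ is multiplied by $10$) and, when updated, satisfies $| \lambda_{k+1} - \lambda_k | \leq c_k \eta_k$ because of the test $| G(m_{k+1}) + s_{k+1}| \leq \eta_k$. Controlling the growth of $\lambda_k$ through the interleaving of the two branches, in the spirit of the analyses of \cite{NW06, CGT92}, is the delicate step; one clean way is to restrict to the subsequence of iterations immediately following a penalty increase, along which $\lambda_k$ coincides with its previous value while $c_k$ has just jumped, guaranteeing $\lambda_k/c_k \to 0$ there.
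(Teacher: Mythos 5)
Your overall architecture --- dividing the inner stopping criterion of Algorithm \ref{algo2} by the penalty parameter and passing to the limit, and exploiting the coordinatewise dichotomy behind \eqref{eqSignCondition}--\eqref{eqCompCond} to obtain \eqref{eqSingMult2} --- is the same as the paper's. There is, however, a genuine gap at the step you yourself flag as delicate: the behaviour of $\lambda_k/c_k$. The resolution is elementary and you miss it. Since non-termination forces infinitely many penalty increases, $c_k\to\infty$, hence $\eta_k\leq c_k^{-0.1}\to 0$ and $\omega_k\to 0$, so there is $k_1$ with $\eta_k\leq\eta_*$ and $\omega_k\leq\omega_*$ for all $k\geq k_1$. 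For such $k$ the test $|G(m_{k+1})+s_{k+1}|\leq\eta_k$ can never be satisfied: Algorithm \ref{algo2} already guarantees $\varepsilon_{k+1}\leq\omega_k\leq\omega_*$, so passing that test would trigger termination, a contradiction. Hence the multiplier-update branch is never entered after $k_1$ and $\lambda_k$ is \emph{constant} for $k\geq k_1$; the property $\lambda_k/c_k\to 0$ is then automatic. Your proposed workaround --- restricting to the subsequence of iterations immediately following a penalty increase --- does not repair the gap: without an a priori bound on $\lambda_k$, a tenfold jump of $c_k$ does not force the ratio to vanish, and in any case the proposition concerns \emph{every} limit point of the full sequence, so an argument valid only along a hand-picked subsequence is insufficient.

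A second, smaller point: your boundedness argument for $(s_k)$ is itself conditional on the unresolved $\lambda_k/c_k$ property, so as written the proof is circular there. Once the freezing of $\lambda_k$ is established, your dichotomy-based bound (either $s_{k+1,i}\leq\omega_k$, or $|G_i(m_{k+1})+s_{k+1,i}|\leq(\omega_k+|\lambda_{k,i}|)/c_k$ and hence $s_{k+1,i}\leq |G_i(m_{k+1})|+(\omega_k+|\lambda_{k,i}|)/c_k$) does give boundedness and is arguably simpler than the paper's route, which instead uses the monotone decrease of $L_A$ along the inner iterations together with the summability of $(1/\sqrt{c_k})$ to bound $G(m_k)+s_k$. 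The limit passages for \eqref{eqSingMult} and \eqref{eqSingMult2} are otherwise sound; note only that you appear to have swapped the two alternatives of the dichotomy in the sentence deriving \eqref{eqSingMult2}: it is the branch $s_{k,i}>\omega_{k-1}$ that yields $G_i(\bar{m})+\bar{s}_i=0$ after division by $c_{k-1}$, while $s_{k,i}\leq\omega_{k-1}$ yields $\bar{s}_i=0$.
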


Observe that the two conditions satisfied by $(\bar{m},\bar{s})$ are the optimality conditions for the problem
\begin{equation*}
\inf_{m \in \text{cl}(\mathcal{R}),\, s \in \R^N} | G(m)+s |^2, \quad \text{subject to: } s \geq 0.
\end{equation*}

\begin{proof}[Proof of Proposition \ref{propTerminationAlgo1}]

Let us assume that the algorithm does not terminate.
Let us first prove that there are infinitely many indices $k$ such that $| G(m_{k+1}) + s_{k+1} | > \eta_k$. Suppose that it is not the case, then there exists $k_0$ such that for all $k \geq k_0$, $| G(m_{k+1}) + s_{k+1} | \leq \eta_k$. Considering the update formulas for $\eta_k$ and $\omega_k$ used in this situation (line 12), we obtain that $\eta_k \longrightarrow 0$ and $\omega_k \longrightarrow 0$ and thus for some $k \geq k_0$ sufficiently large, $\eta_k \leq \eta_*$ and $\omega_k \leq \omega_*$. The algorithm necessarily terminates when these two inequalities hold, which is a contradiction.

When $| G(m_{k+1}) + s_{k+1} | \leq \eta_k$, $c_k$ is unchanged and when $| G(m_{k+1})+ s_{k+1}) | > \eta_k$ (which occurs infinitely many times), $c_{k+1}= 10 c_k$. Therefore, $c_k \longrightarrow \infty$. We also have that for all $k \in \mathbb{N}$, $c_k \geq 1$. It is easy to prove by induction that for all $k \in \mathbb{N}$, $\eta_k \leq 1/c_{k}^{0.1}$ and that $\omega_k \leq 1/c_{k+1}$. Therefore, $\eta_k \longrightarrow 0$ and $\omega_k \longrightarrow 0$. For $k$ large enough, say for $k \geq k_1$, $\eta_k \leq \eta_*$ and $\omega_k \leq \omega_*$. Therefore, for $k \geq k_1$, $| G(m_{k+1}) + s_{k+1} | > \eta_k$ (otherwise, the algorithm would terminate). It follows that for $k \geq k_1$, the Lagrange multiplier is not updated anymore: $\lambda_k = \lambda_{k_1}$. We denote this constant value of the Lagrange multiplier by $\lambda$, for simplicity.

We now prove that the sequence $(s_k)_{k \in \mathbb{N}}$ is bounded. Let $K= \sup_{m \in \text{cl}(\mathcal{R})} |F(m)| < \infty$. The following inequalities hold true:
\begin{align} \label{eqEstimatesUpperLower}
-K - | \lambda | \cdot | G(m) + s | + \frac{c}{2} | G(m) + s |^2
\leq \ & L_A(m,s,\lambda,c), \\
L_A(m,s,\lambda,c)
\leq \ & K + | \lambda | \cdot | G(m) + s | + \frac{c}{2} | G(m) + s |^2.
\end{align}
The value of the augmented Lagrangian is decreasing along the iterations of Algorithm \ref{algo2}.
Moreover, the pair $(m_{k+1},s_{k+1})$ is obtained as an output of Algorithm \ref{algo2}, with initial value $(m_k,s_k)$. Therefore,
\begin{equation*}
L_A(m_{k+1},s_{k+1},\lambda,c_k) \leq L_A(m_k,s_k,\lambda,c_k), \quad \forall k \geq k_1.
\end{equation*}
Using \eqref{eqEstimatesUpperLower} and denoting $y_k= G(m_k) + s_k$, we obtain that
\begin{equation*}
- K - | \lambda | \cdot | y_{k+1} | + \frac{c_k}{2} | y_{k+1} |^2
\leq K + | \lambda | \cdot | y_k | + \frac{c_k}{2} | y_k |^2.
\end{equation*}
Dividing by $c_k/2$ and adding $| \lambda |^2/c_k^2$ on both sides and factorizing, we obtain that
\begin{equation*}
- \frac{2K}{c_k} + \Big( | y_{k+1} | - \frac{| \lambda |}{c_k} \Big)^2
\leq \frac{2K}{c_k} + \Big( | y_k | + \frac{| \lambda |}{c_k} \Big)^2.
\end{equation*}
Using the inequality $\sqrt{a+b} \leq \sqrt{a} + \sqrt{b}$, we obtain
\begin{equation*}
| y_{k+1} | - \frac{| \lambda |}{c_k}
\leq | y_k | + \frac{| \lambda |}{c_k} + \frac{2 \sqrt{K}}{\sqrt{c_k}}
\end{equation*}
and finally, by induction, for all $q \geq k_1$,
\begin{equation} \label{eqBoundednessY}
| y_q | \leq | y_{k_1} | + \sum_{k=k_1}^{q-1} \Big( \frac{2 | \lambda |}{c_k} + \frac{2 \sqrt{K}}{\sqrt{c_k}} \Big).
\end{equation}
Since the sequence $(c_k)_{k \geq k_1}$ is a geometric sequence of ratio 10, the sequences $(1/c_k)_{k \geq k_1}$ and $(1/\sqrt{c_k})_{k \geq k_1}$ are also geometric with ratios 1/10 and 1/$\sqrt{10}$, respectively. These last two sequences are therefore summable and we deduce from \eqref{eqBoundednessY} that $(y_k)_{k \in \mathbb{N}}$ is bounded. Since $(G(m_k))_{k \in \mathbb{N}}$ is bounded, we finally obtain that $(s_k)_{k \in \mathbb{N}}$ is bounded.

Since $\text{cl}(\mathcal{R})$ is compact and $(s_k)_{k \in \mathbb{N}}$ bounded, the sequence $(m_k,s_k)_{k \in \mathbb{N}}$ possesses at least one accumulation point. Let $(\bar{m},\bar{s}) \in \text{cl}(\mathcal{R}) \times \R_{\geq 0}^N$ be an accumulation point. To simplify, we assume that the whole sequence $(m_k,s_k)_{k \in \mathbb{N}}$ converges to $(\bar{m},\bar{s})$. The arguments which follow can be easily adapted if only a subsequence converge to $(\bar{m},\bar{s})$. Let $m \in \text{cl}(\mathcal{R})$. For all $k \in \mathbb{N}$, we have
\begin{equation*}
DF(m_k) + \langle \lambda + c_{k-1}(G(m_k)+s_k), DG(m_k)(m-m_k) \rangle \geq -\omega_{k-1}.
\end{equation*}
Dividing by $c_{k-1}$ and passing to the limit, we obtain that
\begin{equation*}
\langle G(\bar{m}) + \bar{s}), DG(\bar{m}) (m-\bar{m}) \rangle \geq 0.
\end{equation*}
Minimizing the left-hand side with respect to $m$, we obtain \eqref{eqSingMult}. It remains to prove \eqref{eqSingMult2}. For all $k \geq k_1$, we have
\begin{equation} \label{eqEquationNotCv}
| s_k - \max(s_k - (\lambda + c_{k-1}(G(m_k) + s_k)), 0) |_\infty \leq \omega_{k-1}.
\end{equation}
Dividing by $c_{k-1}$ and passing to the limit, we obtain that
\begin{equation*}
\max( - (G(\bar{m})+\bar{s}), 0) = 0,
\end{equation*}
which proves that $G(\bar{m})+ \bar{s} \geq 0$. Let $i \in \{ 1,...,N \}$ be such that $\bar{s}_i > 0$. For $k$ large enough, $\omega_{k-1} < s_{k,i}$ and therefore, as a consequence of \eqref{eqEquationNotCv},
\begin{equation*}
\max(s_{k,i}- (\lambda_i + c_{k-1}(G_i(m_k) + s_{k,i})), 0) > 0,
\end{equation*}
meaning that
\begin{equation*}
s_{k,i}- (\lambda_i + c_{k-1}(G_i(m_k) + s_{k,i})) > 0.
\end{equation*}
Dividing by $c_{k-1}$ and passing to the limit, we obtain that $G_i(\bar{m}) + \bar{s}_i \leq 0$ and therefore, we obtain that $G_i(\bar{m}) + \bar{s}_i = 0$, since $G(\bar{m}) + \bar{s} \geq 0$, which proves that \eqref{eqSingMult2} holds.
\end{proof}

\subsection{Results}

We present numerical results for two academical problems. The considered SDE is the following for the two of them:
\begin{equation} \label{eqDynForResults}
\dd X_t= u_t \dd t + \dd W_t, \quad X_0= 0, \quad U= [-2,2].
\end{equation}
One can only act on the drift, the volatility is constant and equal to 1. All standard problems are solved by dynamic programming. The corresponding HJB equation is discretized with a semi-Lagrangian scheme (see \cite{CF95}), which consists in approximating the SDE by a controlled Markov chain, defined at times $\{ 0, \delta t,..., T \}$ with $\delta t= 10^{-2}$ and taking values in $\{ -5,-5 + \delta x,..., 5-\delta x, 5 \}$, with $\delta x= 10^{-3}$ and using reflecting boundary conditions. At any mesh point, the minimization of the Hamiltonian is realized by enumeration, for a discretized set of controls $\{ -2, -2 + \delta u,..., 2 \}$ with $\delta u= 10^{-1}$. As mentioned in Remark \ref{remarkDifficulty}, the resolution of standard problems (as the one in Algorithm \ref{algo2}, line 4) is done in two phases. Once an optimal control $u$ has been found by dynamic programming, the corresponding probability distribution is obtained by solving the Chapman-Kolmogorov equation associated with the discretized Markov chain.

The minimization with respect to $\theta$ involved in the computation of a steplength (line 10, Algorithm \ref{algo2}) is done by enumeration. The considered discretized set is $\{ 0, \delta \theta, 2 \delta \theta,...,1 \}$ with $\delta \theta= 10^{-6}$. Note this step of the method is computationally inexpensive (at least for the considered test cases).

The Algorithm \ref{algo1} is initialized with $m_{\text{init}}= \delta_0$ (the Dirac distribution centered at 0), $s_{\text{init}}= 0$, and $\lambda_{\text{init}}= 0$.

Since the SDE \eqref{eqDynForResults} is linear with respect to $u$, the Hamiltonian $H$ is itself linear with respect to $u$ and one can expect that optimal controls only take the boundary values $-2$ and $2$ when the derivative (w.r.t.\@ $x$) of the solution to the HJB equation is positive (resp.\@ negative). The optimal controls obtained below indeed take these values for most of the mesh points. This is why we worked with a rather coarse discretization of $U$.

\paragraph{Test case 1: bounded variance}

For the first test case, we consider the following cost function and constraint:
\begin{equation} \label{eqTestCase1}
F(m)= \int_{\R} x \dd m(x) \quad \text{and} \quad
G(m)= \int_{\R} x^2 \dd m(x) - \Big( \int_{\R} x \dd m(x) \Big)^2 - \alpha,
\end{equation}
where $\alpha= 0.4$. Observe that $F(m_T^{0,Y_0,u})= \mathbb{E}\big[ X_T^{0,Y_0,u} \big]$: the cost function is the expectation of the final state. The mapping $G(m_T^{0,Y_0,u})+ \alpha$ is the variance of $X_T^{0,Y_0,u}$. The cost function $F$ is linear and $G$ is of the form \eqref{eqExample1}. The derivative of $G$ at any $m$, obtained with \eqref{eqDiffFunctionExp}, is a linear-quadratic function (with respect to $x$), given by
\begin{equation*}
DG(m,x)= x^2 - 2 \Big[ \int_{\R} y \dd m(y) \Big] x.
\end{equation*}

Convergence results are shown on Figure \ref{fig:cvTC1}. The tolerances $\eta_*$ and $\omega_*$ are chosen equal, for values ranging from $10^{-3}$ to $10^{-6}$. For each of these tolerances, the values of $G(\bar{u})$ and $\bar{\lambda}$ are provided. The column ``Var.\@ Ineq." contains the following value:
\begin{equation*}
-\inf_{m \in \text{cl}(\mathcal{R})} DL(m_T^{0,Y_0,\bar{u}},\bar{\lambda})(m-m_T^{0,Y_0,\bar{u}}),
\end{equation*}
which somehow indicates to what extent the variational inequality is satisfied.
The column $c$ shows the value of the penalty parameter at the last iteration. The last column shows the total number of standard problems which have to be solved.

It can be first observed that for tolerances below $10^{-4}$, the variational inequality is almost satisfied. The violation of the constraint is small and of the same order as the tolerances. The obtained Lagrange multipliers converge when the tolerance goes to 0. We also observe that the mechanism of Algorithm \ref{algo1} avoids that the penalty term $c$ becomes very high, for small tolerances.

\begin{figure}[htb]
\centering
\begin{tabular}{|c|c|c|c|c|c|}
\hline
Tolerance & $G(\bar{u})$ & $\phantom{|^{a^{a^a}}} \bar{\lambda} \phantom{|^{a^{a^a}}}$ & Var.\@ Ineq. & $c$ & Iterations \\ \hline
1e$-$3 & $3.72\,$e$-3$ & $1.285$ & $1.92\,$e$-5$ & $100$ & 29 \\
1e$-$4 & $7.54\,$e$-4$ & $1.318$ & $1.77\,$e$-15$ & 100 & 39 \\
1e$-$5 & $1.87\,$e$-5$ & $1.324$ & $2.66\,$e$-15$ & 1000 & 60 \\
1e$-$6 & $1.87\,$e$-5$ & $1.324$ & $2.66\,$e$-15$ & 1000 & 60 \\ \hline
\end{tabular}
\caption{Convergence results for the Test Case 1}
\label{fig:cvTC1}
\end{figure}

The optimal control generated by the algorithm is shown on Figure \ref{fig:control_1} (page \pageref{fig:control_1}), the associated probability measure (at any time $t$) is shown on Figure \ref{fig:distribution_1}. The value function associated with the standard problem with cost function $DL(m_T^{0,Y_0,\bar{u}},\bar{\lambda},\cdot)$ is represented on Figure \ref{fig:adjoint1}, we recall that it plays the role of an adjoint equation.

\begin{figure}[p!]
\begin{minipage}[c]{0.49\linewidth}
\begin{center}
\textbf{Test case 1 (problem \eqref{eqTestCase1})}
\begin{subfigure}[c]{\linewidth}
\includegraphics[trim= 3.5cm 8cm 3.5cm 8cm, clip, scale= 0.5]{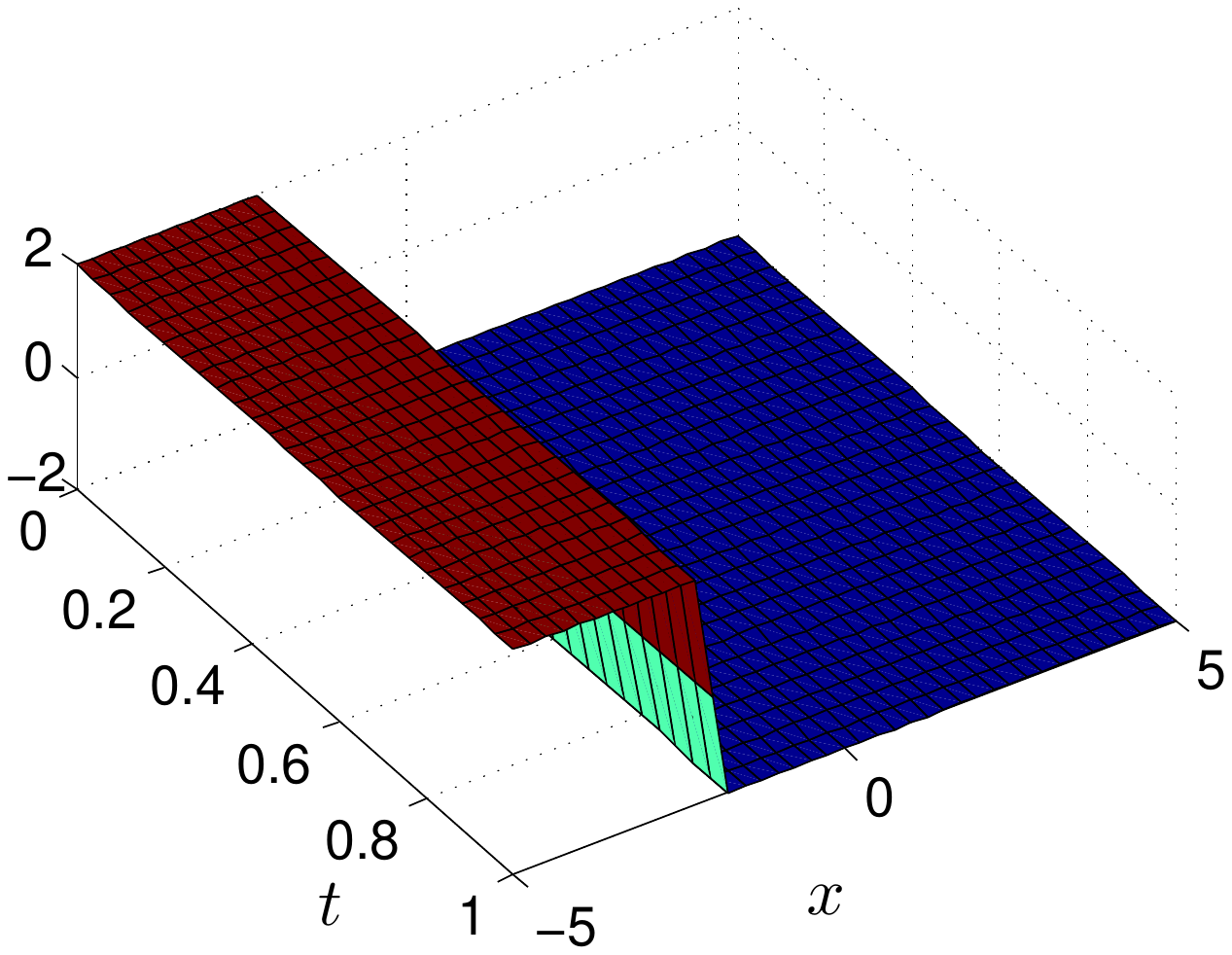}
\caption{Control}
\label{fig:control_1}
\end{subfigure}
\vspace{5mm}
\begin{subfigure}[c]{\linewidth}
\includegraphics[trim= 3.5cm 8cm 3.5cm 8cm, clip, scale= 0.5]{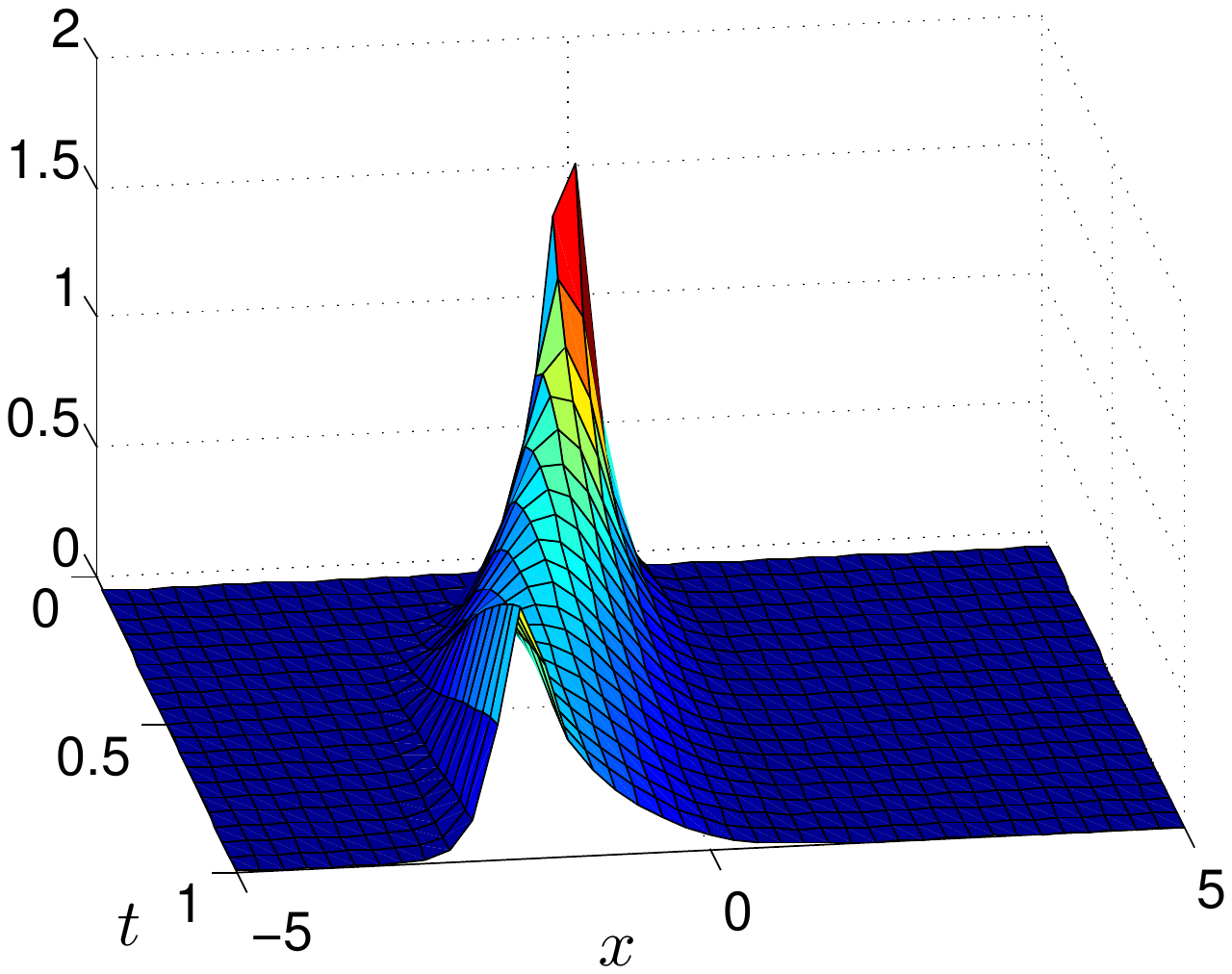}
\caption{Probability distribution}
\label{fig:distribution_1}
\end{subfigure}
\vspace{5mm}
\begin{subfigure}[c]{\linewidth}
\includegraphics[trim= 3.5cm 8cm 3.5cm 8cm, clip, scale= 0.5]{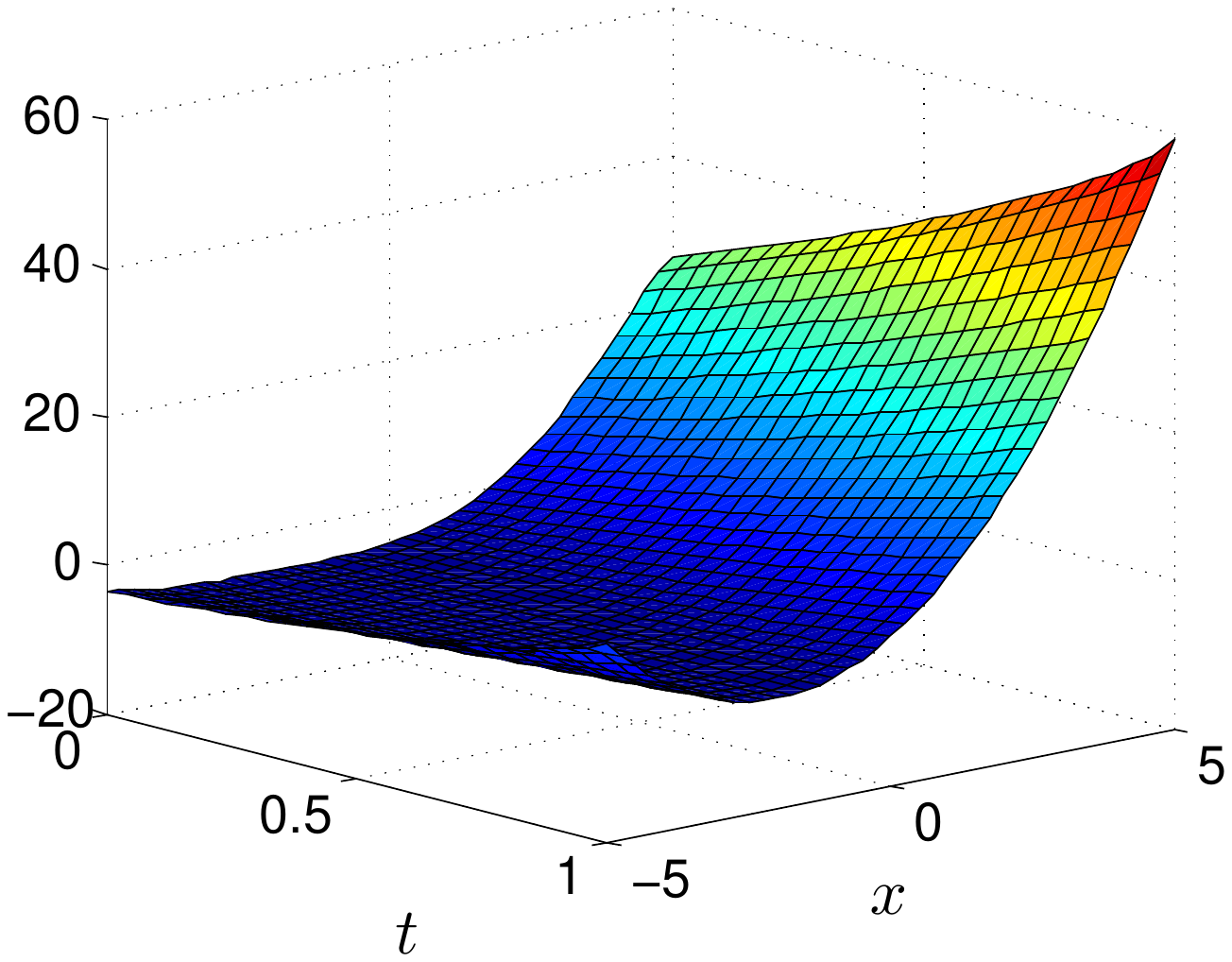}
\caption{Adjoint equation}
\label{fig:adjoint1}
\end{subfigure}
\vspace{10mm}
\end{center}
\end{minipage}
\begin{minipage}[c]{0.49\linewidth}
\begin{center}
\textbf{Test case 2 (problem \eqref{eqTestCase2})}
\begin{subfigure}[c]{\linewidth}
\includegraphics[trim= 3.5cm 8cm 3.5cm 8cm, clip, scale= 0.5]{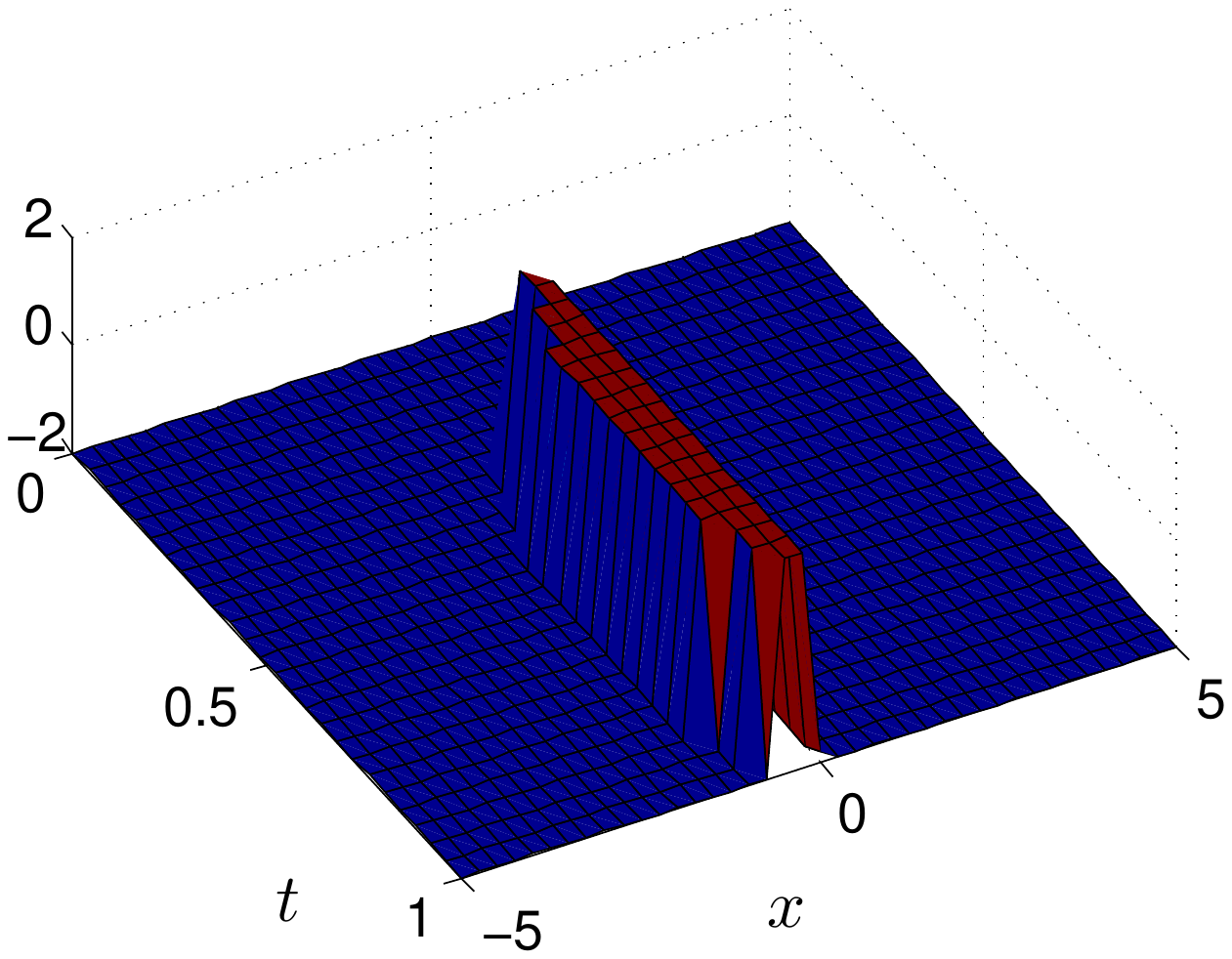}
\caption{Control}
\label{fig:control_2}
\end{subfigure}
\vspace{5mm}
\begin{subfigure}[c]{\linewidth}
\includegraphics[trim= 3.5cm 8cm 3.5cm 8cm, clip, scale= 0.5]{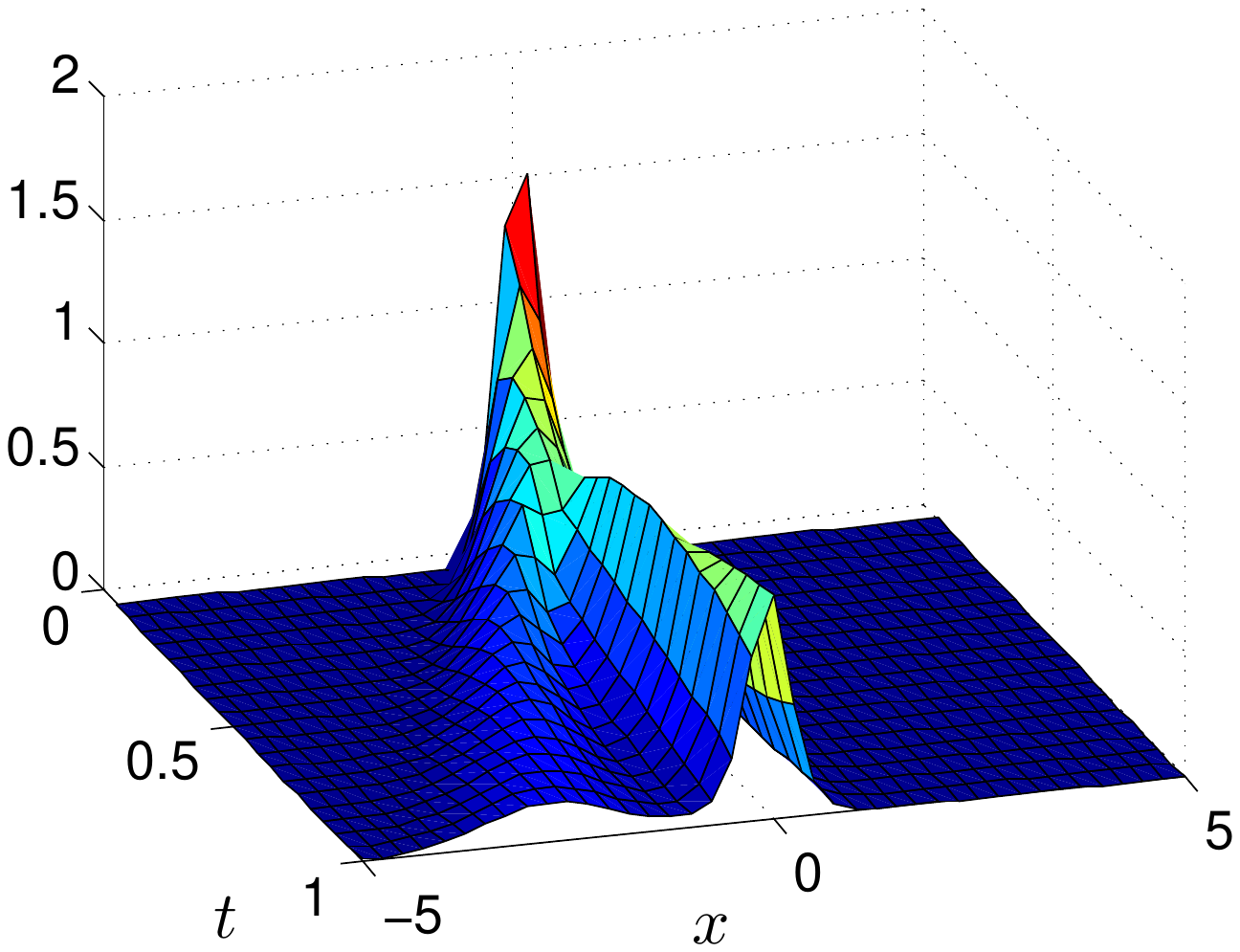}
\caption{Probability distribution}
\label{fig:distribution_2}
\end{subfigure}
\vspace{5mm}
\begin{subfigure}[c]{\linewidth}
\includegraphics[trim= 3.5cm 8cm 3.5cm 8cm, clip, scale= 0.5]{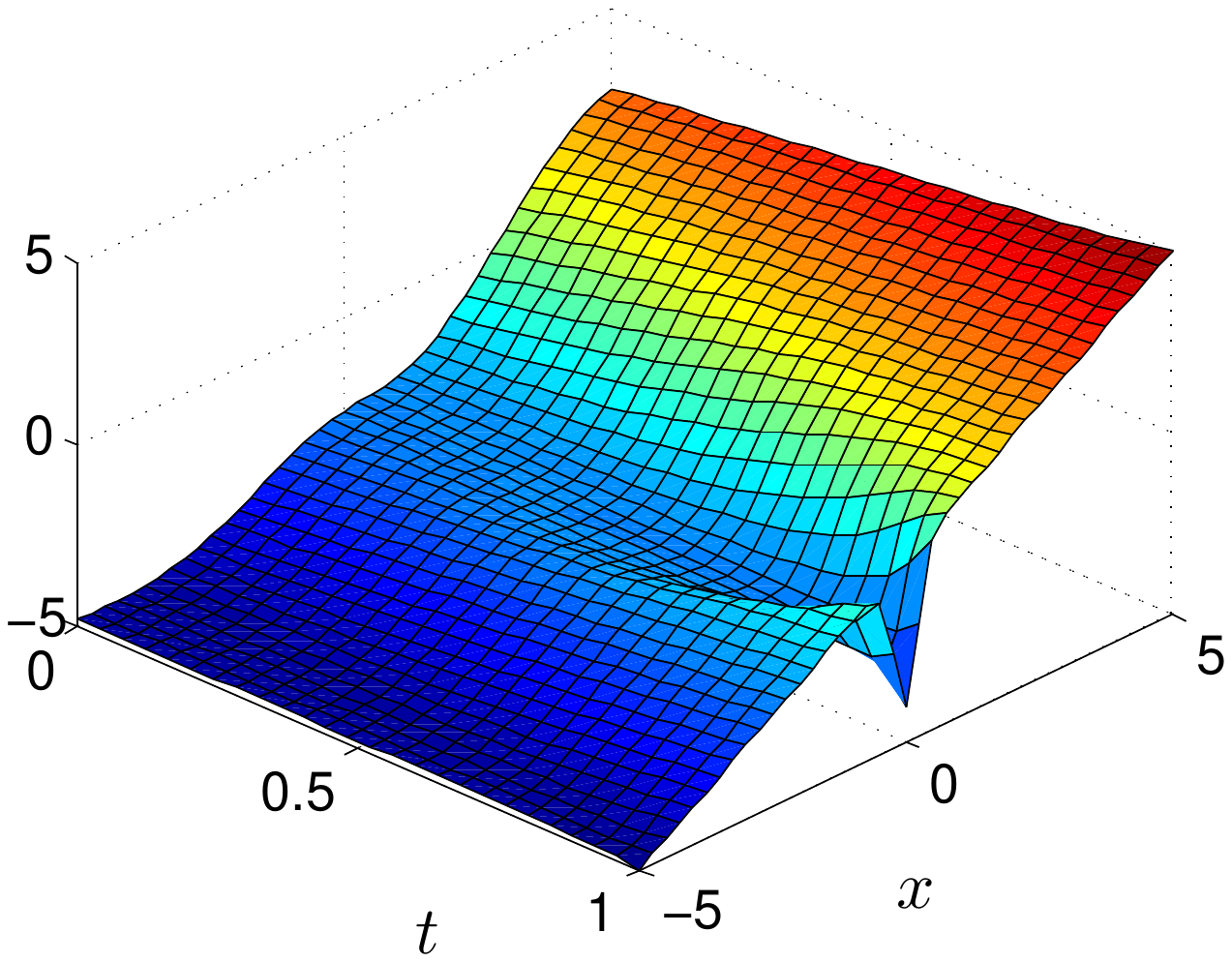}
\caption{Adjoint equation}
\label{fig:adjoint2}
\end{subfigure}
\vspace{10mm}
\end{center}
\end{minipage}
\caption{Numerical results}
\label{fig:numerical_results}
\end{figure}

As expected, the optimal control has a kind of bang-bang structure. It is constant with respect to time, equal to $-2$ for $x \geq -1.6$ and to $2$ for $x \leq -1.6$. If the same problem was solved without constraint, the optimal control would be equal to $-2$, in order to minimize the expectation of the final state. Here, the optimal control must be equal to 2 when $x$ is smaller then $-1.6$ in order to keep the variance sufficiently small and to satisfy the constraint.

\paragraph{Test case 2: expectation constraint}

For this second test case, we consider the following cost function and constraint:
\begin{equation} \label{eqTestCase2}
F(m)= \int_{\R} x \dd m(x) \quad \text{and} \quad
G(m)= -\int_{\R} e^{-x^2} \dd m(x) + \alpha,
\end{equation}
with $\alpha= 0,4$.
Note that both $F$ and $G$ are linear.
Roughly speaking, the constraint $G(m) \leq 0$ ensures that a proportion $\alpha$ of the final probability measure remains around 0.

Convergence results are given in Figure \ref{fig:cvTC2}. The value of $G(\bar{u})$ converges to 0, suggesting that the constraint is active for the undiscretized problem. Convergence of the Lagrange multiplier is observed. The variational inequality is exactly satisfied, since the derivatives of $F$ and $G$ do not depend on $m$. The value of the penalty parameter does not increase much.

\begin{figure}[htb]
\centering
\begin{tabular}{|c|c|c|c|c|c|}
\hline
Tolerance & $G(\bar{u})$ & $\phantom{|^{a^{a^a}}} \bar{\lambda} \phantom{|^{a^{a^a}}}$ & Var.\@ Ineq. & $c$ & Iterations \\ \hline
1e$-$3 & $-1.93\,$e$-2$ & $4.119$ & $0$ & $100$ & $37$ \\
1e$-$4 & $\phantom{-}1.19\,$e$-3$ & $4.024$ & 0 & 100 & 53 \\
1e$-$5 & $-8.22\,$e$-5$ & $4.026$ & 0 & 100 & 64 \\
1e$-$6 & $-8.22\,$e$-5$ & $4.026$ & 0 & 100 & 64 \\ \hline
\end{tabular}
\caption{Convergence results for the Test Case 2}
\label{fig:cvTC2}
\end{figure}

The optimal control, the probability distribution (at any time) and the adjoint are provided in Figures \ref{fig:control_2}, \ref{fig:distribution_2}, and \ref{fig:adjoint2} (page \pageref{fig:control_1}). As can be observed, the optimal control only takes the boundary values. The value $2$ is taken in a small region around $x=0$, after $t \approx 0.4$, which guarantees that a sufficiently large proportion of the distribution remains located around 0, as can be seen on the graph of the probability distribution.

\section{Conclusion}

We have proved optimality conditions for a class of constrained non-linear stochastic optimal control problems, using an appropriate concept of differentiability for the cost function and the constraints. The convexity of the closure of the reachable set of probability measures plays an essential role in the proof of these results. An augmented Lagrangian method, based on the convexity property and the optimality conditions has been proposed, demonstrating the relevance of these properties.
Good convergence results have been obtained for examples with a one-dimensional state variable.
Future work will focus on the extension of these results to more general problems, for example, for cost functions containing an integral cost depending on the current probability distribution.

\paragraph{Acknowledgements}

This study was partly supported by the ERC advanced grant 668998 (OCLOC) under the EU's
H2020 research program.

\appendix

\section{Elements on optimal transportation}

\paragraph{Wasserstein distance}

Let us recall the definition of the Wasserstein distance, denoted by $d_1$ in this article. For all $m_1$ and $m_2$ in $\mathcal{P}_1(\R^n)$,
\begin{equation} \label{eqDefWasserstein}
d_1(m_1,m_2)= \inf_{\pi \in \Pi(m_1,m_2)} \int_{\R^n \times \R^n} |y-x| \dd \pi(x,y),
\end{equation}
the set $\Pi(m_1,m_2)$ being the set of transportation mappings from $m_1$ to $m_2$ defined as:
\begin{equation*}
\Bigg\{ \pi \in \mathcal{P}(\R^{2n}) \,|\, \Big\{ \begin{array}{l} \pi(A\times \R^n)= m_1(A),\\ \pi(\R^n \times A)= m_2(A),\end{array} \, \text{for all measurable $A \subset \R^n$} \Bigg\}.
\end{equation*}

\begin{lemma} \label{lemmaConvComb}
For all $m_1$ and $m_2 \in \mathcal{P}_1(\R^n)$, for all $\theta \in [0,1]$,
\begin{equation} \label{eqConvComb}
d_1((1-\theta)m_1 + \theta m_2) \leq \theta d_1(m_1,m_2).
\end{equation}
\end{lemma}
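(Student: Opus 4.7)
The statement as written is missing an argument in $d_1$; in view of the way the lemma is invoked in the proof of Proposition \ref{propInnerLoop} (where $d_1(m_\ell, (1-\theta)m_\ell + \theta \tilde m_\ell) \leq \theta d_1(m_\ell, \tilde m_\ell)$ is used), the intended inequality is
\begin{equation*}
d_1\bigl(m_1,\, (1-\theta) m_1 + \theta m_2\bigr) \leq \theta\, d_1(m_1,m_2).
\end{equation*}
The plan is to prove this by exhibiting an explicit, near-optimal transport plan from $m_1$ to $(1-\theta) m_1 + \theta m_2$, built by glueing together the ``do-nothing'' plan with an optimal plan from $m_1$ to $m_2$.

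More precisely, fix $\varepsilon > 0$ and choose $\pi \in \Pi(m_1,m_2)$ such that $\int_{\R^n \times \R^n} |y-x|\,\dd \pi(x,y) \leq d_1(m_1,m_2) + \varepsilon$. Let $\mathrm{id}_\#^{\Delta} m_1 \in \mathcal{P}(\R^n \times \R^n)$ denote the image of $m_1$ under the diagonal embedding $x \mapsto (x,x)$, and define
\begin{equation*}
\pi_\theta := (1-\theta)\, \mathrm{id}_\#^\Delta m_1 + \theta\, \pi.
\end{equation*}
The first marginal of $\pi_\theta$ is $(1-\theta)m_1 + \theta m_1 = m_1$, and the second marginal is $(1-\theta)m_1 + \theta m_2$, so $\pi_\theta \in \Pi(m_1, (1-\theta)m_1 + \theta m_2)$. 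Plugging this plan into the Kantorovich formulation \eqref{eqDefWasserstein} gives
\begin{equation*}
d_1(m_1, (1-\theta)m_1 + \theta m_2) \leq (1-\theta) \!\!\int_{\R^n} \! |x-x| \dd m_1(x) + \theta \!\!\int_{\R^n \times \R^n}\!\! |y-x| \dd \pi(x,y) \leq \theta(d_1(m_1,m_2) + \varepsilon),
\end{equation*}
and letting $\varepsilon \to 0$ yields the claim.

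There is no real obstacle; the only minor point to verify is that $\pi_\theta$ has finite first moment (so that the integral is well defined), which follows immediately from $m_1, m_2 \in \mathcal{P}_1(\R^n)$. Alternatively, one could prove the statement using the dual representation \eqref{eqDualWasserstein}: for any $\phi \in \text{1-Lip}(\R^n)$,
\begin{equation*}
\int \phi \,\dd\bigl((1-\theta)m_1 + \theta m_2 - m_1\bigr) = \theta \int \phi \,\dd(m_2 - m_1) \leq \theta\, d_1(m_1,m_2),
\end{equation*}
and taking the supremum over $\phi$ gives the same conclusion; but the transport-plan approach is more elementary and places the lemma naturally within the optimal transportation appendix.
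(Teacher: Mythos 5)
Your proof is correct, and you rightly identify the typo in the statement (the left-hand side should read $d_1\bigl(m_1,(1-\theta)m_1+\theta m_2\bigr)$, consistent with how the lemma is used in the proof of Proposition \ref{propInnerLoop}). Your primary argument, however, takes a genuinely different route from the paper's: you work with the primal Kantorovich formulation \eqref{eqDefWasserstein}, glueing the diagonal (``do-nothing'') plan on $m_1$ with a near-optimal plan from $m_1$ to $m_2$ to produce an admissible plan $\pi_\theta \in \Pi\bigl(m_1,(1-\theta)m_1+\theta m_2\bigr)$ of cost at most $\theta\bigl(d_1(m_1,m_2)+\varepsilon\bigr)$. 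The paper instead uses the dual representation \eqref{eqDualWasserstein} in a two-line computation: for $\phi \in \text{1-Lip}(\R^n)$, the signed measure $(1-\theta)m_1+\theta m_2-m_1$ equals $\theta(m_2-m_1)$, so the integral of $\phi$ against it is bounded by $\theta\, d_1(m_1,m_2)$, and one takes the supremum over $\phi$ --- which is precisely the alternative you sketch in your closing sentence. The dual argument is shorter and avoids the $\varepsilon$-approximation of an optimal plan; your primal construction is more constructive and makes transparent \emph{which} transport achieves the bound, at the modest cost of checking marginals and finiteness of the first moment. Both are complete and correct.
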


\begin{proof}
Let $\phi \in \text{1-Lip}(\R^n)$. Then,
\begin{align*}
\int_{\R^n} \phi \dd \big( ((1-\theta)m_1 + \theta m_2)-m_1 \big)
= \theta \int_{\R^n} \phi \dd (m_2-m_1)
\leq \theta d_1(m_1,m_2).
\end{align*}
The last inequality follows from the dual representation \eqref{eqDualWasserstein}. Maximizing the left-hand side with respect to $\phi \in \text{1-Lip}(\R^n)$, we obtain inequality \eqref{eqConvComb}.
\end{proof}

\paragraph{A compactness property}

\begin{lemma} \label{lemmaCompactnessProperty}
For all $p>1$ and $R\geq 0$, the subset $\bar{B}_p(R)$ of $\mathcal{P}_1(\R^n)$ (defined in \eqref{eqDefBpR}) is compact for the $d_1$-distance.
\end{lemma}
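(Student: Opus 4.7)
The plan is to combine Prokhorov's theorem with a uniform-integrability argument that exploits the strict inequality $p>1$ to upgrade weak convergence to convergence in the Wasserstein distance $d_1$. Since $\bar{B}_p(R)$ is a metric subspace of $\mathcal{P}_1(\R^n)$, it suffices to prove sequential compactness: from any sequence $(m_k)_{k \in \mathbb{N}}$ in $\bar{B}_p(R)$ I must extract a subsequence converging in $d_1$ to some limit lying in $\bar{B}_p(R)$.

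\textbf{Step 1: Tightness and weak extraction.} By Markov's inequality, for all $M>0$ and all $k$,
\begin{equation*}
m_k(\{ x \in \R^n : |x| > M \}) \leq \frac{1}{M^p} \int_{\R^n} |x|^p \dd m_k(x) \leq \frac{R}{M^p},
\end{equation*}
which tends to $0$ uniformly in $k$ as $M\to \infty$. Hence the family $\{m_k\}_k$ is tight, and Prokhorov's theorem yields a subsequence (still denoted $m_k$) converging weakly to some $m \in \mathcal{P}(\R^n)$.

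\textbf{Step 2: The limit lies in $\bar{B}_p(R)$.} For each $M>0$ the map $x\mapsto \min(|x|^p, M)$ is bounded and continuous, so by weak convergence and monotone convergence,
\begin{equation*}
\int_{\R^n} \min(|x|^p, M) \dd m(x) = \lim_{k \to \infty} \int_{\R^n} \min(|x|^p, M) \dd m_k(x) \leq R.
\end{equation*}
Letting $M \to \infty$ and applying monotone convergence on the left gives $\int |x|^p \dd m \leq R$, so $m \in \bar{B}_p(R)$.

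\textbf{Step 3: Upgrade to $d_1$-convergence.} This is where $p>1$ is essential. It is a standard fact (e.g.\@ Villani \cite{Vil09}, Theorem 6.9) that for measures in $\mathcal{P}_1(\R^n)$,
\begin{equation*}
d_1(m_k, m) \to 0 \quad \Longleftrightarrow \quad m_k \rightharpoonup m \text{ weakly and } \int |x| \dd m_k \to \int |x| \dd m.
\end{equation*}
The first condition holds by Step 1. For the second, I would argue via uniform integrability of $|x|$: for any $M>0$,
\begin{equation*}
\int_{\{|x|>M\}} |x| \dd m_k(x) \leq \frac{1}{M^{p-1}} \int_{\R^n} |x|^p \dd m_k(x) \leq \frac{R}{M^{p-1}},
\end{equation*}
uniformly in $k$. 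Since $p>1$, this tends to $0$ as $M \to \infty$, so $\{|x|\}$ is uniformly integrable against $\{m_k\}$. Combined with weak convergence and truncation arguments identical to Step 2, this yields $\int |x| \dd m_k \to \int |x| \dd m$, and hence $d_1(m_k, m) \to 0$.

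\textbf{Main obstacle.} The only nontrivial point is Step 3; the proof of the equivalence between $d_1$-convergence and weak convergence plus first-moment convergence is classical but could be invoked directly from the optimal-transport literature rather than redone. The role of the assumption $p>1$ appears precisely here, as it is what provides the uniform integrability of $|x|$ needed to pass to the $d_1$-topology from the weaker Prokhorov-based extraction.
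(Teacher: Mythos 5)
Your proof is correct and follows essentially the same route as the paper's: tightness via the $p$-th moment bound and Prokhorov's theorem, closedness of $\bar{B}_p(R)$ under weak limits via truncation of $|x|^p$ and monotone convergence, and the upgrade to $d_1$-convergence via Villani's characterization (Theorem 6.9) together with the uniform integrability estimate $\int_{\{|x|>M\}}|x|\dd m_k \leq R/M^{p-1}$, which is exactly where the paper also uses $p>1$. No gaps.
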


\begin{proof}
We first prove that  $\bar{B}_p(R)$ is compact for the weak topology of $\mathcal{P}(\R^n)$. For all $r \geq 0$ and for all $m \in \bar{B}_p(R)$,
\begin{equation} \label{eqIneqDontJaiOublieLeNom}
R \geq \int_{\bar{B}_r^{\text{c}}} |x|^p \dd m(x) \geq r^p \int_{\bar{B}_r^{\text{c}}} 1 \dd m(x),
\end{equation}
and thus, $m(\bar{B}_r^{\text{c}}) \leq R/r^p \rightarrow 0$, meaning that $\bar{B}_p(R)$ is tight. By Prokhorov's theorem \cite[Page 43]{Vil09}, $\bar{B}_p(R)$ is therefore precompact for the weak-topology. Now, let $(m_k)_{k \in \mathbb{N}}$ be a sequence in $\bar{B}_p(R)$ weakly converging to $\bar{m} \in \mathcal{P}(\R^n)$. For all $r \geq 0$, the function $\min(|x|^p,r)$ is continuous and bounded, thus:
\begin{equation*}
\int_{\R^n} \min(|x|^p,r) \dd \bar{m}(x)= \lim_{k \to \infty} \int_{\R^n} \min(|x|^p,r) \dd m_k(x) \leq R.
\end{equation*}
We obtain, using the monotone convergence theorem:
\begin{equation*}
\int_{\R^n} |x|^p \dd \bar{m}(x)=
\lim_{r \to \infty} \int_{\R^n} \min(|x|^p, r) \dd \bar{m}(x) \leq R,
\end{equation*}
thus $\bar{m} \in \bar{B}_p(R)$. Therefore, $\bar{B}_p(R)$ is weakly closed, and thus weakly compact.

Finally, we need to prove that any weakly converging sequence $(m_k)_{k \in \mathbb{N}}$ in $\bar{B}_p(R)$ to some $\bar{m} \in \bar{B}_p(R)$ also converges for the $d_1$-distance. By \cite[Definition 6.8/Theorem 6.9]{Vil09}, it suffices to prove that
\begin{equation} \label{eqToBeProvedForCompacity}
\int_{\R^n} |x| \dd m_k(x) \underset{k \to \infty}{\longrightarrow} \int_{\R^n} |x| \dd \bar{m}(x).
\end{equation}
Observe that for all $r\geq 0$, for all $m \in \bar{B}_p(R)$, similarly to \eqref{eqIneqDontJaiOublieLeNom}, we find that
\begin{equation*}
\Big|\int_{\R^n} |x|-\min(|x|,r) \dd m(x)\Big|
\leq \int_{\bar{B}^{\text{c}}_r} |x| \dd m(x) \leq \frac{R}{r^{p-1}}.
\end{equation*}
Therefore, for all $r \geq 0$,
\begin{equation*}
\limsup_{k \to \infty} \Big|\int_{\R^n} |x| \dd (m_k(x)-\bar{m}(x)) \Big|
\leq \underbrace{\Big( \lim_{k \to \infty} \int_{\R^n} \min( |x|,r ) \dd (m_k(x) - \bar{m}(x)) \Big)}_{=0} + \frac{2R}{r^{p-1}}.
\end{equation*}
We obtain \eqref{eqToBeProvedForCompacity}, making $r$ tend to $+\infty$.
 \end{proof}

\paragraph{A continuity property}

We prove in the following lemma the continuity of linear mappings for the $d_1$-distance on $\bar{B}_p(R)$ under a growth condition.

\begin{lemma} \label{lemmaContinuityDominatedCost}
Let $p>1$, $\phi:\R^n \rightarrow \R$ be dominated by $|x|^p$ (in the sense of \eqref{eqContPropR}).
Then, for all $R \geq 0$, the following mapping: $m \in \bar{B}_p(R) \mapsto \int_{\R^n} \phi(x) \dd m(x)$
is continuous for the $d_1$-distance.
\end{lemma}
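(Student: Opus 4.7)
The plan is to prove sequential continuity: take a sequence $(m_k)_{k \in \mathbb{N}}$ in $\bar{B}_p(R)$ with $d_1(m_k,\bar{m}) \to 0$ for some $\bar{m} \in \bar{B}_p(R)$ and show that $\int \phi \dd m_k \to \int \phi \dd \bar{m}$. The strategy is to split $\phi$ into a compactly supported continuous part (handled via weak convergence) and a tail part (handled via the growth hypothesis). Here I will assume $\phi$ is continuous, as is the case in every application of the lemma in the paper; otherwise some form of regularity compatible with weak convergence of measures is implicitly needed.

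First I would recall a fact already used in the proof of Lemma \ref{lemmaCompactnessProperty}: convergence for $d_1$ on $\mathcal{P}_1(\R^n)$ is equivalent to weak convergence together with convergence of the first moment (Villani, Theorem 6.9). In particular, $m_k \rightharpoonup \bar{m}$ narrowly, so for every bounded continuous test function $f$ one has $\int f \dd m_k \to \int f \dd \bar{m}$.

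Next I would introduce, for each $r > 0$, a Lipschitz cutoff $\chi_r \colon \R^n \to [0,1]$ with $\chi_r \equiv 1$ on $\bar{B}_r$ and $\chi_r \equiv 0$ outside $\bar{B}_{2r}$, and decompose
\begin{equation*}
\int_{\R^n} \phi \dd m = \int_{\R^n} \phi \chi_r \dd m + \int_{\R^n} \phi (1-\chi_r) \dd m.
\end{equation*}
For the tail term, fix $\varepsilon > 0$ and use the domination hypothesis to pick $r$ so large that $|\phi(x)| \leq \frac{\varepsilon}{3(R+1)} |x|^p$ for $|x| \geq r$. Then, uniformly in $m \in \bar{B}_p(R)$,
\begin{equation*}
\Big| \int_{\R^n} \phi (1-\chi_r) \dd m \Big|
\leq \int_{\bar{B}_r^{\text{c}}} |\phi| \dd m
\leq \frac{\varepsilon}{3(R+1)} \int_{\R^n} |x|^p \dd m
\leq \frac{\varepsilon}{3}.
\end{equation*}
Applying this to both $m = m_k$ and $m = \bar{m}$ controls the tail contribution by $2\varepsilon/3$ regardless of $k$.

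For the compact part, the function $\phi \chi_r$ is continuous and supported inside $\bar{B}_{2r}$, hence bounded continuous. Weak convergence therefore yields $\int \phi \chi_r \dd m_k \to \int \phi \chi_r \dd \bar{m}$, so for $k$ large enough the difference is at most $\varepsilon/3$. Combining with the tail bound via the triangle inequality gives $|\int \phi \dd m_k - \int \phi \dd \bar{m}| \leq \varepsilon$ for large $k$, which concludes the argument. The main conceptual obstacle is that $\phi$ need not be Lipschitz or even bounded, so the dual representation \eqref{eqDualWasserstein} of $d_1$ cannot be applied directly; the tail-and-cutoff decomposition, combined with the fact that $d_1$-convergence entails narrow convergence, is precisely what circumvents this difficulty and exploits the uniform $p$-th moment bound on $\bar{B}_p(R)$.
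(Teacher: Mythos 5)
Your proof is correct and follows essentially the same route as the paper's: both replace $\phi$ by a bounded continuous truncation that agrees with $\phi$ on a large ball (the paper composes $\phi$ with the radial projection onto $\bar{B}_r$ rather than multiplying by a cutoff $\chi_r$), bound the discrepancy uniformly over $\bar{B}_p(R)$ using the domination hypothesis and the $p$-th moment bound, and invoke the fact that $d_1$-convergence implies weak convergence to handle the truncated part. Your remark that continuity of $\phi$ must be assumed is apt, since the paper's own proof uses it implicitly when asserting that $\hat{\phi}$ is continuous.
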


\begin{proof}
Let $(m_k)_{k \in \mathbb{N}}$ be a sequence in $\bar{B}_p(R)$ converging to $\bar{m} \in \bar{B}_p(R)$ for the $d_1$-distance.
Let $\varepsilon>0$ and let $r$ be such that \eqref{eqContPropR} holds.
We define $\hat{\phi}: x \in \R^n \mapsto \hat{\phi}(x)= \phi(P(x))$, where $P$ is the orthogonal projection on $\bar{B}_r$.
For all $x \in \R^n$, if $x \in \bar{B}_r$, then $\hat{\phi}(x)= \phi(x)$ and if $x \in \bar{B}_r^\text{c}$, then $|\hat{\phi}(x)|= |\phi(rx/|x|)| \leq \varepsilon r^{p} \leq \varepsilon |x|^p$.
Thus, for all $m \in \bar{B}_p(R)$,
\begin{equation*}
\Big| \int_{\R^n} \hat{\phi}-\phi \dd m \Big|
\leq \Big| \int_{B_r} \hat{\phi}-\phi \dd m \Big|
+ \int_{B^\text{c}_r} |\phi| \dd m + \int_{B^\text{c}_r} |\hat{\phi}| \dd m
\leq  2\varepsilon R.
\end{equation*}
By \cite[Definition 6.8/Theorem 6.9]{Vil09}, the convergence for the $d_1$-distance implies the weak convergence, thus, since $\hat{\phi}$ is continuous and bounded, we obtain:
\begin{align*}
\limsup_{k \to \infty} \Big|\int_{\R^n} \!\phi \dd(m_k - \bar{m}) \Big|
\leq \ & \underbrace{\lim_{k \to \infty} \Big| \int_{\R^n} \!\hat{\phi} \dd(m_k - \bar{m}) \Big|}_{= 0}
 + \limsup_{k \to \infty}  \Big| \int_{\R^n} \!(\hat{\phi}-\phi) \dd(m_k - \bar{m}) \Big| \\
\leq \ & 4\varepsilon R.
\end{align*}
The result follows when $\varepsilon$ tends to 0.
\end{proof}


\begin{thebibliography}{10}

\bibitem{AL15}
Y.~Achdou and M.~Lauri{\`e}re.
\newblock On the system of partial differential equations arising in mean field
  type control.
\newblock {\em Discrete and Continuous Dynamical Systems}, 35(9):3879--3900,
  2015.

\bibitem{ACFK16}
G.~Albi, Y.-P. Choi, M.~Fornasier, and D.~Kalise.
\newblock Mean field control hierarchy.
\newblock {\em Applied Mathematics {\&} Optimization}, 76(1):93--135, 2017.

\bibitem{AD11}
D.~Andersson and B.~Djehiche.
\newblock A maximum principle for {SDE}s of mean-field type.
\newblock {\em Applied Mathematics {\&} Optimization}, 63(3):341--356, 2011.

\bibitem{BFY13}
A.~Bensoussan, J.~Frehse, and P.~Yam.
\newblock {\em Mean field games and mean field type control theory}.
\newblock Springer Briefs in Mathematics. Springer, New York, 2013.

\bibitem{BPS13}
J.~F. Bonnans, L.~Pfeiffer, and O.~S. Serea.
\newblock Sensitivity analysis for relaxed optimal control problems with
  final-state constraints.
\newblock {\em Nonlinear Anal.}, 89:55--80, 2013.

\bibitem{BS00}
J.~F. Bonnans and A.~Shapiro.
\newblock {\em Perturbation analysis of optimization problems}.
\newblock Springer Series in Operations Research. Springer-Verlag, New York,
  2000.

\bibitem{BS12}
J.~F. Bonnans and F.~J. Silva.
\newblock First and second order necessary conditions for stochastic optimal
  control problems.
\newblock {\em Applied Mathematics {\&} Optimization}, 65(3):403--439, Jun
  2012.

\bibitem{BET09}
B.~Bouchard, R.~Elie, and N.~Touzi.
\newblock Stochastic target problems with controlled loss.
\newblock {\em SIAM J. Control Optim.}, 48(5):3123--3150, 2009.

\bibitem{BDL11}
R.~Buckdahn, B.~Djehiche, and J.~Li.
\newblock A general stochastic maximum principle for {SDE}s of mean-field type.
\newblock {\em Appl. Math. Optim.}, 64(2):197--216, 2011.

\bibitem{CM87}
P.~H. Calamai and J.~J. Mor{\'e}.
\newblock Projected gradient methods for linearly constrained problems.
\newblock {\em Mathematical Programming}, 39(1):93--116, 1987.

\bibitem{CF95}
F.~Camilli and M.~Falcone.
\newblock An approximation scheme for the optimal control of diffusion
  processes.
\newblock {\em RAIRO Mod\'el. Math. Anal. Num\'er.}, 29(1):97--122, 1995.

\bibitem{Car12}
P.~Cardaliaguet.
\newblock Notes on {M}ean {F}ield {G}ames, 2012.

\bibitem{CD15}
R.~Carmona and F.~Delarue.
\newblock Forward-backward stochastic differential equations and controlled
  {M}c{K}ean-{V}lasov dynamics.
\newblock {\em Ann. Probab.}, 43(5):2647--2700, 2015.

\bibitem{CGT92}
A.~R. Conn, N.~I.~M. Gould, and Ph.~L. Toint.
\newblock {\em L{ANCELOT}}, volume~17 of {\em Springer Series in Computational
  Mathematics}.
\newblock Springer-Verlag, Berlin, 1992.
\newblock A Fortran package for large-scale nonlinear optimization (release A).

\bibitem{FG17}
A.~Fleig and R.~Guglielmi.
\newblock Optimal control of the {F}okker--{P}lanck equation with
  space-dependent controls.
\newblock {\em Journal of Optimization Theory and Applications},
  174(2):408--427, 2017.

\bibitem{FS06}
W.~H. Fleming and H.~M. Soner.
\newblock {\em Controlled {M}arkov processes and viscosity solutions},
  volume~25 of {\em Stochastic Modelling and Applied Probability}.
\newblock Springer, second edition, 2006.

\bibitem{Kry80}
N.~V. Krylov.
\newblock {\em Controlled diffusion processes}, volume~14 of {\em Applications
  of Mathematics}.
\newblock Springer-Verlag, New York-Berlin, 1980.

\bibitem{LP14}
M.~Lauri{\`e}re and O.~Pironneau.
\newblock Dynamic programming for mean-field type control.
\newblock {\em Journal of Optimization Theory and Applications},
  169(3):902--924, 2016.

\bibitem{MY17}
C.~W. Miller and I.~Yang.
\newblock Optimal control of conditional value-at-risk in continuous time.
\newblock {\em SIAM J. Control Optim.}, 55(2):856--884, 2017.

\bibitem{NW06}
J.~Nocedal and S.~J. Wright.
\newblock {\em Numerical optimization}.
\newblock Springer Series in Operations Research and Financial Engineering.
  Springer, New York, second edition, 2006.

\bibitem{Oks03}
B.~{\O}ksendal.
\newblock {\em Stochastic Differential Equations: An Introduction with
  Applications}.
\newblock Hochschultext / Universitext. Springer, 2003.

\bibitem{PG17}
J.~L. Pedersen and G.~Peskir.
\newblock Optimal mean-variance portfolio selection.
\newblock {\em Math. Financ. Econ.}, 11(2):137--160, 2017.

\bibitem{P90}
S.~G. Peng.
\newblock A general stochastic maximum principle for optimal control problems.
\newblock {\em SIAM J. Control Optim.}, 28(4):966--979, 1990.

\bibitem{Pfe15a}
L.~Pfeiffer.
\newblock Optimality conditions for mean-field type optimal control problems.
\newblock {\em SFB-report 2015-015}, 2015.

\bibitem{Pfe15b}
L.~Pfeiffer.
\newblock Numerical methods for mean-field type optimal control problems.
\newblock {\em Pure and Applied Functional Analysis}, 1(4):629--655, 2016.

\bibitem{Pfe16}
L.~Pfeiffer.
\newblock Risk-averse {M}erton's portfolio problem.
\newblock {\em IFAC-PapersOnLine}, 49(8):266 -- 271, 2016.

\bibitem{Pfe15}
L.~Pfeiffer.
\newblock Two approaches to stochastic optimal control problems with a final
  time expectation constraint.
\newblock {\em Applied Mathematics {\&} Optimization}, To appear.

\bibitem{Pha09}
H.~Pham.
\newblock {\em Continuous-time {S}tochastic {C}ontrol and {O}ptimization with
  {F}inancial {A}pplications}, volume~61 of {\em Stochastic Modelling and
  Applied Probability}.
\newblock Springer, 2009.

\bibitem{PW16}
H.~Pham and X.~Wei.
\newblock Dynamic programming for optimal control of stochastic
  {M}c{K}ean--{V}lasov dynamics.
\newblock {\em SIAM Journal on Control and Optimization}, 55(2):1069--1101,
  2017.

\bibitem{PW15}
H.~{Pham} and X.~{Wei}.
\newblock {Bellman equation and viscosity solutions for mean-field stochastic
  control problem}.
\newblock {\em ESAIM-COCV}, to appear.

\bibitem{SDR14}
A.~Shapiro, D.~Dentcheva, and A.~Ruszczy{\'n}ski.
\newblock {\em Lectures on stochastic programming}.
\newblock MOS-SIAM Series on Optimization. Society for Industrial and Applied
  Mathematics (SIAM), Philadelphia, PA; Mathematical Optimization Society,
  Philadelphia, PA, second edition, 2014.

\bibitem{TT13}
X.~Tan and N.~Touzi.
\newblock Optimal transportation under controlled stochastic dynamics.
\newblock {\em Ann. Probab.}, 41(5):3201--3240, 2013.

\bibitem{Tou00}
N.~Touzi.
\newblock Direct characterization of the value of super-replication under
  stochastic volatility and portfolio constraints.
\newblock {\em Stochastic Process. Appl.}, 88(2):305--328, 2000.

\bibitem{Vil09}
C.~Villani.
\newblock {\em Optimal transport. Old and new}, volume 338 of {\em Grundlehren
  der Mathematischen Wissenschaften [Fundamental Principles of Mathematical
  Sciences]}.
\newblock Springer, Berlin, 2009.

\bibitem{YZ99}
J.~Yong and X.~Y. Zhou.
\newblock {\em Stochastic controls: Hamiltonian systems and HJB equations},
  volume~43 of {\em Applications of Mathematics (New York)}.
\newblock Springer-Verlag, New York, 1999.

\end{thebibliography}
\end{document}